\title{Dynamics on the Morse Boundary}
\author{Qing Liu}
\numberwithin{equation}{section}
\numberwithin{figure}{section}
\theoremstyle{plain}
\newtheorem{thm}{Theorem}[section]
\newtheorem{lem}[thm]{Lemma}
\newtheorem{cor}[thm]{Corollary}
\newtheorem{prop}[thm]{Proposition}
\theoremstyle{definition}
\newtheorem{defn}[thm]{Definition}
\newtheorem{eg}[thm]{Example}
\theoremstyle{remark}
\newtheorem{rmk}[thm]{Remark}
\newcommand{\bbR}{{\mathbb{R}}}
\newcommand{\bbZ}{{\mathbb{Z}}}
\newcommand{\bbN}{{\mathbb{N}}}
\tikzset{node distance=1.5cm, auto}
\begin{document}
\maketitle

\begin{abstract}
Let $X$ be a proper geodesic metric space and let $G$ be a group of isometries of $X$ which acts geometrically. Cordes constructed the Morse boundary of $X$ which generalizes the contracting boundary for CAT(0) spaces and the visual boundary for hyperbolic spaces. We characterize Morse elements in $G$ by their fixed points on the Morse boundary $\partial_MX$. The dynamics on the Morse boundary is very similar to that of a $\delta$-hyperbolic space. In particular, we show that the action of $G$ on $\partial_MX$ is minimal if $G$ is not virtually cyclic. We also get a uniform convergence result on the Morse boundary which gives us a weak north-south dynamics for a Morse isometry. This generalizes the work of Murray in the case of the contracting boundary of a CAT(0) space.

\end{abstract}

\section{Introduction}

In the study of hyperbolic spaces and hyperbolic groups, boundaries play a critical role. The visual boundary of a hyperbolic space, is  defined to be equivalence classes of geodesic rays, where two geodesic rays are equivalent if their Hausdorff distance is finite. This boundary is a quasi-isometry invariant. In particular, for a hyperbolic group, its boundary is well-defined.

In a CAT(0) space, we can define the visual boundary. But it is not always a quasi-isometry invariant. Charney and Sultan in \cite{CS2014} constructed a quasi-isometry invariant boundary for any complete CAT(0) space using
contracting rays. This boundary is called the contracting boundary.

A key property of geodesics in a hyperbolic space is the Morse property which guarantees that quasi-geodesics stay close to geodesics. Many non-hyperbolic spaces $X$ also contain some Morse geodesics and we can use these Morse geodesic rays to study hyperbolic-like behavior in $X$. Cordes in \cite{Cordes} constructed the Morse boundary for any proper geodesic space containing a Morse ray. The Morse boundary is homeomorphic  to the contracting boundary and the visual boundary in the cases of proper CAT(0) spaces and hyperbolic spaces, respectively. This boundary is a quasi-isometry invariant, so it is well-defined for a finitely generated group.

As we known, the visual boundary is very useful to study the structure of hyperbolic groups. The topology and dynamics on the boundary can give information about the geometry of the original space and its isometries. 

In the case of a CAT(0) space, Murray \cite{Murray} studied the topological dynamics of actions on the contracting boundary and he obtained many basic dynamical properties. Since the Morse boundary is a generalization of the contracting boundary, it is natural to ask if similar results hold. This is our goal in the current paper.

In this paper, we introduce and study the notion of a Morse isometry of a geodesic space $X$.  It is a generalization of a hyperbolic isometry in a hyperbolic space and a rank-one isometry in a CAT(0) space. Briefly a Morse isometry is an isometry $g$ which the orbit map is a quasi-isometric embedding of $\langle g \rangle$ into $X$ and its image is Morse in $X$. In particular, if we consider a finitely generated group and its action on the Cayley graph, a Morse isometry is also called a Morse element. We give a characterization of Morse elements in terms of their fixed points on the Morse boundary. The notation $Fix_{\partial_M{X}}(g)$ represents fixed points of $g$ on the Morse boundary. 

\newtheorem*{M in G}{\bf{Theorem \ref{M in G}}}
\begin{M in G}

 Let $G$ be a group acting geometrically on a proper geodesic space $X$. Let $g\in G$ be an infinite order element. Then the following are equivalent:
\begin{enumerate}
\item 
 $Fix_{\partial_MX}(g)$ is nonempty.

\item  
For some $x_0\in X$ (hence for any $x_0\in X$), there exists a Morse gauge $N_0$ depending only on $x_0$ and  $g$ such that the geodesic $[x_0, g^{k}(x_0)]$ is $N_0$-Morse for any $k\in \bbZ$.

\item
Let $\eta =[x_0, g(x_0)]$. The bi-infinite path $\eta_{-\infty}^{+\infty}=\bigcup_{k\in \bbZ} g^{k}(\eta)$ is a Morse quasi-geodesic. In particular, the element $g$ is Morse.

\item
 $Fix_{\partial_MX}(g)$ has two distinct points.
\end{enumerate}
\end{M in G}

We also show that dynamics of a Morse isometry $g$ are very simple. For example, considering the action of $g$ on $X\cup \partial_MX$, it contains two distinct fixed points on $\partial_MX$, we say $g^+$ and $g^-$. 
And for any point $p\in X \cup \partial_MX$ other than $\{g^-, g^+\}$, the sequence $\{g^{n}(p)\}$ converges to $g^+$ and $\{g^{-n}(p)\}$ converges to $g^{-}$. So $g^+$ and $g^-$ are attractor and repeller points. 

Actually, we prove a weak North-South dynamics for a Morse isometry.

\newtheorem*{weak N-S}{\bf{Corollary \ref{weak N-S}}}
\begin{weak N-S}

Let $X$ be a proper geodesic space and $x_0$ be a basepoint. 
Let $g$ be a Morse isometry of $X$.
Given any open neighborhood $U$ of $g^{+}$ in $\partial_MX_{x_0}$ and any compact set $K\subset \partial_MX_{x_0}$ with $g^-\notin K$. 
There exists an integer $n$ such that $g^{n}(K)\subset U$.
\end{weak N-S}

More generally, we can prove the following theorem. It says that the action on the Morse boundary behaves like a convergence group action.

\newtheorem*{con2 in M}{\bf{Theorem \ref{con2 in M}}}
\begin{con2 in M}

Let $X$ be a proper geodesic space and $x_0$ be a basepoint. 
Let $\{g_n\}$ be a sequence of isometries of $X$.
Assume that $g_n(x_0)\to p_1$ in $\partial_MX_{x_0}$ and $g_n^{-1}(x_0)\to p_2$ in $\partial_MX_{x_0}$. 
Given any open neighborhood $U$ of $p_1$ in $\partial_MX_{x_0}$ and any compact set $K\subset \partial_MX_{x_0}$ with $p_2\notin K$. 
There exists an integer $k$ such that for all $n>k$ we have $g_{n}(K)\subset U$ after passing to a subsequence.

\end{con2 in M}

For a finitely generated group G, we call it $non$-$elementary$ $Morse$ if it has nonempty Morse boundary and is not virtually cyclic. 
We also prove another topological dynamics theorem that the action of a non-elementary Morse group $G$ on its Morse boundary is minimal.

\newtheorem*{minimal}{\bf Theorem \ref{min}}
\begin{minimal}

If a finitely generated group $G$ is non-elementary Morse, then the action of $G$ on $\partial_MG$ is minimal, that is for any $p\in \partial_MG$ the orbit $Gp$ is dense in $\partial_MG$.

\end{minimal}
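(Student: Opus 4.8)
The plan is to reduce minimality to two ingredients: (1) the set of attracting fixed points of Morse elements is dense in $\partial_M G$, and (2) every such attracting fixed point lies in the closure of every orbit. For (1), I would argue that since $G$ is non-elementary Morse, it contains at least one Morse element $g$ (by the characterization in Theorem \ref{M in G}, any point of $\partial_M G$ is a limit point of a Morse geodesic ray, and one can promote this to a Morse element using the geometric action — this is essentially the content that makes $G$ non-elementary Morse genuinely nontrivial). Moreover $G$ contains \emph{independent} Morse elements, i.e. two Morse elements $g, h$ with $\{g^+, g^-\} \cap \{h^+, h^-\} = \emptyset$; otherwise all Morse elements would share a common fixed-point pair and one checks (using the weak North–South dynamics of Corollary \ref{weak N-S}) that $G$ would be virtually cyclic, contradicting the hypothesis. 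Given a point $p \in \partial_M G$ and an open neighborhood $U$ of $p$, I want to produce a Morse element whose attracting fixed point lies in $U$; this is done by a standard ping-pong / conjugation argument: conjugating $g$ by suitable elements $\gamma_n$ whose orbits approach $p$, the attracting fixed points $\gamma_n g^{\pm 1}\gamma_n^{-1}$-points can be pushed into $U$, using Theorem \ref{con2 in M} to control where neighborhoods get mapped.

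For (2): fix any $p \in \partial_M G$ and let $q \in \partial_M G$ be arbitrary; I must show $q \in \overline{Gp}$. Pick a Morse element $g$ with attracting fixed point $g^+$ and repelling fixed point $g^-$. If $p \neq g^-$, then by the weak North–South dynamics (Corollary \ref{weak N-S}), applied with $K = \{p\}$ (a compact set avoiding $g^-$) and $U$ any neighborhood of $g^+$, we get $g^n(p) \to g^+$, so $g^+ \in \overline{Gp}$. If $p = g^-$, first move $p$ off $g^-$ by an element of $G$: since $G$ is not virtually cyclic, its action does not fix the pair $\{g^+, g^-\}$, so some $\gamma \in G$ has $\gamma(g^-) \notin \{g^+, g^-\}$ — wait, more carefully, I only need $\gamma p = \gamma g^- \neq g^-$, which holds for some $\gamma$ since otherwise $g^-$ is a global fixed point and, combined with $g^+$, one again forces virtual cyclicity via the dynamics. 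Then $g^n(\gamma p) \to g^+$, giving $g^+ \in \overline{Gp}$ in all cases. Combining with (1): given the arbitrary target $q$ and a basic open neighborhood $U$ of $q$, choose by (1) a Morse element $h$ with $h^+ \in U$; by the argument just given $h^+ \in \overline{Gp}$, so $\overline{Gp}$ meets $U$. Since $U$ was an arbitrary neighborhood of an arbitrary point $q$, $\overline{Gp} = \partial_M G$.

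The main obstacle I anticipate is the interface between the topology of $\partial_M G$ and the dynamical statements. The Morse boundary is only a direct limit of compact sets $\partial_M^N G$ (not itself compact or even first-countable in general), so I must be careful that the compact set $K$ appearing in Corollary \ref{weak N-S} and Theorem \ref{con2 in M} can be taken to be a single point (fine, points are compact) and that "open neighborhood of $q$" can be taken within a single stratum $\partial_M^N G$; the topology on $\partial_M G$ is defined so that a set is open iff its intersection with each stratum is open, so a neighborhood basis at $q$ can indeed be chosen stratum by stratum. A secondary technical point is establishing the existence of independent Morse elements and the dichotomy "either $G$ is virtually cyclic or the $G$-action has no finite orbit on $\partial_M G$ of size $\leq 2$"; I would prove the contrapositive, showing that a $G$-invariant pair $\{g^+, g^-\}$ forces, via the quasi-action on the Morse axis $\eta_{-\infty}^{+\infty}$ from Theorem \ref{M in G}(3), a homomorphism $G \to \mathrm{Isom}(\mathbb{R})$ with kernel acting trivially near the axis, whence $G$ is virtually cyclic. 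These are the places where the argument needs genuine care rather than routine bookkeeping.
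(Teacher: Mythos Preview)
Your argument has a genuine gap at its foundation: you assume that a non-elementary Morse group contains a Morse element, and this is false. The paper itself flags this in the Remark following Corollary~\ref{dense 2}, citing Fink's construction of an infinite torsion group with nonempty Morse boundary. Such a group is non-elementary Morse (infinite torsion groups are not virtually cyclic) yet has no infinite-order elements at all, hence no Morse elements. Theorem~\ref{M in G} only characterizes Morse elements among infinite-order elements; it does not manufacture them from Morse rays. Consequently both halves of your plan collapse: there need be no rational points to be dense, and there need be no Morse element whose weak North--South dynamics you can exploit to pull $g^{+}$ into $\overline{Gp}$. Note also that in the paper the density of rational points (Corollary~\ref{dense 2}) is deduced \emph{from} minimality and carries the extra hypothesis that $G$ contain a Morse element; you have the logical dependence reversed.

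The paper's proof circumvents this entirely. Instead of a single Morse isometry, it uses cocompactness to produce, for any target ray $\alpha$, a sequence $g_n\in G$ with $g_n(x_0)$ tracking $\alpha(n)$; after passing to a subsequence one has $g_n(x_0)\to\alpha(\infty)$ and $g_n^{-1}(x_0)\to\beta(\infty)$ for some Morse $\beta$, and the Key Lemma (Lemma~\ref{key 1}) then gives $g_n(q)\to\alpha(\infty)$ whenever $q\neq\beta(\infty)$. This yields Proposition~\ref{dense 1}: every orbit is dense unless the point is globally fixed. The globally-fixed case is then ruled out via Theorem~\ref{cpt boundary}, which forces $G$ to be hyperbolic and hence virtually cyclic. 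The essential point is that the Key Lemma needs only a \emph{sequence} of isometries with Morse orbit behavior, not a Morse element, and cocompactness supplies such sequences for free.
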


In our paper, the proofs of Theorem \ref{con2 in M} and Theorem \ref{min} are based on the following key lemma and its variants.

\newtheorem*{key 1}{\bf Lemma \ref{key 1}}
\begin{key 1}[Key Lemma]
Let $X$ be a proper geodesic space and $x_0$ be a basepoint. 
Let $\{g_n\}$ be a sequence of isometries of $X$.
Assume that $g_n(x_0)\to p_1$ $\in$  $\partial_MX_{x_0}$ and $g_n^{-1}(x_0)\to p_2$ $\in$ $\partial_MX_{x_0}$. Then for any point $q\in \partial_MX_{x_0}$ with $q\neq p_2$,  the sequence $g_n(q)$ converges to $p_1\in \partial_MX_{x_0}$.
\end{key 1}

Here is the outline of this paper. 
In section 2, we review the definitions of slim and thin triangle conditions. We give the construction and topology of the Morse boundary which is the main object in the paper.
In section 3. we list and prove some basic properties of Morse triangles and Morse geodesics which will be used repeatedly in the later sections. 
In section 4, we introduce the notion of a Morse isometry of a proper geodesic space. Using its fixed points in the Morse boundary, we give a characterization of the Morse element.
In section 5, in order to study dynamics, we need some preparations on the topology of the Morse boundary. 
In section 6, after proving the key lemma, we finish the proofs of all the above topological dynamics on the Morse boundary.

{\bf ACKNOWLEDGEMENTS.}
I would like to thank my advisor Ruth Charney for her enthusiasm and for encouraging me to write this paper.

\section{Preliminaries}

Let $(X, d)$ be a geodesic metric space. The notation $[x, y]$ represents a geodesic between two points $x, y\in X$. 
The metric space $X$ is called $proper$ if any closed ball in $X$ is compact.

\subsection{Slim Triangles and Thin Triangles}
In the later sections we will see that triangles with two $N$-Morse sides are hyperbolic-like triangles. Here we give a quick review about the slim and thin triangles conditions. All of these can be found in \cite[Chapter III.H]{bridson} or \cite{ghys1990groupes}.

\begin{defn}[Slim triangles]
Let $\delta$ be some non-negative constant.
A geodesic triangle in $X$ is called $\delta$-$slim$ if each of its sides is contained in the $\delta$-neighborhood of the union of the other two sides.
\end{defn}

\begin{defn}[Thin triangles]
Let $\delta$ be some non-negative constant. A triangle $\triangle(x, y, z)$ with sides $[x, y]$, $[y, z]$ and $[x, z]$ is called $\delta$-$thin$ if for any points $p\in [x, y]$ and $q\in [x, z]$ that satisfy $d(x, p)=d(x, q)\le \frac{1}{2}(d(x, y)+d(x, z)-d(y, z))$ we have $d(p, q)\le \delta$.
\end{defn}
 In a hyperbolic space, all triangles are uniformly slim and uniformly thin.

\subsection{Morse geodesics and the Morse boundary}

\begin{defn}[Hausdorff distance]
The Hausdorff distance $d_{\mathcal{H}}(A_1, A_2)$ between two subsets $A_1$ and $A_2$ is defined by $$\inf\{r\mid A_{1}\subset \mathcal{N}_{r}(A_2), A_{2}\subset \mathcal{N}_{r}(A_1)\},$$  where $\mathcal{N}_{r}(A_{i})$ is the $r$-$neighborhood$ of $A_{i}$. 
\end{defn}

\begin{defn}[Quasi-geodesics]
A map $f: (X, d_{X})\rightarrow (Y,d_{Y})$ between metric spaces is a $(\lambda, \epsilon)$-$quasi$-$isometric$ $embedding$, where $\lambda\ge 1, \epsilon>0$, if for any $x_1, x_2\in X$
$$\lambda^{-1}d_{X}(x_1, x_2)-\epsilon\le d_{Y}(f(x_1),f(x_2))\le \lambda d_{X}(x_1,x_2)+\epsilon.$$
If $X$ is an interval of $\bbR$, then the map $f$ is called a $(\lambda, \epsilon)$-$quasi$-$geodesic$. For convenience, we use the image of $f$ to describe the quasi-geodesic.  
\end{defn}

\begin{defn}[Morse (quasi)-geodesics]
Let $N$ be a function $[1, \infty)\times [0, \infty)\rightarrow [0, \infty)$. We say a (quasi)-geodesic $\gamma$ in a metric space is $N$-$Morse$, if for any $(\lambda, \epsilon)$-quasi-geodesic $\alpha$ with endpoints on $\gamma$, we have $\alpha\subset \mathcal{N}_{N(\lambda, \epsilon)}{(\gamma)}$. The function $N(\lambda,\epsilon)$ is called a $Morse$ $gauge$.
\end{defn}

This definition has its roots in a paper of Morse \cite{morse1924fundamental}. The Morse lemma says that every quasi-geodesic in a hyperbolic space is Morse. Morse geodesics in a proper geodesic space are similar to geodesics in a hyperbolic space. We would like to study hyperbolic-like behavior in more general spaces. 
The following lemma was proved in \cite[Corollary 2.5]{Cordes}. It says that the Morse geodesics are "hyperbolic directions" of the space.

\begin{lem}[Equivalent Geodesics, \cite{Cordes}]\label{E G}
Let $X$ be a geodesic metric space. Let $\alpha$ and $\beta$ be geodesic rays based at a point.
Suppose $\alpha$ is $N$-Morse and $d_{\mathcal{H}}(\alpha, \beta)$ is finite. Then there exists a Morse gauge $N_{1}$ and a constant $C_N$ such that $\beta$ is $N_{1}$-Morse and $d(\alpha(t), \beta(t))< C_N$ for all $t$, where $N_{1}$ and $C_N$ depend only on $N$.

\end{lem}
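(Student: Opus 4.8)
The plan is to isolate a single fellow-traveling estimate, reduce the Morse-gauge statement to it, and then attack it using the Morse property of $\alpha$. Throughout write $p=\alpha(0)=\beta(0)$, $M_t:=d(\alpha(t),\beta(t))$, $r_t:=d(\beta(t),\alpha)$, and $D:=d_{\mathcal{H}}(\alpha,\beta)<\infty$. A crude preliminary bound comes for free: for each $t$ choose $\beta(s)$ with $d(\alpha(t),\beta(s))\le D$; since both rays are unit speed from $p$, $|s-t|=|d(p,\beta(s))-d(p,\alpha(t))|\le d(\alpha(t),\beta(s))\le D$, so $M_t\le 2D$. Thus some fellow-traveling constant exists; the entire point of the lemma is to replace it by a constant $C_N$ depending only on $N$.

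First I would observe that it is enough to produce such a $C_N$ with $M_t\le C_N$ for all $t$. Granting this, $d_{\mathcal{H}}(\alpha,\beta)\le C_N$, and for any $(\lambda,\epsilon)$-quasi-geodesic $\eta$ with endpoints $\beta(a),\beta(b)$ the enlargement $\eta':=[\alpha(a),\beta(a)]\cup\eta\cup[\beta(b),\alpha(b)]$ is a $(\lambda,\epsilon')$-quasi-geodesic with $\epsilon'$ depending only on $\lambda,\epsilon,C_N$ and with both endpoints on $\alpha$; hence $\eta\subseteq\eta'\subseteq\mathcal{N}_{N(\lambda,\epsilon')}(\alpha)\subseteq\mathcal{N}_{N(\lambda,\epsilon')+C_N}(\beta)$, which exhibits a Morse gauge $N_1$ for $\beta$ depending only on $N$. (One may assume $N$ is non-decreasing, replacing it by the minimal Morse gauge of $\alpha$, which is non-decreasing and bounded by $N$.)

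The workhorse for the fellow-traveling bound is that a quasi-geodesic with both endpoints on $\alpha$ is controlled by $\alpha$. Letting $\pi$ denote nearest-point projection onto $\alpha$, the concatenation $\beta|_{[0,t]}\cup[\beta(t),\pi\beta(t)]$ is a $(1,2r_t)$-quasi-geodesic from $p$ to a point of $\alpha$ (its length exceeds the distance between its endpoints by at most $2r_t$), so by the Morse property $\beta|_{[0,t]}\subseteq\mathcal{N}_{N(1,2r_t)}(\alpha)$; in particular $r_s\le N(1,2r_t)$ for every $s\le t$. If $\liminf_{t\to\infty}r_t=0$ this closes the argument cleanly: choosing $t$ arbitrarily large with $r_t$ arbitrarily small forces $r_s\le N(1,1)$ for all $s$, whence $M_s\le 2r_s\le 2N(1,1)=:C_N$ (the inequality $M_s\le 2r_s$ again from comparing distances to $p$).

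The remaining, and I expect principal, obstacle is the case $\liminf_{t\to\infty}r_t>0$, i.e.\ when $\beta$ eventually runs uniformly far from $\alpha$; there the estimate above only yields the self-referential inequality $r_t\le N(1,2r_t)$, a fixed-point statement rather than an absolute bound. Resolving it amounts to the structural fact that an $N$-Morse ray cannot support a wide asymptotic "parallel strip" without having a large gauge. The natural device is to replace the cheap jump $[\beta(t),\pi\beta(t)]$ by a symmetric detour $[\alpha(\max\{0,t-M_t\}),\beta(t)]\cup[\beta(t),\alpha(t+M_t)]$, whose two legs have length comparable to $M_t$ and whose endpoints are $2M_t$ apart, so that its \emph{multiplicative} quasi-geodesic constant is universally bounded; if this detour were a $(\lambda_0,0)$-quasi-geodesic we would get $r_t\le N(\lambda_0,0)$ outright. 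The crux is that its two legs may fold back sharply at $\beta(t)$, which a priori degrades the detour to a $(1,cM_t)$-quasi-geodesic and reintroduces exactly the $M_t$-dependence we are trying to remove. I would handle this by iteration: a fold of depth $k$ at $\beta(t)$ produces a turning point still at distance $\gtrsim r_t-k$ from $\alpha$ yet joined to two points of $\alpha$ by geodesics whose combined length exceeds their $\alpha$-distance only boundedly, so cutting the corner through that turning point strictly decreases the excess; repeating and combining with the Morse property of $\alpha$ should terminate in a genuine bound $M_t\le C_N$ with $C_N$ depending only on $N$, at which point the reduction of the second paragraph completes the lemma.
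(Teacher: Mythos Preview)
The paper does not prove this lemma; it is quoted as Corollary~2.5 of \cite{Cordes} with no argument given, so there is no in-paper proof to compare against. What follows is an assessment of your argument on its own terms.

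Your reduction in the second paragraph is correct: once $M_t\le C_N$ for all $t$, the gauge $N_1$ for $\beta$ follows by the endpoint-extension trick you describe. The third paragraph is also correct and yields the inequality $r_s\le N(1,2r_t)$ for all $s\le t$, which cleanly disposes of the case $\liminf_{t\to\infty}r_t=0$.

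The genuine gap is precisely where you flag it. The self-referential bound $r_t\le N(1,2r_t)$ gives nothing, since Morse gauges are arbitrary non-negative functions. Your symmetric detour $[\alpha(t-M_t),\beta(t)]\cup[\beta(t),\alpha(t+M_t)]$ has the right global proportions (total length $\le 4M_t$, endpoint distance $2M_t$), but it is \emph{not} a $(\lambda_0,0)$-quasi-geodesic for any universal $\lambda_0$: near the corner $\beta(t)$ the two legs can come arbitrarily close (think of a tree, where both legs share an initial segment along the branch carrying $\beta(t)$), so for short subpaths straddling the corner the ratio of arc-length to distance is unbounded. The additive error really is of order $M_t$, and this is not an artifact of the construction. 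Your iteration sketch (``cutting the corner strictly decreases the excess \ldots\ should terminate'') is not an argument: you have not said which quantity decreases, by what definite margin, or why the limit depends on $N$ rather than on $D$. As written this is a hope, not a proof.

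This gap is the substantive content of the lemma. The device in \cite{Cordes} is different in kind: one shows that when a geodesic segment has endpoints on (or near) an $N$-Morse geodesic $\alpha$, its \emph{interior} fellow-travels $\alpha$ with a constant depending only on $N$, and the endpoint slack contaminates only a bounded terminal portion of the segment. Applying this to $\beta|_{[0,T]}$ closed up to $\alpha$, and letting $T\to\infty$, gives the $D$-independent bound at every fixed $t$. Your inequality $r_s\le N(1,2r_t)$, by contrast, feeds the far-endpoint slack $r_t$ directly into the bound at every interior point $s$, which is exactly why no iteration of it can eliminate the dependence on $D$.
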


Now we give a review about the construction and topology of the Morse boundary. The reader can find more details and complete proofs in \cite{Cordes}.
\begin{defn}[The Morse boundary]
Let $X$ be a proper geodesic metric space and choose a basepoint $x_0\in X$. We say two Morse geodesic rays are $equivalent$ if their Hausdorff distance is finite. As a set, the $Morse$ $boundary$ $\partial_MX_{x_0}$ of $X$, is defined to be equivalence classes of all Morse geodesic rays with the basepoint $x_0$. In order to define the topology of the Morse boundary, choose a Morse gauge $N$. Define $$\partial_M^{N}{X_{x_0}}=\{[\alpha]\mid \mbox{there exists an } N\mbox{-Morse geodesic ray } \beta\in [\alpha] \mbox{ with basepoint } x_0 \}$$ with the compact-open topology.

Another equivalent way to define this topology on $\partial_M^NX_{x_0}$ is using a system of neighborhoods at a point in $\partial_M^NX_{x_0}$.
Let $\alpha$ be an $N$-Morse geodesic ray with $\alpha(0)=x_0$. 
For each positive integer $n$, let $V^{N}_n(\alpha)$ be the sets of $[\beta]\in \partial_M^NX_{x_0}$ with basepoint $x_0$ and $d(\alpha(t), \beta(t)) < C_N$ for all t< n, where $C_N$ is the constant from Lemma \ref{E G}. 
Then $\{V^N_n(\alpha)\mid n\in \bbN\}$ is a fundamental system of neighborhoods of $[\alpha]$ in $\partial_M^NX_{x_0}$.

Let $\mathcal{M}$ be the set of all Morse gauges. We put a natural partial ordering on $\mathcal{M}$ and define 
$$\partial_MX_{x_0}=\underset{\mathcal{M}}{\varinjlim} \partial_M^NX_{x_0}$$ with the induced direct limit topology, i.e., a set $U$ is open in $\partial_MX_{x_0}$ if and only if $U\cap\partial_M^NX_{x_0}$ is open for every Morse gauge $N$. 
\end{defn}

This topology has some nice properties. For a fixed Morse gauge $N$, the space $\partial_M^NX_{x_0}$ is compact and behaves like the boundary of a hyperbolic space.
With the direct limit topology, the Morse boundary is basepoint independent. So sometimes we will denote the Morse boundary as $\partial_MX$ without the basepoint.
We say a Morse boundary

\begin{thm}[Main Theorem in \cite{Cordes}]\label{MT of C}
Given a proper geodesic space $X$, with the direct limit topology, the Morse boundary $\partial_MX=\underset{\mathcal{M}}{\varinjlim} \partial_M^NX_{x_0}$ is a quasi-isometry invariant and is a visibility space, that is any two distinct points in $\partial_MX$ can be joined by a Morse bi-infinite geodesic. If $X$ is hyperbolic, the Morse boundary is homeomorphic to the visual boundary. If $X$ is $CAT(0)$, the Morse boundary is  homeomorphic to the contracting boundary.
\end{thm}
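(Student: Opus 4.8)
The plan is to treat the four assertions separately, since they rest on rather different ideas, while exploiting throughout one structural fact: a geodesic triangle with two $N$-Morse sides behaves like a triangle in a $\delta$-hyperbolic space, being $\delta_N$-slim and $\delta_N$-thin with $\delta_N$ depending only on $N$ (this is the phenomenon flagged in Section~2 and developed in Section~3).

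\textbf{Quasi-isometry invariance.} Let $f\colon X\to Y$ be a $(\lambda,\epsilon)$-quasi-isometry with quasi-inverse $\bar f$. First I would show that if $\gamma$ is an $N$-Morse geodesic ray in $X$ then $f\circ\gamma$ is a $(\lambda,\epsilon)$-quasi-geodesic in $Y$ that is $N'$-Morse with $N'$ depending only on $N,\lambda,\epsilon$; by properness one then straightens $f\circ\gamma$ to an $N''$-Morse geodesic ray $\gamma'$ from $f(x_0)$ with $d_{\mathcal H}(f\circ\gamma,\gamma')$ bounded in terms of $N'$ (a relative of Lemma~\ref{E G}). This yields a well-defined map $\partial_M^N X_{x_0}\to\partial_M^{N''}Y_{f(x_0)}$, and using the neighborhood bases $\{V^N_n(\alpha)\}$ one checks it is continuous: uniform fellow-travelling of $\alpha,\beta$ on $[0,n]$ forces uniform fellow-travelling of $\gamma',\beta'$ on a comparable initial segment, the loss being controlled by $\lambda,\epsilon$ and the gauges. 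Assembling over all $N$ gives a continuous map $\partial_M X\to\partial_M Y$, since a map out of a direct limit is continuous exactly when its restriction to each $\partial_M^N X_{x_0}$ is. Running the same construction for $\bar f$ and noting that $\bar f\circ f$ moves each ray a bounded Hausdorff distance gives an inverse up to the equivalence relation defining the boundary, so the map is a homeomorphism; basepoint independence is the special case $f=\mathrm{id}$, $Y=X$.

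\textbf{Visibility.} Given distinct $[\alpha],[\beta]\in\partial_M X$, represent them by $N$-Morse rays $\alpha,\beta$ from $x_0$ inside a common $\partial_M^N$. Since $[\alpha]\neq[\beta]$ we have $d_{\mathcal H}(\alpha,\beta)=\infty$, so $d(\alpha(n),\beta(n))\to\infty$. Put $\sigma_n=[\alpha(n),\beta(n)]$. In $\triangle(x_0,\alpha(n),\beta(n))$ two sides are $N$-Morse, so $\sigma_n$ lies in the $\delta_N$-neighborhood of $\alpha\cup\beta$ and, by slimness, passes within $\delta_N$ of $x_0$; reparametrizing so the point nearest $x_0$ sits at parameter $0$, the $\sigma_n$ are uniformly proper, and the Arzel\`a--Ascoli theorem extracts a subsequence converging uniformly on compacta to a bi-infinite geodesic $\gamma$. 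A geodesic segment staying $\delta_N$-close to the $N$-Morse rays $\alpha,\beta$ is itself Morse with gauge depending only on $N$ (another Lemma~\ref{E G}-type argument, splitting $\sigma_n$ according to which ray it shadows), and this passes to the limit, so $\gamma$ is Morse. Finally, the sub-ray of $\sigma_n$ toward $\alpha(n)$ stays boundedly close to $\alpha$ on longer and longer initial segments — being close to $\alpha\cup\beta$ but eventually far from every fixed point of $\beta$ — so $[\gamma(+\infty)]=[\alpha]$, and symmetrically $[\gamma(-\infty)]=[\beta]$.

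\textbf{The hyperbolic and CAT(0) cases.} If $X$ is $\delta$-hyperbolic the Morse lemma supplies a uniform gauge $N_\delta$ making every geodesic $N_\delta$-Morse, so $\partial_M X=\partial_M^{N_\delta}X_{x_0}$ and the direct limit is trivial; the compact-open topology on equivalence classes of geodesic rays is precisely the usual topology on the visual boundary of a proper hyperbolic space (uniform convergence of geodesic representatives on compacta), which gives the homeomorphism. If $X$ is CAT(0) I would invoke the known equivalence, for rays in a CAT(0) space, between being Morse and being contracting, with bounds transforming in a controlled way, so that the direct-limit system $\{\partial_M^N X_{x_0}\}$ and the system defining the contracting boundary of Charney--Sultan \cite{CS2014} are mutually cofinal and carry the same fellow-travelling topology on matching pieces; passing to direct limits identifies the two boundaries.

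\textbf{Main obstacle.} I expect the real work to be twofold. In the quasi-isometry argument, one must make precise — uniformly in $N$ — how Morse gauges and fellow-travelling constants degrade under a quasi-isometry, its quasi-inverse, and the straightening of quasi-geodesics to geodesics, so that everything is natural enough to assemble over the direct limit. In the visibility argument, the delicate points are the non-degeneracy and the asymptotics of $\gamma$: one genuinely needs $[\alpha]\neq[\beta]$, hence the divergence of $\alpha,\beta$, to know the limit is bi-infinite, and one needs constants depending only on $N$ controlling how much of $\sigma_n$ shadows $\alpha$ versus $\beta$ so that the two ends of $\gamma$ hit the prescribed boundary points. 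Both ultimately reduce to the slim/thin behavior of triangles with two Morse sides, which is the technical heart underlying every part.
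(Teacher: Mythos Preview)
The paper does not prove this statement. Theorem~\ref{MT of C} is quoted in the Preliminaries section as the main theorem of \cite{Cordes} and is used as a black box; no proof, sketch, or indication of method appears anywhere in the paper. So there is nothing to compare your proposal against here.

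For what it is worth, your outline is broadly the right shape for how the result is established in \cite{Cordes} and \cite{CS2014}: push Morse gauges through a quasi-isometry and assemble over the direct limit; for visibility, run Arzel\`a--Ascoli on segments $[\alpha(n),\beta(n)]$ inside slim triangles; for the hyperbolic case, collapse the direct system via the Morse lemma; for the CAT(0) case, invoke the Morse/contracting equivalence. One small wrinkle in your visibility sketch: the claim that $\sigma_n$ ``passes within $\delta_N$ of $x_0$'' does not follow from slimness alone (slimness applied to a side through $x_0$ is vacuous at $x_0$). You need a connectedness/transition argument on $\sigma_n$ --- find a point simultaneously $\delta_N$-close to both $[x_0,\alpha(n)]$ and $[x_0,\beta(n)]$ --- together with the fact that $\alpha,\beta$ are inequivalent to force that transition point to be boundedly close to $x_0$; alternatively use thinness (Lemma~\ref{thin}) rather than slimness. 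This is routine but should be made explicit.
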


\begin{rmk}
Given a geodesic space $X$, Cordes and Hume in  \cite{cordes2017stability} constructed the $metric$ $Morse$ $boundary$ of $X$. It is a collection of boundaries and in the case of a proper geodesic space the direct limit of these boundaries is exactly the Morse boundary $\partial_MX$.
\end{rmk}

The following two corollaries of the Arzelà-Ascoli theorem \cite[Theorem 47.1]{munkres2000topology} are useful to us.
\begin{cor}\label{aa1}
Let $X$ be a proper geodesic metric space and $x_0\in X$. Let $\alpha_n:[0, a_n]\rightarrow X$ be a sequence  of geodesics such that $\alpha_n(0)=x_0$ and $a_n\to \infty$. Then the sequence $(\alpha_n)$ has a subsequence that converges uniformly on compact sets to a geodesic $\alpha:[0, \infty)\to X$.
\end{cor}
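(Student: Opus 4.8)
The plan is to apply the Arzel\`a--Ascoli theorem on each bounded interval $[0,T]$ and then diagonalize over $T=1,2,3,\dots$. First I would note that a geodesic is an isometric embedding, so each $\alpha_n$ is $1$-Lipschitz and, for any fixed $T>0$ and any $n$ with $a_n\ge T$, the restriction $\alpha_n|_{[0,T]}$ takes values in the closed ball $\overline{B}(x_0,T)$, which is compact since $X$ is proper. Hence the family $\{\alpha_n|_{[0,T]}\mid a_n\ge T\}$ is equicontinuous (indeed uniformly $1$-Lipschitz) and pointwise relatively compact, so by \cite[Theorem 47.1]{munkres2000topology} it has a subsequence converging uniformly on $[0,T]$.

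Next I would run the standard diagonal argument. Using $a_n\to\infty$, discard the finitely many terms with $a_n<1$ and extract a subsequence converging uniformly on $[0,1]$; from it extract a further subsequence converging uniformly on $[0,2]$; and so on. The resulting diagonal subsequence $(\alpha_{n_k})$ converges uniformly on every interval $[0,T]$, hence uniformly on every compact subset of $[0,\infty)$, to a limit map $\alpha\colon[0,\infty)\to X$, with $\alpha(0)=\lim_k\alpha_{n_k}(0)=x_0$.

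Finally I would verify that $\alpha$ is a geodesic ray. Fix $s,t\in[0,\infty)$ and choose $k$ large enough that $a_{n_k}\ge\max(s,t)$; then $d(\alpha_{n_k}(s),\alpha_{n_k}(t))=|s-t|$ because $\alpha_{n_k}$ is a geodesic, and letting $k\to\infty$, continuity of the metric gives $d(\alpha(s),\alpha(t))=|s-t|$. Thus $\alpha$ is an isometric embedding of $[0,\infty)$, i.e.\ a geodesic ray based at $x_0$, as required.

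I do not expect a genuine obstacle here: the argument is the textbook proof of Arzel\`a--Ascoli compactness for geodesics in a proper space. The only point requiring care is the bookkeeping around domains, since $\alpha_n$ is defined only on $[0,a_n]$ rather than on all of $[0,\infty)$; every extraction step must be taken over those indices with $a_n$ large enough, which is precisely what the hypothesis $a_n\to\infty$ guarantees.
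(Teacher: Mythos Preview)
Your proposal is correct and is precisely the standard Arzel\`a--Ascoli plus diagonal argument that the paper has in mind; indeed, the paper does not spell out a proof at all but simply records the statement as an immediate corollary of \cite[Theorem 47.1]{munkres2000topology}. Your care about restricting to indices with $a_n\ge T$ is exactly the small bookkeeping point one needs, and nothing more is required.
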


\begin{cor}\label{aa2}
Let $X$ be a proper geodesic metric space. Let $\alpha_n:[a_n, b_n]\to X$ be a sequence of geodesics such that $a_n\to -\infty, b_n\to\infty$, and the set $\{\alpha_n(0)\}$ has bounded diameter. Then the sequence $(\alpha_n)$ has a subsequence that converges uniformly on compact sets to a geodesic $\alpha:(-\infty,\infty)\to X$.
\end{cor}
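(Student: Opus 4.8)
The plan is to verify the two hypotheses of the Arzel\`a--Ascoli theorem---equicontinuity and pointwise relative compactness---and then extract a limit by a diagonal argument over an exhaustion of $\bbR$ by compact intervals. First I would record that each $\alpha_n$, being a geodesic, is an isometric embedding, so $d(\alpha_n(s),\alpha_n(t))=|s-t|$ whenever $s,t\in[a_n,b_n]$; in particular every $\alpha_n$ is $1$-Lipschitz, which yields uniform equicontinuity. Next I would fix a point $x_0\in X$ and a radius $R$ with $\{\alpha_n(0)\}\subset \overline{B}(x_0,R)$, which is possible since this set has bounded diameter. For any $t\in\bbR$ and any $n$ with $t\in[a_n,b_n]$ we then have $d(\alpha_n(t),x_0)\le d(\alpha_n(t),\alpha_n(0))+d(\alpha_n(0),x_0)\le |t|+R$, so $\alpha_n(t)$ lies in the closed ball $\overline{B}(x_0,|t|+R)$. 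Because $X$ is proper, this ball is compact, giving the required pointwise relative compactness.

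Since $a_n\to-\infty$ and $b_n\to+\infty$, for each integer $M\ge 1$ there is an index beyond which $[-M,M]\subset[a_n,b_n]$, so all but finitely many $\alpha_n$ restrict to maps $[-M,M]\to\overline{B}(x_0,M+R)$. Applying Arzel\`a--Ascoli on the compact interval $[-M,M]$, with compact target $\overline{B}(x_0,M+R)$, yields a subsequence converging uniformly on $[-M,M]$. I would then run this argument for $M=1,2,3,\dots$, at each stage passing to a further subsequence, and take the diagonal subsequence $(\alpha_{n_k})$. This diagonal sequence converges uniformly on every $[-M,M]$, hence uniformly on compact sets, to a well-defined map $\alpha:\bbR\to X$.

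Finally I would check that $\alpha$ is a geodesic. For fixed $s,t\in\bbR$, once $n_k$ is large enough we have $s,t\in[a_{n_k},b_{n_k}]$ and $d(\alpha_{n_k}(s),\alpha_{n_k}(t))=|s-t|$; letting $k\to\infty$ and using pointwise convergence together with continuity of the metric gives $d(\alpha(s),\alpha(t))=|s-t|$. Thus $\alpha$ is an isometric embedding of $\bbR$, i.e.\ a bi-infinite geodesic. The only genuinely delicate bookkeeping is organizing the diagonal extraction across the growing intervals while keeping the bound $|t|+R$ valid uniformly; everything else is a routine consequence of properness and the $1$-Lipschitz property. This parallels the proof of Corollary \ref{aa1}, the only new feature being the two-sided exhaustion of $\bbR$.
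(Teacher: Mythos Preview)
Your proof is correct and is precisely the standard Arzel\`a--Ascoli argument the paper has in mind: the paper does not spell out a proof at all but simply records this corollary as a consequence of the Arzel\`a--Ascoli theorem \cite[Theorem 47.1]{munkres2000topology}, and your write-up (equicontinuity from the $1$-Lipschitz property, pointwise relative compactness from properness and the bound $|t|+R$, diagonal extraction over $[-M,M]$, and passage to the limit in the isometry equation) is exactly how one unpacks that citation. There is nothing to add.
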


\section{Basic properties of Morse geodesics}

In this section we will review and prove some basic properties about Morse geodesics which will be used in the later sections.

%
%

We now prove that all segments of a Morse quasi-geodesic are uniformly Morse.
\begin{lem}\label{seg M}
Let $X$ be a geodesic metric space.
Let $\alpha :I\to X$ be an $N$-Morse $(\lambda, \epsilon)$-quasi-geodesic where $I$ is an interval of $\bbR$. Then for  any interval $I'\subset I$, the $(\lambda, \epsilon)$-quasi-geodesic $\alpha'=\alpha \mid_{I'}$ is $N'$-Morse where $N'$ depends only on $\lambda,\epsilon$ and $N$.

\end{lem}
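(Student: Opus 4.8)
The plan is to show that any quasi-geodesic $\beta$ with endpoints on the restriction $\alpha' = \alpha|_{I'}$ stays uniformly close to $\alpha'$, where the constants depend only on $\lambda, \epsilon, N$. The key difficulty is that $\alpha'$ is a \emph{proper subsegment} of $\alpha$: we know $\beta$ stays close to all of $\alpha$ (since $\beta$ has endpoints on $\alpha$, which is $N$-Morse), but we must rule out the possibility that $\beta$ wanders off along the part of $\alpha$ outside $I'$ before coming back. So the main obstacle is controlling how far along $\alpha$ a point of $\beta$ can travel, and then converting ``close to $\alpha$'' into ``close to $\alpha'$''.

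First I would reduce to the case where $\beta$ is a continuous, rectifiable (``tame'') quasi-geodesic with comparable constants, using \cite[Ch.~III.H, Lemma 1.11]{bridson}, so that subpaths of $\beta$ have controlled length. Write $\alpha' : I' \to X$ with endpoints $\alpha'(a) = p$, $\alpha'(b) = q$, and let $\beta$ be a $(\lambda,\epsilon)$-quasi-geodesic from $p$ to $q$. Since $\beta$ is in particular a quasi-geodesic with endpoints on the $N$-Morse quasi-geodesic $\alpha$, it lies in the $N(\lambda,\epsilon)$-neighborhood of $\alpha$; say each $\beta(t)$ is within $N(\lambda,\epsilon)$ of some $\alpha(s(t))$. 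The point is to bound the parameter values $s(t)$ into a controlled enlargement of $I'$.

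The crux is the following: because $\alpha$ is itself an $N$-Morse $(\lambda,\epsilon)$-quasi-geodesic, restricting it to the sub-interval of $I$ between $\alpha^{-1}(\text{nearest point to }p)$ and $\alpha^{-1}(\text{nearest point to }q)$ — which is within bounded Hausdorff distance of $I'$ — and concatenating with the short geodesics $[p,\alpha(\cdot)]$, $[q,\alpha(\cdot)]$ of length $\le N(\lambda,\epsilon)$, produces a quasi-geodesic $\gamma$ from $p$ to $q$ whose constants depend only on $\lambda, \epsilon, N$ (here I use Lemma \ref{E G}-type estimates, or a direct concatenation-of-quasi-geodesics argument as in \cite[Ch.~III.H]{bridson}). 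Since $\gamma$ has endpoints $p, q$ on $\alpha$, it lies in the $N(\lambda',\epsilon')$-neighborhood of $\alpha$, and conversely $\alpha'$ lies in a bounded neighborhood of $\gamma$; more usefully, $\gamma$ and $\alpha'$ track each other within a constant $D = D(\lambda,\epsilon,N)$. Now apply $N$-Morseness of $\alpha$ once more, to the quasi-geodesic $\beta$ with endpoints $p,q$, but measured against $\gamma$: combining $\beta \subset \mathcal{N}_{N(\lambda,\epsilon)}(\alpha)$ with the fact that the portion of $\alpha$ ``spanned'' by a quasi-geodesic between $p$ and $q$ is forced (by a standard length/quasi-geodesic estimate) to stay within bounded parameter-distance of $I'$, we conclude $\beta \subset \mathcal{N}_{D'}(\alpha')$ with $D' = D'(\lambda,\epsilon,N)$. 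Setting $N'(\lambda,\epsilon) := D'(\lambda,\epsilon,N)$ (and taking a sup over the relevant constants to get a genuine Morse gauge) finishes the proof.

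I expect the genuinely delicate step to be the parameter-confinement claim: that a $(\lambda,\epsilon)$-quasi-geodesic $\beta$ from $p$ to $q$ cannot, while staying in $\mathcal{N}_{N(\lambda,\epsilon)}(\alpha)$, project to parameter values $s(t)$ far outside $I'$. This is where $N$-Morseness of $\alpha$ is used essentially — it makes $\alpha$ behave like a geodesic in a hyperbolic space, so that ``escaping along $\alpha$ past $q$ and coming back'' would force $\beta$ to have length (hence parameter length) much larger than $\lambda\, d(p,q) + \epsilon \ge \lambda\,(\text{length of }\alpha' \text{ up to } (\lambda,\epsilon))$, a contradiction. Once that is pinned down, the rest is bookkeeping with the concatenation estimates and replacing the resulting bound by an honest Morse gauge depending only on $\lambda$, $\epsilon$, and $N$.
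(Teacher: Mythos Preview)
Your proposal correctly identifies the crux --- the ``parameter-confinement'' claim --- but the argument you give for it does not work, and this is a genuine gap, not just missing bookkeeping.

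First, the auxiliary quasi-geodesic $\gamma$ you build is vacuous here: since $p=\alpha(a)$ and $q=\alpha(b)$ already lie on $\alpha$, the ``nearest points'' are $p$ and $q$ themselves and $\gamma$ is just $\alpha'$. So that paragraph contributes nothing. More importantly, your length argument for confinement fails quantitatively. A tame $(\lambda_0,\epsilon_0)$-quasi-geodesic $\beta$ from $p$ to $q$ has length at most $k_1\,d(p,q)+k_2$, but this bound \emph{scales with $d(p,q)$}. If $\beta(t)$ lies within $N(\lambda_0,\epsilon_0)$ of $\alpha(s)$ with $s>b$, the length constraint only yields $s-b \lesssim (k_1\lambda^2-1)(b-a)$, which grows with $|I'|$. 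So nothing prevents an excursion past $q$ of size comparable to $d(p,q)$, and you get no uniform bound on $d(\beta(t),\alpha')$. The sentence ``escaping \ldots would force $\beta$ to have length much larger than $\lambda\,d(p,q)+\epsilon$'' is simply false: the allowed length is of order $\lambda_0^2\,d(p,q)$, which leaves plenty of room.

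The missing idea, which the paper's proof supplies, is a \emph{continuity/transition-point} argument after reducing to the case where $I'$ and $I''=I\setminus I'$ share exactly one endpoint (apply this twice for the general case). For any maximal sub-arc of $\beta$ outside $\mathcal{N}_{N(\lambda_0,\epsilon_0)}(\alpha')$, continuity of the tamed $\beta$ produces moments $c_1<c_2$ at which $\beta(c_i)$ is simultaneously within $N(\lambda_0,\epsilon_0)$ of a point $\alpha'(r'_i)\in\alpha'$ and a point $\alpha''(r''_i)\in\alpha''$. Then $d(\alpha'(r'_i),\alpha''(r''_i))\le 2N(\lambda_0,\epsilon_0)$, and since $\alpha$ is a $(\lambda,\epsilon)$-quasi-geodesic and $I'\cap I''$ is a single parameter, both $r'_i$ and $r''_i$ are forced into a bounded window around that common endpoint --- with bound depending only on $N,\lambda,\epsilon$, \emph{not} on $|I'|$. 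Hence $d(\beta(c_1),\beta(c_2))$ is uniformly bounded, and tameness bounds the length of the excursion $\beta|_{[c_1,c_2]}$. That is what replaces your length argument and makes the Morse gauge $N'$ depend only on $N,\lambda,\epsilon$.
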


\begin{proof}

\begin{figure}[!ht]
\labellist

\pinlabel $\alpha$ at 200 50
\pinlabel $\alpha'$ at 25 100
\pinlabel $\alpha''$ at 335 100

\pinlabel $\beta(a)$ at 50 80
\pinlabel $\beta(b)$ at 173 80
\pinlabel $\beta(c)$ at 320 185
\pinlabel $\beta(a')$ at 277 190
\pinlabel $\beta(b')$ at 320 150
\pinlabel $\alpha(p)$ at 222 80

\pinlabel $\mathcal{N}_{N(\lambda_0, \epsilon_0)}(\alpha')$ at 40 50

{\color {red}
\pinlabel $\beta_1(c_1)$ at 175 185
\pinlabel $\alpha'(r_1')$ at 147 80
\pinlabel $\alpha''(r_1'')$ at 247 80
}
{\color{blue}
\pinlabel $\beta_2(c_2)$ at 266 151
\pinlabel $\alpha'(r_2')$ at 197 80
\pinlabel $\alpha''(r_2'')$ at 278 80
}

\endlabellist
\includegraphics[width=6in]{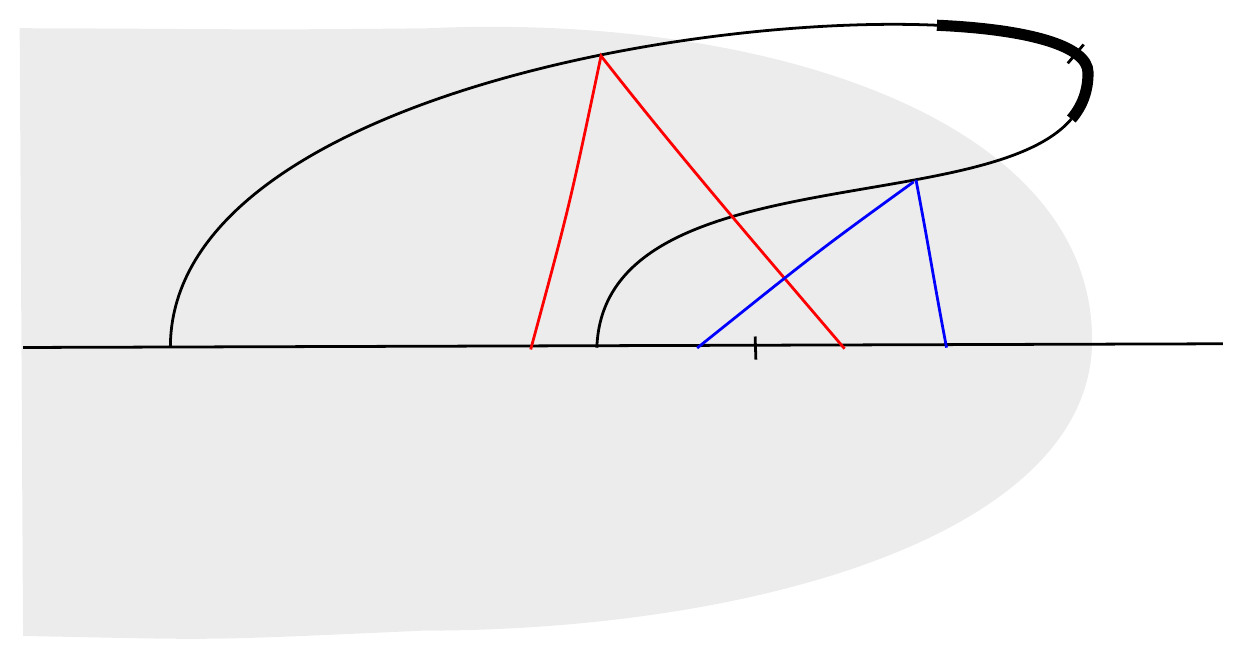}
\caption{Picture in Lemma \ref{seg M}. } 
\label{py}
\end{figure}

It suffices to prove this in the special case that $I$ and $I'$ have one endpoint (finite point or infinite point) in common, we can apply the special case twice for the general case.

Let $\beta: [a, b] \to X$ be a $(\lambda_0, \epsilon_0)$-quasi-geodesic with endpoints on $\alpha'$.
By Lemma 1.11 in \cite[Chapter III.H]{bridson}, we may assume without loss of generality that $\beta$ is tame. Denote $I''=I\setminus I'$, $\alpha''=\alpha\mid_{I''}$, $I'\cap I''=\{p\}$. 
By definition, $\beta \subset \mathcal{N}_{N(\lambda_0, \epsilon_0)}(\alpha)$. 
If $\beta \subset \mathcal{N}_{N(\lambda_0, \epsilon_0)}(\alpha')$, we are done.
If not, see Figure \ref{py} and consider some segment $[a', b'] \subset [a, b]$ such that $\beta([a', b'])$ is disjoint from $\mathcal{N}_{N(\lambda_0, \epsilon_0)}(\alpha')$. 
Choose $c\in (a', b')$ and let $\beta_1=\beta([a, c])$ and $\beta_2=\beta([c, b])$. 
For any $i=1, 2$, $\beta_i$ lies within $N(\lambda_0, \epsilon_0)$ of some point on $\alpha(I)$, so by continuity, there exist points $c_1\in [a, a']$ and $c_2\in[b', b]$ such that $\beta_i(c_i)$ lies within $N(\lambda_0, \epsilon_0)$ of two points $\alpha'(r'_i), \alpha''(r''_i)$, with $r'_i\in I', r''_i\in I''$. 
Since $\alpha$ is a $(\lambda, \epsilon)$-quasi-geodesic, we have $$\frac{1}{\lambda}|r'_i- r''_i|-\epsilon \le d(\alpha'(r'_i), \alpha''(r''_i))\le d(\alpha'(r'_i), \beta_i(c_i))+d(\beta_i(c_i), \alpha''(r''_i))\le 2N(\lambda_0, \epsilon_0).$$ Thus $|r'_i- r''_i| \le 2\lambda N(\lambda_0, \epsilon_0)+\lambda \epsilon$ for $i=1,2$. Note that $I'$ intersects $I''$ at exactly one point $p$, it follows that $|r'_1- r'_2| \le 2\lambda N(\lambda_0, \epsilon_0)+\lambda \epsilon$. 
Since $\alpha'$ is also a $(\lambda, \epsilon)$-quasi-geodesic$, d(\alpha'(r'_1), \alpha'(r'_2))\le \lambda(|r'_1- r'_2|)+\epsilon$.
By triangle inequality, 
$$d(\beta_1(c_1), \beta_2(c_2))\le d(\beta_1(c_1), \alpha'(r'_1))+d(\beta_2(c_2), \alpha'(r'_2))+d(\alpha'(r'_1), \alpha'(r'_2))$$
$$\le 2N(\lambda_0, \epsilon_0)+\lambda(2\lambda N(\lambda_0, \epsilon_0)+\lambda \epsilon)+\epsilon=(\lambda^2+1)(2N(\lambda_0, \epsilon_0)+\epsilon).$$

By the tameness condition, we have $length(\beta([c_1, c_2]))\le k_1(d(\beta_1(c_1), \beta_2(c_2)))+k_2$, where $k_1, k_2$ depend only on $\lambda, \epsilon$. 
Setting $N'_0(\lambda_0, \epsilon_0)=k_1(\lambda^{2}+1)(N(\lambda_0, \epsilon_0)+\frac{1}{2}\epsilon)+\frac{1}{2}k_2$, any point on $\beta([a', b'])$ lies within $N'(\lambda_0, \epsilon_0)$ of the two points $\beta(c_1)$ and $\beta(c_2)$.
We conclude that $\alpha'$ is $N'$-Morse where $N'(\lambda_0, \epsilon_0)=N'_0(\lambda_0, \epsilon_0)+N(\lambda_0, \epsilon_0)$.
\end{proof}

The next lemma gives a basic property of Morse geodesics. It is an easy exercise we leave to the reader.

\begin{lem}\label{close M}
Let $X$ be a geodesic metric space and $C$ be a constant. Suppose that the geodesic $[a, b]$ is $N$-Morse. Let $[a', b']$ be another geodesic. Suppose that $d(a, a')$ and $d(b, b')$ are bounded by $C$.
Then $[a', b']$ is $N'$-Morse and $d_{\mathcal H}([a,b], [a', b'])$ is bounded by $D$, where $N'$ and $D$ depend only on $N, C$.
\end{lem}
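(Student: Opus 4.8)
The plan is to reduce everything to the already-proven Lemma \ref{seg M} together with the Arzel\`a--Ascoli corollaries. First I would handle the Morse conclusion. Let $\alpha$ be a geodesic from $a$ to $b$ with endpoints displaced by at most $C$, and let $\alpha'$ be a geodesic from $a'$ to $b'$. Consider the concatenation $\gamma = [a',a]\cup[a,b]\cup[b,b']$; this is a path, and since the two appended segments have length at most $C$, $\gamma$ is a $(1,\epsilon_C)$-quasi-geodesic for some $\epsilon_C$ depending only on $C$ (one checks the quasi-geodesic inequality crudely: distances change by at most $2C$). By the Morse property of $[a,b]$ applied to the ambient space — more precisely, since $[a,b]$ is $N$-Morse, any quasi-geodesic with endpoints on $[a,b]$ stays $N$-close, and conversely $[a,b]$ itself sits in a bounded neighborhood of $\gamma$ — one shows $\gamma$ is $N''$-Morse with $N''$ depending only on $N$ and $C$. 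The cleanest route: first show $d_{\mathcal H}([a,b],\gamma)\le C$ trivially; a path that is Hausdorff-close to an $N$-Morse geodesic and is itself a quasi-geodesic is $N'''$-Morse (this is essentially the content of stability of Morse quasi-geodesics, which can be extracted from Lemma \ref{E G} and Lemma \ref{seg M}, or proven directly by the same neighborhood-chasing argument as in Lemma \ref{seg M}). Then $\alpha'$ is a sub-segment of $\gamma$, so Lemma \ref{seg M} gives that $\alpha'$ is $N'$-Morse with $N'$ depending only on $N$ and $C$.

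For the Hausdorff-distance bound, I would argue as follows. Since $\alpha'$ is $N'$-Morse (just established) and $\gamma$ above is a quasi-geodesic with the same endpoints as $\alpha'$, the Morse property of $\alpha'$ forces $\gamma\subset\mathcal N_{N'(1,\epsilon_C)}(\alpha')$, hence $[a,b]\subset\mathcal N_{N'(1,\epsilon_C)}(\alpha')$. Symmetrically, $[a',b']=\alpha'$ is a quasi-geodesic with endpoints within $C$ of the endpoints of the $N$-Morse geodesic $[a,b]$; forming the analogous concatenation $[a,a']\cup\alpha'\cup[b',b]$ and invoking the Morse property of $[a,b]$ shows $\alpha'\subset\mathcal N_{N(1,\epsilon_C)+C}([a,b])$. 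Combining the two inclusions yields $d_{\mathcal H}([a,b],[a',b'])\le D$ with $D=\max\{N'(1,\epsilon_C),\ N(1,\epsilon_C)+C\}$ depending only on $N$ and $C$, as required.

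I would expect the main obstacle to be the first bulleted point above: namely, establishing cleanly that a bounded-endpoint-perturbation of an $N$-Morse geodesic is again uniformly Morse, without circularity. The subtlety is that to run the Lemma \ref{seg M} argument one wants a Morse \emph{super}-geodesic (or super-quasi-geodesic) containing $\alpha'$, but a priori the natural candidate is the concatenation $\gamma$, whose Morse-ness is exactly what is in question. The resolution is that $\gamma$'s Morse-ness does \emph{not} depend on $\alpha'$: it only uses that $\gamma$ is a quasi-geodesic, that $\gamma$ is Hausdorff-close to the $N$-Morse geodesic $[a,b]$, and the stability of Morse quasi-geodesics — a statement about $[a,b]$ alone. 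So the logical order is: (i) prove $\gamma$ is an $(1,\epsilon_C)$-quasi-geodesic; (ii) prove $\gamma$ is $N''$-Morse using only data of $[a,b]$; (iii) apply Lemma \ref{seg M} to the sub-segment $\alpha'$ of $\gamma$; (iv) deduce the Hausdorff bound from the Morse-ness of $\alpha'$ and $[a,b]$. All of the estimates in (i), (ii), (iv) are of the same elementary triangle-inequality and neighborhood-chasing flavor as the proof of Lemma \ref{seg M}, so I would present them briefly and refer back to that lemma rather than reproduce the bookkeeping. This is why the author labels it an easy exercise.
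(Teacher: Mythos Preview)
The paper itself leaves this lemma as an exercise, so there is no author's proof to compare against; I will just assess your argument on its own merits.

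There is one genuine gap. In your step (iii) you write ``$\alpha'$ is a sub-segment of $\gamma$, so Lemma \ref{seg M} gives that $\alpha'$ is $N'$-Morse.'' But $\alpha'=[a',b']$ is \emph{not} a sub-segment of $\gamma=[a',a]\cup[a,b]\cup[b,b']$: these are two different paths from $a'$ to $b'$. Lemma \ref{seg M} is about restricting a Morse quasi-geodesic $\alpha\colon I\to X$ to a subinterval $I'\subset I$ of its \emph{domain}; it says nothing about an unrelated geodesic that merely happens to share endpoints with $\gamma$. So the appeal to Lemma \ref{seg M} is not valid here, and your derivation of the Morse property of $\alpha'$ breaks at exactly this point.

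The repair is easy and in fact you already have all the ingredients, just in the wrong order. Your ``symmetric'' concatenation $[a,a']\cup\alpha'\cup[b',b]$ is a $(1,\epsilon_C)$-quasi-geodesic with endpoints on the $N$-Morse geodesic $[a,b]$, so the Morse property of $[a,b]$ immediately gives $\alpha'\subset\mathcal N_{N(1,\epsilon_C)}([a,b])$; this uses nothing about $\alpha'$ beyond its being a geodesic. The reverse inclusion $[a,b]\subset\mathcal N_{D}(\alpha')$ then follows from an elementary geodesic-tracking argument (two geodesics of nearly equal length, one contained in a tube around the other, with nearby endpoints). Once the Hausdorff bound $d_{\mathcal H}([a,b],\alpha')\le D$ is in hand, the Morse property of $\alpha'$ is immediate: given a $(\lambda,\epsilon)$-quasi-geodesic $\beta$ with endpoints on $\alpha'$, append short segments of length $\le D$ to land on $[a,b]$, apply the Morse property of $[a,b]$, and transfer back across the Hausdorff distance. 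So the correct logical order is Hausdorff bound first, Morse-ness second --- the reverse of what you wrote --- and Lemma \ref{seg M} is not needed at all.
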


The following quite useful lemma is the combination of Lemma 2.3 and Lemma 2.4 in \cite{charney2018quasi}.
It states that a geodesic triangle with two $N$-Morse sides is slim and its third side is also Morse. It is important in showing the thinness of a Morse triangle. In later sectoions we will use it to show that certain sequences of geodesics are uniformly Morse.

\begin{lem}[\cite{charney2018quasi}]\label{slim 1}
Let X be a proper geodesic metric space. 
Let $\triangle(x, y, z)$ be a geodesic triangle with vertices $x, y, z\in X\cup\partial_MX$ and suppose that two sides of $\triangle(x, y, z)$  are $N$-Morse. Then the third side is $N_1$-Morse and $\triangle(x, y, z)$ is $\delta_N$-slim where $N_1$ and $\delta_N$ depend only on $N$.
\end{lem}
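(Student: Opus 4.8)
The plan is to prove both conclusions at once using the Morse lemma philosophy together with the Arzelà–Ascoli corollaries, handling first the case where all three vertices lie in $X$ and then passing to the general case by an approximation argument. Assume the two $N$-Morse sides are $[x,y]$ and $[x,z]$ (relabeling if necessary so they share the vertex $x$). First I would show slimness: I claim every point on $[x,y]$ lies uniformly close to $[x,z]\cup[y,z]$. Fix a point $w$ on $[x,y]$ and consider the concatenation $\gamma=[x,z]\cup[z,y]$ from $x$ to $y$; this is a $(1,0)$-quasi-geodesic only up to the usual constant, in fact it is a path of length $d(x,z)+d(z,y)$ joining the endpoints of the $N$-Morse geodesic $[x,y]$, so after replacing it by a tame $(\lambda_0,\epsilon_0)$-quasi-geodesic (Lemma 1.11 of Bridson–Haefliger, with $\lambda_0,\epsilon_0$ absolute) it is contained in $\mathcal{N}_{N(\lambda_0,\epsilon_0)}([x,y])$; but I actually want the reverse containment. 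The cleaner route is: the geodesic $[y,z]$ and the two $N$-Morse geodesics $[x,y],[x,z]$; to bound $d(w,[x,z]\cup[y,z])$ for $w\in[x,y]$, parametrize $[x,y]$ from $x$ and let $w=[x,y](t)$. Compare $[x,y]$ with the quasi-geodesic $\sigma$ obtained by going along $[x,z]$ and then $[z,y]$: since $[x,z]$ is $N$-Morse, the geodesic $[z,y]$ together with $[z,x]$ forms a triangle two of whose sides we control, and iterating is circular. So instead I follow the argument of \cite{charney2018quasi}: use that $[x,y]$ is $N$-Morse to control a quasi-geodesic built from $[x,z]$ and $[x,y]$ — no — the genuinely clean statement is that since $[x,y]$ is $N$-Morse and $[x,z]$ is $N$-Morse, for a point $w$ on $[x,y]$ at distance $t$ from $x$ and the point $w'$ on $[x,z]$ at distance $t$ from $x$, once $t$ is at most (roughly) the Gromov product $(y\cdot z)_x$, the geodesics $[x,y]$ and $[x,z]$ fellow-travel up to a constant $\delta_N$ depending only on $N$; beyond that, $w$ is within $\delta_N$ of $[y,z]$.

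To make the fellow-traveling precise I would argue by contradiction using Arzelà–Ascoli (Corollary \ref{aa1}). Suppose no such $\delta_N$ works: then there are triangles $\triangle(x_n,y_n,z_n)$ with $[x_n,y_n],[x_n,z_n]$ all $N$-Morse and points $w_n\in[x_n,y_n]$ with $d(w_n,[x_n,z_n]\cup[y_n,z_n])\to\infty$. Recenter each triangle at $w_n$ (translate so $w_n$ is a fixed basepoint — using an isometry of $X$ is not available, so instead just use that $X$ is proper and pass to limits of the geodesic segments, whose lengths on either side of $w_n$ must go to infinity since otherwise $w_n$ would be close to an endpoint hence to $[y_n,z_n]$ via that endpoint). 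By Corollary \ref{aa2} a subsequence of the geodesics $[x_n,y_n]$, recentered at $w_n$, converges uniformly on compacta to a bi-infinite geodesic $\ell$; and $\ell$ is $N$-Morse because the $N$-Morse condition passes to limits (any quasi-geodesic with endpoints on $\ell$ has, after truncating to a compact piece, endpoints approximated by points on the $[x_n,y_n]$, so lies in the $N(\lambda,\epsilon)$-neighborhood; a short limiting argument). Similarly the geodesics $[x_n,z_n]$ (which pass uniformly far from $w_n$) converge, after subsequence, to a bi-infinite geodesic $m$, also $N$-Morse, and $d_{\mathcal H}(\ell,m)=\infty$ with $\ell$ and $m$ both having "ends going off to the same place" on at least one side — more carefully, the two geodesics $[x_n,y_n]$ and $[x_n,z_n]$ agree at $x_n$, and $x_n$ either stays bounded (then it recenters to a common point, contradicting that $w_n$ is far from $[x_n,z_n]$ for bounded-away $x_n$ forces... ) or escapes, in which case $\ell$ and $m$ share an endpoint at infinity. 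Two $N$-Morse rays sharing an endpoint are, by Lemma \ref{E G}, within $C_N$ of each other; this contradicts $d(w_n,[x_n,z_n])\to\infty$. This contradiction yields the slimness constant $\delta_N$, and then $N_1$-Morseness of the third side $[y,z]$ follows quickly: any quasi-geodesic with endpoints on $[y,z]$ can be pushed (using $\delta_N$-slimness) onto a path near $[x,y]\cup[x,z]$, and since those are $N$-Morse, one recovers a Morse bound — this is exactly the packaging in Lemma 2.3–2.4 of \cite{charney2018quasi}, which I would cite for the bookkeeping, or reprove via one more Arzelà–Ascoli limiting argument showing "$[y,z]$ not uniformly Morse" forces a bi-infinite limit geodesic that fellow-travels the (Morse, by the slimness already proven and a limit) union of limits of $[x,y]$ and $[x,z]$, hence is Morse, contradiction.

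For the case where some of $x,y,z$ lie in $\partial_MX$, I would approximate: if $y\in\partial_MX$, represent it by an $N$-Morse ray and truncate at $y_k=[x,y](k)$ (or the analogous point on whichever of the two Morse sides emanates toward $y$); the geodesics $[x,y_k]$ are $N$-Morse by Lemma \ref{seg M} (with the same $N$, since sub-segments of an $N$-Morse geodesic are $N$-Morse — the $\lambda=1,\epsilon=0$ case of that lemma), similarly for $z_k$ if $z\in\partial_MX$. Apply the finite-vertex case to $\triangle(x,y_k,z_k)$ to get uniform $\delta_N$-slimness and $N_1$-Morseness of $[y_k,z_k]$, all constants independent of $k$. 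Then let $k\to\infty$: by Corollary \ref{aa2} (with bounded $\{[y_k,z_k](0)\}$ — or rather, pick points at bounded distance, guaranteed since all these geodesics are $\delta_N$-close to $[x,y]\cup[x,z]$) a subsequence of $[y_k,z_k]$ converges uniformly on compacta to a (bi-infinite, if both $y,z\in\partial_MX$) geodesic, which is the side $[y,z]$ of the ideal triangle up to the standard non-uniqueness, and which inherits $N_1$-Morseness in the limit and $\delta_N$-closeness to the other two sides.

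The main obstacle I expect is the limiting step identifying when the two Morse sides "share an endpoint": controlling the location of $x_n$ in the contradiction argument, and ensuring the recentering at $w_n$ (rather than at a group element, which we do not have) still produces bi-infinite limit geodesics on both sides — one must rule out $w_n$ being within bounded distance of the vertex $x_n$ or of $[y_n,z_n]$ by its very choice, and then argue that the far geodesic $[x_n,z_n]$, reparametrized, also has unbounded length on both sides of its nearest point to $w_n$. A secondary nuisance is verifying that $N$-Morseness genuinely passes to uniform-on-compacta limits of geodesics, which needs the observation that a quasi-geodesic arc with endpoints on the limit can be perturbed to have endpoints on the approximants at the cost of an $\epsilon$ that is absorbed. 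Both are standard but require care; alternatively, since the statement is quoted as the combination of Lemma 2.3 and Lemma 2.4 of \cite{charney2018quasi}, the cleanest honest proof is to cite that reference directly and merely note that the extension to ideal vertices is the truncation-and-limit argument just sketched.
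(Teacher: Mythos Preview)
The paper does not supply a proof of this lemma at all: it is stated as a direct citation of Lemmas~2.3 and~2.4 of \cite{charney2018quasi}. So your final remark --- that the cleanest honest proof is to cite that reference --- is exactly what the paper does, and on that level your proposal agrees with the paper.

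The contradiction argument you sketch in the body, however, has a real gap. You choose triangles $\triangle(x_n,y_n,z_n)$ and points $w_n\in[x_n,y_n]$ with $d(w_n,[x_n,z_n]\cup[y_n,z_n])\to\infty$, and you want to ``recenter at $w_n$'' and extract convergent subsequences via Arzel\`a--Ascoli (Corollaries~\ref{aa1}, \ref{aa2}). But those corollaries require the basepoints to lie in a bounded subset of the fixed proper space $X$, and nothing prevents the $w_n$ from wandering off to infinity in $X$. There is no group action assumed in this lemma, so you cannot translate $w_n$ back to a fixed basepoint. What you would need to make this work is a pointed Gromov--Hausdorff (or ultralimit) argument applied to the sequence $(X,w_n)$, together with the verification that the $N$-Morse property persists in such limits; that machinery is substantially heavier than what the paper sets up, and you do not invoke it. The actual argument in \cite{charney2018quasi} is direct rather than by contradiction: it builds explicit quasi-geodesics from pieces of the two $N$-Morse sides and uses the Morse gauge $N$ itself (together with Lemma~\ref{seg M}) to produce the constant $\delta_N$, with no compactness or limiting step.

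A secondary issue: even if the limits existed, your appeal to Lemma~\ref{E G} at the end (``two $N$-Morse rays sharing an endpoint are within $C_N$ of each other'') is for rays based at the same point; your limit objects $\ell$ and $m$ would be bi-infinite geodesics asymptotic at one end, which is a different statement requiring its own justification.
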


\begin{rmk}\label{rmk1}
By Lemma \ref{seg M} and Lemma \ref{slim 1}, we can see that for two points $p, q$ in the Morse boundary, all bi-infinite geodesics between p and q are uniformly Morse and have uniform Hausdorff distance. 
\end{rmk}

\begin{cor}\label{slim 2}
Let X be a proper geodesic metric space. Let $\triangle(x, y, z)$ be a geodesic triangle with vertices $x, y, z\in X\cup\partial_MX$ and suppose that two sides of $\triangle(x, y, z)$ are $N$-Morse. Let $\triangle(x', y', z')$ be any geodesic triangle with vertices $x', y', z'$ on the sides of $\triangle(x, y, z)$.
Then there exits a constant $\delta_N'$ depending only on $N$ so that $\triangle(x', y', z')$ is $\delta_N'$-slim.
\end{cor}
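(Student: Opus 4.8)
The goal is to deduce Corollary~\ref{slim 2} from Lemma~\ref{slim 1} and Lemma~\ref{seg M}. The plan is as follows.

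\medskip

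\emph{Step 1: Reduce to a single subdivided side.} First I would observe that the triangle $\triangle(x', y', z')$ can be reached from $\triangle(x, y, z)$ by moving one vertex at a time onto a side of the current triangle, so it suffices to prove a statement of the form: if $\triangle(x, y, z)$ has two $N$-Morse sides and $x'$ lies on one of its sides, then $\triangle(x', y, z)$ has two $N''$-Morse sides with $N''$ depending only on $N$. Iterating this three times yields the general $\triangle(x', y', z')$ with all three vertices on the original sides and two sides that are $N'''$-Morse for some $N'''=N'''(N)$; then Lemma~\ref{slim 1} applied to $\triangle(x', y', z')$ gives the desired $\delta_N'$-slimness with $\delta_N'$ depending only on $N$ (through $N'''$).

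\medskip

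\emph{Step 2: Prove the one-vertex move.} Suppose two sides of $\triangle(x,y,z)$ are $N$-Morse. There are two cases for where the new vertex $x'$ sits. If $x'$ lies on the third side $[y,z]$, then by Lemma~\ref{slim 1} that side is $N_1$-Morse with $N_1=N_1(N)$, hence by Lemma~\ref{seg M} (applied to geodesics, i.e.\ $(1,0)$-quasi-geodesics, so no $\lambda,\epsilon$ dependence beyond $N$) the subsegments $[y,x']$ and $[x',z]$ are $N_2$-Morse with $N_2=N_2(N_1)=N_2(N)$. Now in $\triangle(x', y, z)$ consider the new side $[x',?]$: actually the relevant new triangle is $\triangle(x', y', z')$-type, but for this single move the side $[x', y]$ is $N_2$-Morse, the side $[y,z]$ contains $[y, x']$... let me instead just record that two of the three sides of $\triangle(x', y, z)$, namely $[y, z]$ (which is $N_1$-Morse) and the subsegment issue, are handled: $[x', y]\subset$ nothing new is needed because $[x',y]$ is not a side of $\triangle(x',y,z)$. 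The cleaner bookkeeping: the new triangle has sides $[x', y]$, $[x', z]$, $[y, z]$. Here $[y,z]$ is $N_1$-Morse and $[x',y], [x',z]$ are geodesics we must control. But $[x', y]$ need not be a subsegment of an original side. To fix this, I instead use: $x'$ lies on $[y,z]$, which is $N_1$-Morse, so $[x', y]$ and $[x', z]$ are $N_2$-Morse subsegments of $[y,z]$ \emph{only if we take those subsegments of $[y,z]$}, not new geodesics. Since geodesics between two points need not be unique, I appeal to Lemma~\ref{close M} / Remark-type stability: the sides $[x',y]$ and $[x',z]$ of the new triangle, being geodesics with the same endpoints as the $N_2$-Morse subsegments of $[y,z]$, are themselves $N_3$-Morse with $N_3=N_3(N)$ (Lemma~\ref{close M} with $C=0$). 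Thus $\triangle(x',y,z)$ has (at least) the two $N_3$-Morse sides $[x',y]$ and $[x',z]$, completing this case. If instead $x'$ lies on one of the two $N$-Morse sides, say $x'\in[x,y]$, then $[x',y]$ and $[x',x]$ are $N'_2$-Morse by Lemma~\ref{seg M}, hence the sides of $\triangle(x',y,z)$ with those endpoints are $N'_3$-Morse by Lemma~\ref{close M}, and also the original side $[y,z]$ or $[x,z]$ survives as an $N$-Morse side; so again we get two uniformly Morse sides.

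\medskip

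\emph{Step 3: Assemble.} Combining, after at most three vertex moves every side of $\triangle(x', y', z')$ that we need is $N^\ast$-Morse for a Morse gauge $N^\ast$ depending only on $N$; in particular two of its sides are. Apply Lemma~\ref{slim 1} to conclude $\triangle(x', y', z')$ is $\delta_N'$-slim with $\delta_N'=\delta_{N^\ast}$ depending only on $N$.

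\medskip

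\emph{Main obstacle.} The one genuinely fiddly point is the non-uniqueness of geodesics: a side of the new triangle such as $[x', y']$ is an arbitrary geodesic with those endpoints, not literally a subsegment of an old side, so one must invoke stability under perturbation of endpoints (Lemma~\ref{close M} with $C=0$, or equivalently Lemma~\ref{seg M} phrased for the chosen geodesic) to transfer the Morse gauge. Once that is absorbed, everything is a bounded, finite iteration and all constants visibly depend only on $N$.
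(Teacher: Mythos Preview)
Your Step 1 contains a genuine gap: the claim that $\triangle(x',y',z')$ can be reached from $\triangle(x,y,z)$ by moving one vertex at a time onto a side of the \emph{current} triangle is not justified, and fails in general. Take $x'$ in the interior of $[y,z]$, $y'$ in the interior of $[x,z]$, and $z'$ in the interior of $[x,y]$. Whichever vertex you move first---say $x\to x'$---the new triangle $\triangle(x',y,z)$ has sides $[x',y]$, $[y,z]$, $[x',z]$, and neither $y'\in[x,z]$ nor $z'\in[x,y]$ lies on any of them. Since your Step 2 explicitly analyzes the case where the moved vertex lies on a side of the current triangle, it cannot be iterated to reach $\triangle(x',y',z')$. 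The bookkeeping confusion you flag midway through Step 2 is a symptom of this: you are trying to force an inductive structure that the configuration does not support.

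The paper's argument is direct and avoids the issue. After using Lemma~\ref{slim 1} to make all three sides of $\triangle(x,y,z)$ uniformly Morse and Lemma~\ref{seg M} to make the subsegments cut out by $x',y',z'$ uniformly $N_2$-Morse, one shows in one shot that each side $[x',y']$ of the inner triangle is Morse: the (at most two) sides of $\triangle(x,y,z)$ carrying $x'$ and $y'$ share a vertex $v\in\{x,y,z\}$, so $\triangle(v,x',y')$ has two $N_2$-Morse sides and Lemma~\ref{slim 1} forces $[x',y']$ to be $N_3$-Morse. A final application of Lemma~\ref{slim 1} to $\triangle(x',y',z')$ gives $\delta'_N$-slimness. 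Your concern about non-uniqueness of geodesics is handled automatically here, since Lemma~\ref{slim 1} applies to whatever geodesic is chosen as the third side; no separate appeal to Lemma~\ref{close M} is needed.
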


\begin{proof}
By Lemma \ref{slim 1}, the third side of $\triangle(x, y, z)$ is $N_1$-Morse. The points $x', y', z'$ divide the sides of $\triangle(x, y, z)$ into finitely many $N_2$-Morse geodesics by Lemma \ref{seg M} and each side of $\triangle(x', y', z')$ is $N_3$-Morse for some $N_3$ by Lemma \ref{slim 1}, where $N_1$, $N_2$ and $N_3$ depend only on $N$. Hence it is $\delta_{N_{3}}$-slim.
\end{proof}

With Corollary \ref{slim 2} in mind we can show that a triangle with two Morse sides is thin. Sometimes the thin triangle condition is easier to use than the slim triangle condition.
\begin{lem}\label{thin}
Let $X$ be a geodesic metric space. Let $\triangle(x, y, z)$ be a geodesic triangle with vertices $x, y, z\in X$ and suppose that two sides of $\triangle(x, y, z)$ are $N$-Morse. Then there exists a constant  $\delta''_N$ depending only on $N$ so that $\triangle(x, y, z)$ is $\delta''_N$-thin.
\end{lem}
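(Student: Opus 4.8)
The plan is to deduce thinness from the slimness statement in Corollary \ref{slim 2}, following the standard comparison argument that in a CAT(0)-type setting translates "slim" into "thin". Fix the triangle $\triangle(x,y,z)$ with $N$-Morse sides $[x,y]$ and $[x,z]$; by Lemma \ref{slim 1} the third side $[y,z]$ is $N_1$-Morse and the whole triangle is $\delta_N$-slim, with $N_1,\delta_N$ depending only on $N$. Let $p\in[x,y]$ and $q\in[x,z]$ satisfy $d(x,p)=d(x,q)=t\le \frac12(d(x,y)+d(x,z)-d(y,z))$; I want a uniform bound on $d(p,q)$. The first step is to locate the "internal points" (Gromov centre): the point $i_{xy}\in[x,y]$ and $i_{xz}\in[x,z]$ at distance $\frac12(d(x,y)+d(x,z)-d(y,z))$ from $x$, and the matching point on $[y,z]$, and to show these three internal points are pairwise within a uniform constant of each other, again depending only on $N$. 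This is exactly where I would invoke Corollary \ref{slim 2}: apply it to the small triangle whose vertices are the internal points (which lie on the sides of $\triangle(x,y,z)$), plus a short chasing argument using $\delta_N$-slimness to see that each internal point is close to the opposite side.

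Once the internal points are uniformly close, the second step is the local-to-global step along the two legs from $x$. Consider the point $p\in[x,y]$ with $d(x,p)=t\le d(x,i_{xy})$. By $\delta_N$-slimness, $p$ is within $\delta_N$ of a point $p'$ on $[x,z]\cup[y,z]$. I would argue $p'$ must lie on $[x,z]$ rather than on $[y,z]$: if $p'\in[y,z]$, then $p$ is $\delta_N$-close to $[y,z]$, and combining with the position of the internal points one forces $t$ to be essentially $\ge d(x,i_{xy})$ up to a uniform error, so after enlarging the final constant we may treat that boundary case separately. Granting $p'\in[x,z]$, compare the parameters: $|d(x,p)-d(x,p')|\le d(p,p')\le\delta_N$, so $p'$ is within $\delta_N$ of the point $q\in[x,z]$ with $d(x,q)=t$; hence $d(p,q)\le 2\delta_N$. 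Setting $\delta''_N:=2\delta_N$ plus whatever uniform slack was absorbed in handling the internal-point region gives the claim.

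I should double-check that the statement is for vertices in $X$ (not in $\partial_M X$), so all segments are honest finite geodesics and the quantities $d(x,y)$ etc. are finite — this is what lets me speak of the internal points at all, and it avoids the degenerate situations that would otherwise need care. The only genuinely delicate point is the transition region near the internal points: making sure the bound on $d(p,q)$ does not blow up when $t$ is close to the threshold $\frac12(d(x,y)+d(x,z)-d(y,z))$. That is handled precisely because the three internal points are uniformly close (step one), so on a uniformly bounded neighbourhood of the threshold all of $[x,p]$, $[x,q]$ and the relevant arc of $[y,z]$ are uniformly close to each other by slimness of the small triangle; outside that neighbourhood the clean comparison of step two applies.

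The main obstacle, then, is step one: proving the three internal points are within a constant depending only on $N$. I expect to get this by applying Corollary \ref{slim 2} to the triangle with vertices the three internal points together with the observation that slimness of $\triangle(x,y,z)$ forces, e.g., $i_{xy}$ to be within $\delta_N$ of $[x,z]\cup[y,z]$ and a parameter-count (using $d(x,i_{xy})=d(x,i_{xz})$ and the definition of the internal points) to pin down which side and where. Everything after that is the routine CAT(0)-style bookkeeping above, with all constants tracked to depend only on $N$ via Lemma \ref{slim 1} and Corollary \ref{slim 2}.
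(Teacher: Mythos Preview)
Your proposal is correct and takes essentially the same approach as the paper: use Lemma~\ref{slim 1} and Corollary~\ref{slim 2} to obtain uniform slimness of $\triangle(x,y,z)$ and of all subtriangles with vertices on its sides, and then run the standard slim-implies-thin reduction via the internal points and a parameter comparison. The paper does not spell this reduction out but simply cites Proposition~2.1 of \cite{alonso1991notes} for it, recording the resulting constant $\delta''_N = 2\delta'_N + 4\delta_N$.
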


\begin{proof}
By Lemma \ref{slim 1}, the triangle $\triangle(x, y, z)$ is $\delta_N$-slim.
For any vertices $x', y', z'$ on the sides of $\triangle(x, y, z)$, the triangle $\triangle(x', y', z')$ is $\delta'_N$-slim for some constant $\delta'_N$ depending on $N$ by Corollary \ref{slim 2}. The rest of the proof now follows that of Proposition 2.1 \cite{alonso1991notes}. The triangle $\triangle(x, y, z)$ is $\delta''_N$-thin by taking $\delta''_N=2\delta_N'+4\delta_N$.

\end{proof}

\section{Morse Isometries}

In this section we study fixed points of isometries on the Morse boundary. Moreover for a finitely generated group, we give a characterization of Morse elements by its fixed points on the Morse boundary. Given an isometry $g$ of $X$, let $Fix_{\partial_M{X}}(g)$ denote fixed points of $g$ on the Morse boundary.

\subsection{Morse isometries}
Before the next definition and lemma, we give some notations.

Let $X$ be a geodesic metric space and let $x_0\in X$ be a basepoint. For an isometry $g : X\to X$, denote $x_n=g^n(x_0)$, $\eta_{i}^{j}=[x_{i}, x_{i+1}]\cup...\cup[x_{j-1}, x_{j}]$, where $i, j\in \bbZ$. For convenience, we allow that $i={-\infty}$ or $j={\infty}$.

\begin{defn}[Morse Isometries]\label{Morse}
We say the action of $g$ on $X$ is $Morse$ or the isometry $g$ is $Morse$ if there exist a Morse gauge $N$ and a geodesic $[x_i, x_{i+1}]$ for each $i$ such that the bi-infinite concatenation of geodesics $\eta_{-\infty}^{\infty}$ is an $N$-Morse quasi-geodesic.
\end{defn}

Using the properties of Morse quasi-geodesics, it is easy to see that the notion does not depend on the choice of basepoint or choices of geodesics. In particular, we can choose a geodesic $[x_0, x_1]$ and take $[x_i, x_{i+1}]=g^{i}[x_0, x_1]$ for all $i$. Note that for a Morse isometry $g$, there is a $\mu > 0$ such that $d(x_n, x_0)\ge n\mu$ for all $n\in\bbN$.

\begin{defn}[Morse Elements]
Let $G$ be a finitely generated group.
We say $g\in G$ is a Morse element if the action of $g$ on the Cayley graph of $G$ for some (hence every) finite generating set is Morse.
\end{defn}

\begin{prop}
If $X$ is a $\delta$-hyperbolic space then every hyperbolic isometry of $X$ is Morse.
If $X$ is a proper $CAT(0)$ space then every rank-one isometry of $X$ is Morse.
\end{prop}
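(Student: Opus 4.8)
The plan is to treat the two cases separately, in each case exhibiting the required Morse gauge $N$ for the bi-infinite path $\eta_{-\infty}^{\infty}=\bigcup_{k\in\bbZ}g^k[x_0,x_1]$.

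\emph{Hyperbolic isometries in $\delta$-hyperbolic spaces.} First I would recall the standard fact that in a proper $\delta$-hyperbolic space a hyperbolic isometry $g$ has an axis: the orbit map $n\mapsto g^n(x_0)$ is a quasi-isometric embedding of $\bbZ$ into $X$, and more precisely the concatenation $\eta_{-\infty}^{\infty}$ is a $(\lambda,\epsilon)$-quasi-geodesic for some $\lambda,\epsilon$ depending on $g$ and $x_0$ (this follows from the existence of a positive translation length, using that $d(x_0,g^n x_0)\ge n\tau - C$ for the stable translation length $\tau>0$, together with the triangle inequality to bound concatenated segments; concatenations of uniformly-bounded-length segments whose endpoints track a quasi-geodesic are themselves quasi-geodesics). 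Then I would invoke the Morse lemma for hyperbolic spaces (mentioned in the excerpt right after the definition of Morse geodesics): every $(\lambda,\epsilon)$-quasi-geodesic in a $\delta$-hyperbolic space is $N$-Morse for a gauge $N=N(\lambda,\epsilon,\delta)$. Hence $\eta_{-\infty}^{\infty}$ is $N$-Morse and $g$ is a Morse isometry by Definition \ref{Morse}.

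\emph{Rank-one isometries in proper CAT(0) spaces.} By definition a rank-one isometry $g$ is a hyperbolic (axial) isometry admitting an axis $\gamma$ — a $g$-invariant geodesic line on which $g$ acts by translation — that does not bound a flat half-plane. The work of Charney--Sultan (and Bestvina--Fujiwara) shows precisely that such an axis is contracting, equivalently Morse: there is a Morse gauge $N$ so that $\gamma$ is $N$-Morse. Taking $x_0$ on $\gamma$, the path $\eta_{-\infty}^{\infty}$ coincides with $\gamma$ (geodesics between points of $\gamma$ are $\gamma$ itself, by uniqueness of geodesics in CAT(0) spaces), so it is an $N$-Morse geodesic and $g$ is Morse. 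For a general basepoint $x_0\notin\gamma$, I would use Lemma \ref{close M} (or the basepoint-independence remark following Definition \ref{Morse}): the path through the $x_0$-orbit fellow-travels the axis at bounded distance $d(x_0,\gamma)$, so it is a Morse quasi-geodesic with gauge depending only on $N$ and $d(x_0,\gamma)$.

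The main obstacle is marshalling the correct citations and being careful that ``rank-one isometry'' is used in the sense that includes the no-flat-half-plane (equivalently, contracting-axis) condition — without that hypothesis the statement is false (a hyperbolic isometry of $\bbR^2$ is axial but not Morse). The quasi-geodesic-from-concatenation step in the hyperbolic case is routine but should be stated cleanly: segments $[x_i,x_{i+1}]$ all have the same length $d(x_0,x_1)$, and $d(x_i,x_j)\ge |i-j|\tau-C$ gives the lower quasi-geodesic bound while the triangle inequality gives $d(x_i,x_j)\le |i-j|\,d(x_0,x_1)$ for the upper bound, and local-to-global for quasi-geodesics (or a direct estimate using $\delta$-hyperbolicity) upgrades the vertex estimates to an estimate along the whole path. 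I do not expect any genuine difficulty beyond bookkeeping.
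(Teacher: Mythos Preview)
The paper states this proposition without proof, treating it as a well-known observation linking the classical notions (hyperbolic isometry, rank-one isometry) to the new Definition~\ref{Morse}. Your argument is a correct and standard way to supply the details: in the hyperbolic case you use positive stable translation length to get a quasi-geodesic orbit and then the Morse lemma; in the CAT(0) case you invoke the Charney--Sultan/Bestvina--Fujiwara equivalence between ``axis bounds no half-flat'' and ``axis is contracting/Morse,'' together with basepoint-independence. There is nothing to compare against in the paper itself, and your proof is the expected one; the only cosmetic point is that the ``local-to-global'' aside in the hyperbolic case is unnecessary---once you have the vertex inequality $d(x_i,x_j)\ge |i-j|\tau - C$ and each edge has length $d(x_0,x_1)$, the full arc-length parametrization of $\eta_{-\infty}^{\infty}$ is a $(\lambda,\epsilon)$-quasi-geodesic by a direct triangle-inequality estimate, no hyperbolicity needed at that step.
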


\subsection{Characterization of Morse elements}

The following lemma states that an infinite cyclic group which acts properly on a proper geodesic metric space has at most two fixed points on the Morse boundary. 

\begin{lem}\label{<3}
Let $X$ be a proper geodesic metric space. Let $g : X\to X$ be an isometry with infinite order. 
Suppose that the cyclic group $\langle g\rangle$ acts properly on $X$. Then $g$ has at most two fixed points on $\partial_{M}X$.
\end{lem}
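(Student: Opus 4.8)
The plan is to argue by contradiction: suppose $g$ has three distinct fixed points $p_1, p_2, p_3 \in \partial_M X$ and derive a violation of properness of $\langle g \rangle$. Fix a basepoint $x_0$ and choose, for each pair $i \neq j$, a bi-infinite geodesic $\gamma_{ij}$ from $p_i$ to $p_j$; by Theorem \ref{MT of C} these exist, and by Remark \ref{rmk1} (i.e., Lemma \ref{seg M} and Lemma \ref{slim 1}) they are all $N$-Morse for a single Morse gauge $N$ depending only on the three points. The key geometric input is that the three bi-infinite geodesics $\gamma_{12}, \gamma_{13}, \gamma_{23}$ form an ``ideal triangle'' which is uniformly slim and uniformly thin in the Morse sense — for the purpose of this argument I would approximate it by finite Morse triangles and apply Corollary \ref{slim 2} / Lemma \ref{thin} so that there is a constant $\delta$, depending only on $N$, and a point $m \in X$ within distance $\delta$ of all three geodesics (a ``center'' of the ideal triangle); the standard hyperbolic-space argument for this goes through verbatim once all sides are uniformly Morse and all sub-triangles are uniformly slim.

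Next I would use that $g$ fixes each $p_i$. Since $g$ is an isometry fixing both endpoints of $\gamma_{ij}$ at infinity, $g(\gamma_{ij})$ is another bi-infinite geodesic from $p_i$ to $p_j$, hence (Lemma \ref{slim 1} applied to the bigon, or Remark \ref{rmk1}) lies within uniformly bounded Hausdorff distance $D = D(N)$ of $\gamma_{ij}$. Consequently $g$ moves the center $m$ a bounded amount: $g(m)$ lies within $\delta + D$ of each of $g(\gamma_{ij})$, hence within $\delta + 2D$ of each $\gamma_{ij}$, hence within $3(\delta + 2D)$-ish of the original center $m$ by the uniqueness-up-to-bounded-error of centers of thin ideal triangles. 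The same bound holds for every power: $g^k(m)$ is within the same constant $R := R(N)$ of $m$ for all $k \in \bbZ$, because $g^k$ also fixes all three $p_i$ and the constants $\delta, D$ were extracted once and for all from $N$, which is $g$-invariant. Therefore the infinite set $\{g^k(m) : k \in \bbZ\}$ lies in the closed ball $\bar B(m, R)$, which is compact since $X$ is proper.

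Finally, to reach a contradiction with properness I would observe that the $g^k(m)$ are ``genuinely infinitely many distinct points'': since $g$ has infinite order and $\langle g\rangle$ acts properly, the orbit $\langle g \rangle \cdot m$ is infinite (if $g^k(m) = m$ for some $k \neq 0$ then $g^k$ stabilizes a point, and properness forces $g^k$ to lie in a finite set — contradicting infinite order; more carefully, properness gives that only finitely many group elements move $m$ a bounded amount, so infinitely many $g^k(m)$ must escape every ball). This directly contradicts $\{g^k(m)\} \subset \bar B(m, R)$. Hence $g$ cannot have three fixed points, so it has at most two. The main obstacle I anticipate is the first step — carefully establishing the existence of a uniformly-bounded ``center'' for the ideal Morse triangle and the bounded-displacement estimate for $g$ — since this requires approximating bi-infinite geodesics by finite ones and invoking the Morse thin/slim triangle machinery (Corollary \ref{slim 2}, Lemma \ref{thin}) uniformly; everything downstream is then a soft properness argument.
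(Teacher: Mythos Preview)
Your proposal is correct and follows essentially the same strategy as the paper: both produce a ``center'' of the ideal triangle on the three fixed points and show that the $\langle g\rangle$-orbit of this center is bounded, contradicting properness. The paper streamlines your first step by invoking \cite[Lemma~2.5]{charney2018quasi}, which directly asserts that the set $E_{\delta_N}(a,b,c)$ of points within $\delta_N$ of all three sides is nonempty and of bounded diameter, and then simply observes that $g^n(p)\in E_{\delta_N}(a,b,c)$ for every $n$ since $g$ maps ideal triangles on $a,b,c$ to ideal triangles on $a,b,c$---avoiding your explicit Hausdorff-distance bookkeeping.
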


\begin{proof}
Suppose $g$ has three distinct fixed points $a, b, c\in \partial_MX$. Choose some Morse gauge $N$ such that all geodesics with both endpoints in the set $\{a, b, c\}$ are $N$-Morse.
Given a constant $K\ge 0$, consider the set $E_K(a, b, c)=\{x\in X$| $x$ lies within $K$ of all sides of some ideal triangle $\triangle(a, b, c)$\}.
By Lemma 2.5 in \cite{charney2018quasi}, for any $K\ge \delta_N$, $E_K(a, b, c)$ is non-empty and has bounded diameter $L$ depending only on $N$ and $K$.

Now let $K=\delta_N$ and choose $p\in E_{\delta_{N}}(a, b, c)$, where $\delta_N$ is the constant from the Lemma \ref{slim 1}.
Since $a, b, c$ are fixed by $g$, then $g^{n}(p)\in E_{\delta_{N}}(a, b, c)$ for any $n\in \bbZ$.
But $E_{\delta_{N}}(a, b, c)$ has bounded diameter and the action of $\langle g\rangle$ on $X$ is proper, so $g$ must be finite order. We get a contradiction.
\end{proof}

Given a Morse isometry $g\in Isom(X)$ where X is a proper geodesic metric space, note that $g$ has no fixed points in $X$. The next lemma tells us that $g$ has two distinct fixed points on $\partial_MX$.
\begin{lem}\label{char M}
Let $X$ be a proper geodesic metric space and $x_0\in X$ be a basepoint. 
Let $g : X\to X$ be a Morse isometry.
There exists a bi-infinite Morse geodesic $\gamma$ such that $g$ has two distinct fixed points $\gamma(\infty),\gamma(-\infty)\in\partial _MX$.

\end{lem}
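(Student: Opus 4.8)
The plan is to construct $\gamma$ as a limit of the geodesic segments $[x_{-n}, x_n]$ where $x_n = g^n(x_0)$, and then verify that its endpoints are fixed by $g$. Since $g$ is a Morse isometry, the bi-infinite concatenation $\eta_{-\infty}^{\infty} = \bigcup_{k\in\bbZ} g^k(\eta)$ (with $\eta = [x_0, x_1]$) is an $N$-Morse $(\lambda,\epsilon)$-quasi-geodesic for some Morse gauge $N$ and constants $\lambda,\epsilon$. By Lemma \ref{seg M}, each finite sub-path $\eta_{-n}^{n}$ is a uniformly Morse quasi-geodesic, so the geodesic $[x_{-n}, x_n]$ lies in a uniformly bounded neighborhood of $\eta_{-n}^n$ and, conversely, $\eta_{-n}^n$ lies in a uniformly bounded neighborhood of $[x_{-n},x_n]$; in particular each $[x_{-n}, x_n]$ is $N'$-Morse for a fixed gauge $N'$ depending only on $N,\lambda,\epsilon$ (using that a geodesic with the same endpoints as a Morse quasi-geodesic is Morse, which follows from Lemma \ref{slim 1} applied to the degenerate triangle, or directly from the Morse property). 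Because $d(x_n, x_0)\ge n\mu \to\infty$ and similarly $d(x_{-n},x_0)\to\infty$, while the midpoint-type points stay in a bounded set (the segments all pass within bounded distance of $x_0$ since $x_0$ is within bounded distance of $\eta_{-n}^n$), Corollary \ref{aa2} applies: after passing to a subsequence, $[x_{-n}, x_n]$ converges uniformly on compact sets to a bi-infinite geodesic $\gamma:(-\infty,\infty)\to X$. A standard argument shows $\gamma$ is $N''$-Morse for a uniform gauge $N''$ (a quasi-geodesic with endpoints on $\gamma$ is approximated by quasi-geodesics with endpoints on the $[x_{-n},x_n]$, which are uniformly Morse), so $\gamma(\infty)$ and $\gamma(-\infty)$ define points of $\partial_M X$.

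Next I would show these two boundary points are distinct. This is where the quasi-isometric embedding hypothesis is essential: since $\eta_{-\infty}^{\infty}$ is a $(\lambda,\epsilon)$-quasi-geodesic, for the positive ray $\eta_0^{\infty}$ and negative ray $\eta_{-\infty}^0$ we have $d(x_n, x_{-n}) \ge \tfrac{1}{\lambda}(2n)\cdot(\text{length of }\eta\text{ per step}) - \epsilon$, which grows linearly; more carefully, restricting to the quasi-geodesic parametrization, $d(\gamma$-approximants at $\pm t)$ grows linearly in $t$, so $\gamma(\infty)\neq\gamma(-\infty)$ as points of the Morse boundary (two rays with finite Hausdorff distance cannot diverge linearly). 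Alternatively, $\gamma$ itself is a bi-infinite Morse geodesic, hence its two ends are distinct points of $\partial_M X$ essentially by definition of a bi-infinite geodesic connecting two boundary points together with the visibility/uniqueness discussion in Remark \ref{rmk1}.

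Finally I would check that $g$ fixes $\gamma(\pm\infty)$. Apply $g$ to everything: $g([x_{-n},x_n]) = [x_{-n+1}, x_{n+1}]$, which is the $(n{+}1,-n{+}1)$-segment, and this sequence also converges (along the same subsequence, up to a further subsequence) uniformly on compact sets to $g(\gamma)$, a bi-infinite geodesic. But the Hausdorff distance between $[x_{-n+1},x_{n+1}]$ and $[x_{-n},x_n]$ is uniformly bounded — both lie within bounded neighborhoods of the common quasi-geodesic $\eta_{-\infty}^{\infty}$ — so $d_{\mathcal H}(g(\gamma), \gamma) < \infty$. Hence $g(\gamma)$ is a bi-infinite geodesic at finite Hausdorff distance from $\gamma$, so by Lemma \ref{E G} (applied to the two sub-rays) $g(\gamma(\infty)) = \gamma(\infty)$ and $g(\gamma(-\infty)) = \gamma(-\infty)$ in $\partial_M X$.

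The main obstacle I anticipate is the bookkeeping around uniform Morse gauges: one must be careful that all the segments $[x_{-n},x_n]$, the translates $g([x_{-n},x_n])$, and the limit $\gamma$ are Morse with a single gauge depending only on $g$ and $x_0$, and that "converges uniformly on compact sets" genuinely transfers the Morse property to the limit. This is routine given Lemma \ref{seg M}, Lemma \ref{slim 1}, and Corollary \ref{aa2}, but it is the step where all the preliminary lemmas get used and where an imprecise argument could hide a gap. The distinctness of the two fixed points, while intuitively clear from linear divergence, also deserves a clean one-line justification via the quasi-isometric embedding property built into the definition of a Morse isometry.
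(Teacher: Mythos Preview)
Your proposal is correct and follows essentially the same route as the paper: build $\gamma$ as an Arzel\`a--Ascoli limit of the uniformly Morse segments $[x_{-n},x_n]$ via Corollary~\ref{aa2}, transfer the Morse gauge to the limit, and show $g(\gamma)$ stays at finite Hausdorff distance from $\gamma$ by comparing both to the quasi-geodesic $\eta_{-\infty}^{\infty}$. The only cosmetic differences are that the paper bounds $d(\gamma(t),g(\gamma(t)))$ pointwise via $\eta_{-\infty}^{\infty}$ rather than passing through the shifted segments $[x_{-n+1},x_{n+1}]$, and it closes by citing Lemma~\ref{<3} to identify these as \emph{the} fixed points, whereas you (correctly, for the stated lemma) stop once $\gamma(\pm\infty)$ are shown to be fixed and distinct.
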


\begin{proof}
Suppose that the bi-infinite piecewise geodesic $\eta_{-\infty}^{\infty}$ is  an $N$-Morse $(\lambda,\epsilon)$-quasi-geodesic.
 By Lemma 2.5 (3) in \cite{CS2014} we know there exists $C$ depending only on $N, \lambda$ and $\epsilon$ such that the geodesic $[x_i, x_j]$ has Hausdorff distance at most $C$ from the quasi-geodesic $\eta_{i}^{j}$. With Lemma \ref{seg M} in mind, the quasi-geodesic $\eta_{i}^{j}$ is $M_1$-Morse where $M_1$ depends only on $N, \lambda$ and $\epsilon$. By Lemma 2.5 (1) in \cite{CS2014} we get that any geodesic between $x_i$ and $x_j$ is $M'$-Morse where $M'$ depends only on $M_1$ and $C$. 

 Let $\gamma_n$ be a geodesic segment from $x_{-n}$ to $x_{n}$. Let $\gamma_n(0)$ be some point in $\gamma_n$ such that $d(x_0, \gamma_n)=d(x_0, \gamma_n(0))$. Since $\gamma_n$ is $M'$-Morse and $\eta_{-n}^{n}$ is a $(\lambda,\epsilon)$-quasi-geodesic, so $d(x_0, \gamma_n)\le M'(\lambda, \epsilon)$. This implies that the set $\{\gamma_n(0)\}$ is bounded. By Corollary \ref{aa2}, the sequence $\gamma_n$ has a subsequence that converges uniformly on compact sets to a geodesic $\gamma:(-\infty, \infty)\to X$.  It is $M'$-Morse since every $\gamma_n$ is $M'$-Morse.

From the above proof, we have $d_{\mathcal{H}}(\gamma_n, \eta_{-n}^{n})\le C$. 
Since a subsequence of $(\gamma_n)$ converges uniformly on compact sets to $\gamma$ and the sequence $\eta_{-n}^{n}$ converges uniformly on compact sets to $\eta_{-\infty}^{\infty}$, we get  $d_{\mathcal{H}}(\gamma, \eta_{-\infty}^{\infty})\le C$. Thus, for any $t\in \bbR$, there exists $p_t\in \eta_{-\infty}^{\infty}$ such that $d(\gamma(t), p_t)\le C$. 
Note that by the triangle inequality $d(p_t, g(p_t))\le d(x_0, g(x_0))$. This implies that $d(\gamma(t), g(\gamma(t)))\le 2C+d(x_0, g(x_0))$ for all $t\in \bbR$. Note that the action of $\langle g \rangle$ is proper. By Lemma \ref{<3}, the fixed points of $g$ are exactly $\gamma(\infty)$ and $\gamma(-\infty)$.			 
\end{proof}

From the above proof, we can see that for a Morse isometry $g$ there exists a Morse gauge $N$ such that the geodesic $[g^{i}(x_0), g^{j}(x_0)]$ is $N$-Morse for every $i, j\in \bbZ$. The following theorem says that the converse is true in a finitely generated group. Now let us give a characterization of Morse elements.

\begin{thm}\label{M in G}
 Let $G$ be a group acting geometrically on a proper geodesic metric space $X$. Let $g\in G$ be an infinite order element. Then the following are equivalent:
\begin{enumerate}
\item 
 $Fix_{\partial_MX}(g)$ is nonempty.

\item  
For some $x_0\in X$ (hence for any $x_0\in X$), there exists a Morse gauge $N_0$ depending only on $x_0$ and $g$ such that the geodesic $[x_0, g^{k}(x_0)]$ is $N_0$-Morse for any $k\in \bbZ$.

\item
Let $\eta =[x_0, g(x_0)]$. The bi-infinite path $\eta_{-\infty}^{+\infty}=\bigcup_{k\in \bbZ} g^{k}(\eta)$ is a Morse quasi-geodesic. In particular, the element $g$ is Morse.

\item
 $Fix_{\partial_MX}(g)$ has two distinct points.
\end{enumerate}
\end{thm}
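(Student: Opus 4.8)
The plan is to prove the theorem by establishing the cycle of implications $(2)\Rightarrow(3)\Rightarrow(4)\Rightarrow(1)\Rightarrow(2)$. The implications $(3)\Rightarrow(4)$ and $(4)\Rightarrow(1)$ are essentially already in hand: $(3)\Rightarrow(4)$ is exactly Lemma \ref{char M} (a Morse isometry has two distinct fixed points $\gamma(\pm\infty)$ on $\partial_MX$, noting that since $G$ acts geometrically, hence properly, $\langle g\rangle$ acts properly, which is the hypothesis needed there), and $(4)\Rightarrow(1)$ is trivial. So the real content is $(2)\Rightarrow(3)$ and $(1)\Rightarrow(2)$, and this is where the hypothesis that $G$ acts \emph{geometrically} (in particular cocompactly) must be used, since $(1)\Rightarrow(2)$ is false for a general proper action.

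For $(2)\Rightarrow(3)$: assume $[x_0,g^k(x_0)]$ is $N_0$-Morse for all $k$. Set $x_n = g^n(x_0)$ and $L = d(x_0,g(x_0))$, and let $\eta$ be the concatenation $\eta_{-\infty}^{+\infty}=\bigcup_k g^k([x_0,x_1])$. First I would check $\eta$ is a quasi-geodesic: since the subpath from $x_i$ to $x_j$ has length $\le |i-j|L$ and lies (being built from $N_0$-Morse segments glued along a slim-triangle argument, cf. Lemma \ref{slim 1}) uniformly close to the $N_0$-Morse geodesic $[x_i,x_j]$, one gets a lower bound $d(x_i,x_j)\ge c|i-j| - c'$; combined with Lemma \ref{<3}-style properness one sees the orbit map $\bbZ\to X$ is a quasi-isometric embedding, hence $\eta$ is a $(\lambda,\epsilon)$-quasi-geodesic for constants depending only on $x_0$ and $g$. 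Then I would show $\eta$ is Morse: given a $(\lambda',\epsilon')$-quasi-geodesic $\beta$ with endpoints on $\eta$, extend it by initial and terminal segments to a quasi-geodesic $\tilde\beta$ from some $x_i$ to some $x_j$ (controlled extension, changing the constants by at most $4L$); since $[x_i,x_j]$ is $N_0$-Morse, $\tilde\beta$ stays in a uniform neighborhood of $[x_i,x_j]$, which in turn (by the bounded Hausdorff distance established above and Lemma \ref{seg M}) stays in a uniform neighborhood of $\eta_i^j\subset\eta$. This is exactly the bracketed argument sketched but commented out in the proof of Lemma \ref{char M}, so it can be carried out cleanly.

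For $(1)\Rightarrow(2)$: suppose $p\in Fix_{\partial_MX}(g)$, represented by an $N$-Morse geodesic ray $\alpha$ from $x_0$. Since $g$ fixes $p$, the ray $g(\alpha)$ (based at $x_1=g(x_0)$) is asymptotic to $\alpha$, and by the Equivalent Geodesics Lemma \ref{E G} (applied after moving basepoints by a bounded amount, using Lemma \ref{close M}) the geodesics $g^k(\alpha)$ are all $N_1$-Morse for a single $N_1$ depending only on $N$ and $L$, and uniformly fellow-travel $\alpha$. Now the triangle $\triangle(x_0, x_k, p)$ has two sides $[x_0,p]=\alpha$ and $[x_k,p]=g^k(\alpha)$ that are $N_1$-Morse; by Lemma \ref{slim 1} the third side $[x_0,x_k]$ is $N_0$-Morse for $N_0$ depending only on $N_1$, hence only on $N$ and $L$, i.e.\ only on $x_0$ and $g$. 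That gives (2) for this particular $x_0$; independence of basepoint follows from Lemma \ref{close M} (changing $x_0$ moves each $g^k(x_0)$ by a bounded amount only if the action is cocompact — this is where geometric action enters, guaranteeing a uniform bound on how far the new basepoint is from the $G$-orbit of the old one, so the relevant displacements are controlled).

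The main obstacle I anticipate is the $(1)\Rightarrow(2)$ step, specifically getting a \emph{single} Morse gauge $N_1$ for the entire family $\{g^k(\alpha)\}_{k\in\bbZ}$ rather than a gauge that degrades with $k$: one must verify that fixing the ideal point $p$ forces $d(\alpha(t), g^k(\alpha)(t))$ to stay bounded independently of $k$, which requires iterating the $k=1$ fellow-traveling estimate and checking the constants do not accumulate — this uses that $g$ is an isometry so the estimate at each step is literally the same, and Lemma \ref{E G}'s constant $C_N$ depends only on the gauge. Once that uniformity is secured, the slim-triangle lemma does the rest, and the basepoint-independence clause of (2) is then a bookkeeping matter handled by cocompactness together with Lemma \ref{close M}.
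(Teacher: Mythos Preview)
Your cycle of implications matches the paper's, and $(3)\Rightarrow(4)$, $(4)\Rightarrow(1)$ are handled correctly. But you have misplaced where the geometric action is needed, and this leads to a real gap in $(2)\Rightarrow(3)$ and an imaginary obstacle in $(1)\Rightarrow(2)$.

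For $(1)\Rightarrow(2)$: the ``obstacle'' you anticipate is not there. Since $g^k$ is an isometry, $g^k(\alpha)$ is $N$-Morse with \emph{exactly the same} gauge $N$ as $\alpha$ --- there is nothing to iterate and no constants to accumulate. You never need the rays $g^k(\alpha)$ to fellow-travel $\alpha$; Lemma~\ref{slim 1} only asks that the two sides $[x_0,q)$ and $g^k[x_0,q)=[g^k(x_0),q)$ of the ideal triangle be $N$-Morse, which they are automatically. This is exactly the paper's argument, in one line. Likewise, basepoint independence does not use cocompactness: $d(g^k(x_0),g^k(x_0'))=d(x_0,x_0')$ since $g^k$ is an isometry, so Lemma~\ref{close M} applies directly. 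Your claim that ``$(1)\Rightarrow(2)$ is false for a general proper action'' is itself false --- a parabolic isometry of $\mathbb{H}^2$ satisfies both (1) and (2).

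The genuine gap is in $(2)\Rightarrow(3)$, and this is where cocompactness is actually used. You need $d(x_0,g^n(x_0))\ge cn$ for some $c>0$, i.e.\ positive translation length, and your sketch does not establish this: knowing that $\eta_i^j$ lies close to the Morse geodesic $[x_i,x_j]$ gives no lower bound on $d(x_i,x_j)$, and Lemma~\ref{<3} (an upper bound on fixed points) is not relevant. The parabolic example above shows that (2) alone, even with properness, does not force the concatenation to be a quasi-geodesic. The paper deals with this by passing to the Cayley graph of $G$ (legitimate since the action is geometric and the Morse boundary is a quasi-isometry invariant) and then invoking the argument of \cite[Proposition~3.2]{alonso1991notes}, which uses the uniform thinness of the triangles $\triangle(x_0,g^i(x_0),g^j(x_0))$ (supplied here by Lemma~\ref{thin}) together with the discreteness of the word metric to force positive translation length for infinite-order elements. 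You should either reproduce that argument or cite it, but in any case recognize that this step --- not basepoint independence --- is where the geometric hypothesis does its work.
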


\begin{proof}
$(1)\Rightarrow(2)$
Fix a basepoint $x_0\in X$.
Let $q\in \partial_MX_{x_0}^N$ be one fixed point of $g$ for some Morse gauge $N$. Since $g$ is an isometry of $X$, we have an ideal triangle $\triangle (x_0, g^{k}(x_0), q)$ with two $N$-Morse sides $[x_0, q]$ and $g^{k}[x_0, q]$. By Lemma \ref{slim 1}, the side $[x_0, g^{k}(x_0)]$ is $N_0$-Morse for all $k\in\bbZ$, where $N_0$ depends only on $x_0$ and the action of $g$.

$(2)\Rightarrow(3)$
Suppose that $X$ is the Cayley graph of $G$ for some finite generating set. It is enough to show that the bi-infinite path $\eta_{-\infty}^{+\infty}$ is a Morse quasi-geodesic. 
The proof that $\eta_{-\infty}^{+\infty}$ is a quasi-geodesic follows the proof of Proposition 3.2 in \cite{alonso1991notes}. They use the property of thin triangles in hyperbolic space. In our case,  all triangles $\triangle(x_0, g^i(x_0), g^j(x_0))$ are $\delta$-thin for some constant $\delta$ by Lemma \ref{thin}, where $\delta$ depends only on $N_{0}$.
To see it is Morse, note that $[g^{i}(x_0), g^{j}(x_0)]$ is $N_0$-Morse and the path $\eta_{-\infty}^{+\infty}$ is a quasi-geodesic.  It follows that the path $\eta_{-\infty}^{+\infty}$ is Morse by definition.

$(3)\Rightarrow(4)$
This is Lemma  \ref{char M}.

$(4)\Rightarrow(1)$ This is trivial.

\end{proof}

For a Morse isometry $g$, its fixed points in $\partial_MX$ are denoted by $\{g^+, g^-\}$. We call $g^{+}$ and $g^{-}$ $poles$ or $rational$ $points$ of the Morse isometry $g$.
All Morse geodesics connecting $g^-$ and $g^+$ are called $axes$ of $g$.
They have a uniform Morse gauge depending only on the two fixed points of $g$ in $\partial_MX$ by Remark \ref{rmk1}. 
We say $g$ is $N$-$Morse$ if all its axes are $N$-Morse.
A finitely generated group $G$ is called $non$-$elementary$ $Morse$ if $\partial_MG$ is nonempty and $G$ is not virtually cyclic.

\section{The topology of the Morse boundary}
Before studying the dynamics of the Morse boundary, let us study its topology.

\begin {defn}\label{conv 1}
Let $X$ be a proper geodesic space and $x_0$ be a basepoint.
We say a sequence of points $p_n\in X$ $converges$ to a point $p\in \partial_MX$ with respect to $x_0$ (denoted by $p_n\to_{x_0} p$) if there exists some Morse gauge $N$ and some geodesic $[x_0, p_n]$ for any $n$ such that

(1) The geodesic $[x_0, p_n]$ is $N$-Morse.

(2) Every subsequence of $[x_0, p_n]$ has a subsequence that converges uniformly on compact set to a geodesic ray $\gamma$ with $\gamma(0)=x_0, \gamma(\infty)=p$.

\end{defn}

By the properties of Morse geodesics, the notion does not depend on the choice of geodesics $[x_0, p_n]$.
The next proposition says that the defination does not depend on the choice of basepoint. So we will use the notation $p_n\to p$. 

\begin{prop}
Let $X$ be a proper geodesic space and $x_0$ be a basepoint. Let $p_n\in X$ be a sequence of points and $p\in \partial_MX$ with $p_n\to_{x_0}p$. Then $p_n\to_{x}p$ for any $x\in X$.
\end{prop}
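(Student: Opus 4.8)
The plan is to take, for each $n$, a geodesic $[x,p_n]$ and verify that the family $\big([x,p_n]\big)_n$ satisfies the two conditions of Definition \ref{conv 1} relative to the basepoint $x$. Set $C=d(x_0,x)$. Applying Lemma \ref{close M} to the geodesics $[x_0,p_n]$ and $[x,p_n]$ — whose pairs of endpoints are at distance $d(x_0,x)=C$ and $0$ — I obtain a Morse gauge $N'$ and a constant $D$, both depending only on $N$ and $C$, such that each $[x,p_n]$ is $N'$-Morse and $d_{\mathcal{H}}\big([x_0,p_n],[x,p_n]\big)\le D$ for all $n$. This yields condition (1) for the basepoint $x$ with the uniform gauge $N'$, and the uniform Hausdorff bound is what will drive the verification of condition (2).

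Next I would record that $d(x,p_n)\to\infty$: condition (2) for $p_n\to_{x_0}p$ precludes any subsequence along which $d(x_0,p_n)$ stays bounded (such a subsequence could not subconverge uniformly on compact sets to an infinite geodesic ray), so $d(x_0,p_n)\to\infty$, and hence $d(x,p_n)\ge d(x_0,p_n)-C\to\infty$. Now fix an arbitrary subsequence of $\big([x,p_n]\big)_n$. First, using condition (2) for $p_n\to_{x_0}p$, pass to a further subsequence along which $[x_0,p_n]$ converges uniformly on compact sets to a geodesic ray $\gamma$ with $\gamma(0)=x_0$ and $\gamma(\infty)=p$. Then, using Corollary \ref{aa1} (legitimate since $d(x,p_n)\to\infty$), pass to a still further subsequence along which $[x,p_n]$ converges uniformly on compact sets to a geodesic ray $\gamma'$ with $\gamma'(0)=x$; being a limit of $N'$-Morse geodesics, $\gamma'$ is $N'$-Morse. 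What remains is to identify $\gamma'(\infty)$ with $p$, i.e.\ to show $d_{\mathcal{H}}(\gamma,\gamma')<\infty$.

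For that final step I would pass to the limit in the uniform bound $d_{\mathcal{H}}\big([x_0,p_n],[x,p_n]\big)\le D$ along the chosen subsequence: for each $t\ge 0$ the point $[x_0,p_n](t)$ lies within $D$ of some point of $[x,p_n]$, whose parameter stays bounded (the endpoints $x_0$, $x$ being fixed), so in the proper space $X$ a subsequential limit of these points exists, lies on $\gamma'$, and is within $D$ of $\gamma(t)$; a symmetric argument bounds the distance from points of $\gamma'$ to $\gamma$. Hence $d_{\mathcal{H}}(\gamma,\gamma')\le D<\infty$, so $\gamma'$ is Hausdorff-equivalent to $\gamma$ and $\gamma'(\infty)=p$. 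This gives condition (2) for the basepoint $x$, and therefore $p_n\to_x p$. The only point requiring genuine care is this last limiting argument — the lower semicontinuity of Hausdorff distance under uniform-on-compacts convergence of geodesics in a proper space — while the rest is bookkeeping with Lemma \ref{close M}, Corollary \ref{aa1}, and the definition of convergence.
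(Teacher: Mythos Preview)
Your proposal is correct and follows essentially the same approach as the paper: apply Lemma \ref{close M} to obtain a uniform Morse gauge $N'$ and a uniform Hausdorff bound $D$ between $[x_0,p_n]$ and $[x,p_n]$, extract subsequential limits via Arzel\`a--Ascoli, and pass the Hausdorff bound to the limit to identify the endpoint of the limiting ray based at $x$ with $p$. Your write-up is slightly more careful in places (explicitly noting $d(x,p_n)\to\infty$ and spelling out the limiting argument for the Hausdorff distance), and the order in which you pass to subsequences is swapped relative to the paper, but the argument is the same.
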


\begin{proof}
Suppose that the geodesic $[x_0, p_n]$ is $N$-Morse. Fix a point $x\in X$. By Lemma \ref{close M}, we know for any $n\in\bbN$, the geodesic $[x, p_n]$ is $N'$-Morse where $N'$ depends only on $N$ and $d(x_0, x)$. 
By the Arzelà-Ascoli theorem, every subsequence of $[x, p_n]$ has a subsequence $[x, p_{n_i}]$ that converges uniformly on compact set to some geodesic ray $\alpha(t)$.
Since $[x, p_n]$ is $N'$-Morse, the geodesic ray $\alpha$ is $N'$-Morse and $\alpha(0)=x$. 
Since $p_n\to_{x_0}p$, then $[x_0, p_{n_i}]$ has a subsequence that converges uniformly on compact set to some Morse geodesic ray $\gamma$ with $\gamma(0)=x_0, \gamma(\infty)=p$.
By Lemma \ref{close M}, $d_{\mathcal H}([x_0, p_n], [x, p_n])$ is bounded by a constant $D$ which depends only on $N, d(x, x_0)$.
So the same holds for $d_{\mathcal H}(\gamma, \alpha)$. We conclude that $\alpha(\infty)=p$.

\end{proof}

\label{conv 2}
Similarly as in Definition \ref{conv 1}, suppose that $p_n, p\in \partial_M{X}$. We say a sequence of points $p_n$ $converges$ to $p$ with respect to $x_0$ if there exist $N$-Morse geodesic rays $[x_0, p_n)$ for all $n$ and any subsequence of $[x_0, p_n)$ has a subsequence that converges uniformly on compact sets to some $N$-Morse geodesic ray $\gamma$ such that $\gamma(\infty)=p$. Since this notation also does not depend on choice of basepoint, we denote it by $p_n\to p$.

Note that the sequence $p_n\to p$ if and only if  $p_n$ converges to $p$ in the topology of $\partial_M^{N}X_{x_0}$ for some Morse gauge $N$ and some basepoint $x_0$. This implies $p_n$ converges to $p$ in the topology of $\partial_MX_{x_0}$. The following lemma says that the converse is true.

\begin{lem}\label{uni conv}
Suppose that $p_n, p\in \partial_MX_{x_0} $ and the sequence of points $p_n$ converges to $p$ in the topology of $\partial_MX_{x_0}$. Then there exists a Morse gauge $N$ such that $p_n, p\in \partial_M^{N}X_{x_0}$ and $p_n$ converges to $p$ in the topology of $\partial_M^{N}X_{x_0}$.
\end{lem}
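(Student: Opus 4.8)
The plan is to exploit the fact that the direct limit topology is determined by the stratified pieces $\partial_M^NX_{x_0}$, together with compactness of each piece and the monotone exhaustion $\partial_MX_{x_0}=\bigcup_N\partial_M^NX_{x_0}$. First I would reduce to the case where the whole sequence $\{p_n\}$ lives in a single stratum. Since $p\in\partial_MX_{x_0}$, there is a Morse gauge $N_0$ with $p\in\partial_M^{N_0}X_{x_0}$; enlarging $N_0$ we may also assume $p$ is represented by an $N_0$-Morse ray based at $x_0$. The subtlety is that a priori the $p_n$ could require larger and larger Morse gauges, i.e.\ there need be no common $N$ containing all of them. So the heart of the argument is to rule this out.

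The key step is the following claim: if no single Morse gauge $N$ contains cofinitely many of the $p_n$, then $\{p_n\}$ does not converge to $p$ in the direct limit topology. To prove the claim, suppose for contradiction that for every Morse gauge $N$, only finitely many $p_n$ lie in $\partial_M^NX_{x_0}$. Pass to a subsequence $p_{n_k}$ with the property that $p_{n_k}\notin\partial_M^{N}X_{x_0}$ for $N$ an increasing sequence of gauges exhausting $\mathcal M$ — more precisely, arrange the subsequence so that for each fixed gauge $N$, all but finitely many terms $p_{n_k}$ lie outside $\partial_M^NX_{x_0}$. Now I would build an open set $U\subset\partial_MX_{x_0}$ containing $p$ but omitting this subsequence, contradicting convergence. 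The set $U=\partial_MX_{x_0}\setminus\{p_{n_k}:k\in\bbN\}$ is a candidate: I must check it is open and contains $p$. It contains $p$ provided $p\ne p_{n_k}$ for all $k$, which holds after discarding the (finitely many, by the convergence hypothesis in the direct-limit topology applied with the open set $\partial_MX_{x_0}\setminus\{p\}$... actually one must be slightly careful here; alternatively just discard any $p_{n_k}$ equal to $p$). For openness in the direct limit topology I must show $U\cap\partial_M^NX_{x_0}$ is open for every $N$, i.e.\ that $\{p_{n_k}:k\}\cap\partial_M^NX_{x_0}$ is closed in $\partial_M^NX_{x_0}$. By construction this intersection is finite, and since $\partial_M^NX_{x_0}$ is Hausdorff (it is a compact metrizable space by the properties recorded after Theorem \ref{MT of C}), a finite set is closed. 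Hence $U$ is open, $p\in U$, yet $U$ misses infinitely many $p_n$, contradicting $p_n\to p$. This proves the claim: there is a single Morse gauge $N$ with $p_n\in\partial_M^NX_{x_0}$ for all but finitely many $n$, and after enlarging $N$ so that $N\ge N_0$ and discarding finitely many terms (harmless for convergence), we get $p_n,p\in\partial_M^NX_{x_0}$ for all $n$.

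Finally I would upgrade convergence in $\partial_MX_{x_0}$ to convergence in $\partial_M^NX_{x_0}$. This is a general topological fact: the inclusion $\iota:\partial_M^NX_{x_0}\hookrightarrow\partial_MX_{x_0}$ is continuous, but we need the reverse — that a sequence in $\partial_M^NX_{x_0}$ converging in the larger space already converges in the smaller. Here I would use that $\partial_M^NX_{x_0}$ is compact: suppose $p_n\not\to p$ in $\partial_M^NX_{x_0}$; then there is an open $V\subset\partial_M^NX_{x_0}$ containing $p$ and a subsequence $p_{n_k}\notin V$. By compactness, pass to a further subsequence $p_{n_{k_j}}\to q$ in $\partial_M^NX_{x_0}$ for some $q\in\partial_M^NX_{x_0}\setminus V$, so $q\ne p$. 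But convergence in $\partial_M^NX_{x_0}$ implies convergence in $\partial_MX_{x_0}$, so $p_{n_{k_j}}\to q$ there as well; since $\partial_MX_{x_0}$ is Hausdorff (being $T_1$ suffices, and it is Hausdorff since each stratum is and the limit is direct) and also $p_{n_{k_j}}\to p$, we get $q=p$, a contradiction. Hence $p_n\to p$ in $\partial_M^NX_{x_0}$, completing the proof.

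The main obstacle I anticipate is the claim in the second paragraph — ruling out that the $p_n$ escape every stratum. The argument above hinges on knowing each $\partial_M^NX_{x_0}$ is Hausdorff (so finite subsets are closed) and on the precise meaning of the direct limit topology ($U$ open iff $U\cap\partial_M^NX_{x_0}$ open for all $N$); both are recorded in the excerpt, so the plan should go through, but some care is needed in the bookkeeping of subsequences and in confirming that the relevant separation axioms for the direct limit are available (alternatively, one can sidestep Hausdorffness of $\partial_MX_{x_0}$ entirely, since the only place it is used — concluding $q=p$ in the last paragraph — can instead be derived from the $\partial_M^NX_{x_0}$-level Hausdorffness once we know both $p$ and $q$ lie in that stratum).
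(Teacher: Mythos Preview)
Your first step---ruling out that the $p_n$ escape every stratum---is exactly the paper's argument: produce a subsequence meeting each $\partial_M^NX_{x_0}$ in a finite set, observe that this subsequence is therefore closed in the direct limit, and obtain a contradiction with $p_n\to p$.

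For the upgrade to convergence inside a single $\partial_M^NX_{x_0}$, you take a different route than the paper. The paper works concretely with geodesic-ray representatives: once all $p_n$ lie in some $\partial_M^{N'}X_{x_0}$, it chooses rays and invokes Lemma~\ref{E G} to bound their Morse gauges uniformly by some $N''$, then reads off convergence in $\partial_M^{\max(N',N'')}X_{x_0}$ directly. Your compactness-plus-unique-limits argument is cleaner general topology, but the workaround you propose for Hausdorffness of $\partial_MX_{x_0}$ is circular as written: to use Hausdorffness of $\partial_M^NX_{x_0}$ to conclude $q=p$, you would need $p_{n_{k_j}}\to p$ \emph{in} $\partial_M^NX_{x_0}$, which is precisely the statement you are proving. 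The correct patch is to note that $S=\{p_{n_{k_j}}:j\}\cup\{q\}$ is compact in $\partial_M^NX_{x_0}$; since the inclusions between strata are continuous maps into compact Hausdorff targets, $S$ is closed in every $\partial_M^{N'}X_{x_0}$ and hence closed in $\partial_MX_{x_0}$. Its complement is then an open neighbourhood of $p$ missing the whole subsequence, giving the contradiction. This is the same mechanism as your first step with ``compact'' replacing ``finite'', and it is exactly the content of Lemma~\ref{sub top}, which the paper proves immediately after this lemma and which does not depend on it.
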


\begin{proof}
We will follow the proof of Lemma 3.4 in \cite{Murray}. Let $\gamma_n=[x_0, p_n), \gamma=[x_0, p)$ be Morse geodesics and let $\gamma$ be $N_0$-Morse.
Suppose that $\gamma_n$ and $\gamma$ are not contained in $\partial_M^{N}X_{x_0}$ for any $N$.
Then we can choose a subsequence $\gamma_{n_i}$ of $N_i$-Morse geodesic rays , where $N_{i}\ge N_{i-1}+1$ for all $i$ and $\gamma_{n_i}$ is not $N_{i-1}$-Morse. 
Let $P=\{\gamma_{n_i}\}_{i\ge 1}$. Note that for all $N$, $P\cap \partial_M^NX_{x_o}$ is finite, so it is closed in $\partial_M^NX_{x_0}$. It follows that $P$ is closed in $\partial_MX_{x_0}$. So $p_n$ can not converge to $p$ in $\partial_MX_{x_0}$. We conclude that $p_n, p\in \partial_M^{N'}X_{x_0}$ for some $N'$.

Since $p_n\to p$ in $\partial_MX_{x_0}$, by the universal properties of direct limits, 
there exist geodesic rays $\alpha_n=[x_0, p_n)$ such that any subsequence of $\alpha_n$ contains a subsequence that converges uniformly on compact sets to some geodesic ray $\alpha=[x_0, p)$.
Note that all geodesics $\alpha_n, \alpha$ are $N''$-Morse for some $N''$, where $N''$ depends only on $N'$ by Lemma \ref{E G}. 
Since $p_n, p$ are all in $\partial_M^{N'}X_{x_0}$, this implies that $p_n\to p$ in $\partial_M^NX_{x_0}$, where $N=\max\{N', N''\}$.

\end{proof}

Now let us study the topology of $\partial_M^{N}X_{x_0}$.
Note that we have a natural inclusion map from $\partial_M^{N}X_{x_0}$ to $\partial_MX_{x_0}$. The next Lemma says that the induced subspace topology agrees with  the topology of $\partial_M^{N}X_{x_0}$.

\begin{lem}\label{sub top}
Let $B$ be a closed subset in $\partial_M^{N}X_{x_0}$. Then $B$ is a closed subset in $\partial_MX_{x_0}$. 
In particular, the topology of $\partial_M^{N}X_{x_0}$ is the same as the subspace topology of $\partial_MX_{x_0}$.
\end{lem}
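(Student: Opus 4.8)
The plan is to treat this as an instance of the standard fact that a directed colimit of compact spaces along continuous injections into Hausdorff spaces is assembled from closed topological embeddings; the only point needing care is that an arbitrary Morse gauge $N'$ need not be comparable with $N$, and this is dealt with by passing to $\max\{N,N'\}$.

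First I would collect the inputs coming from Cordes's construction \cite{Cordes}: for every Morse gauge $N$ the space $\partial_M^N X_{x_0}$ is compact and metrizable (in particular Hausdorff); whenever $N\le M$ in the partial order on $\mathcal{M}$ the natural inclusion $\partial_M^N X_{x_0}\hookrightarrow \partial_M^M X_{x_0}$ is continuous (every $N$-Morse ray is also $M$-Morse, and these inclusions are precisely the bonding maps of the direct system defining $\partial_MX_{x_0}$); and $\mathcal{M}$ is directed, since the pointwise maximum of two Morse gauges is again a Morse gauge dominating both.

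Then for the main argument I would proceed as follows. Let $B$ be closed in $\partial_M^N X_{x_0}$, so $B$ is compact. By definition of the direct limit topology, to prove $B$ is closed in $\partial_MX_{x_0}$ it suffices to show $B\cap \partial_M^{N'}X_{x_0}$ is closed in $\partial_M^{N'}X_{x_0}$ for every Morse gauge $N'$. Fix $N'$ and set $M=\max\{N,N'\}$, so $N\le M$ and $N'\le M$. Since the inclusion $B\hookrightarrow \partial_M^M X_{x_0}$ is continuous, $B$ is compact, and $\partial_M^M X_{x_0}$ is Hausdorff, $B$ is closed in $\partial_M^M X_{x_0}$. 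Likewise the inclusion $\partial_M^{N'}X_{x_0}\hookrightarrow \partial_M^M X_{x_0}$ is a continuous injection from a compact space into a Hausdorff space, hence a topological embedding, so the subspace topology it induces on $\partial_M^{N'}X_{x_0}$ coincides with the given one. Therefore $B\cap \partial_M^{N'}X_{x_0}$, being the intersection of the closed subset $B$ of $\partial_M^M X_{x_0}$ with $\partial_M^{N'}X_{x_0}$, is closed in $\partial_M^{N'}X_{x_0}$; and since $B\subseteq \partial_M^N X_{x_0}\subseteq \partial_M^M X_{x_0}$, this is the same intersection whether computed in $\partial_M^M X_{x_0}$ or in $\partial_MX_{x_0}$. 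As $N'$ was arbitrary, $B$ is closed in $\partial_MX_{x_0}$.

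For the ``in particular'' clause I would note that the reverse inclusion of topologies is immediate from the definition of the direct limit: if $C$ is closed in $\partial_MX_{x_0}$ then $C\cap \partial_M^N X_{x_0}$ is closed in $\partial_M^N X_{x_0}$. Combined with the first part (every $B$ closed in $\partial_M^N X_{x_0}$ is closed in $\partial_MX_{x_0}$ and equals $B\cap\partial_M^N X_{x_0}$), this identifies the closed sets of $\partial_M^N X_{x_0}$ with the traces on $\partial_M^N X_{x_0}$ of the closed sets of $\partial_MX_{x_0}$, which is the claim. I expect the one genuinely delicate input to be the continuity of the bonding inclusions $\partial_M^N X_{x_0}\hookrightarrow \partial_M^M X_{x_0}$ for $N\le M$; this is built into Cordes's setup, but if one wanted to reprove it here the argument would run through the fundamental systems of neighborhoods $V^N_n(\alpha)$ together with Lemma \ref{E G}, which bounds by a constant depending only on $N$ the distance between any two $N$-Morse representatives of a class based at $x_0$. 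Everything else is purely formal.
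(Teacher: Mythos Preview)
Your proof is correct but takes a different route from the paper. The paper argues directly with sequences: given $x_n\in B\cap\partial_M^{N'}X_{x_0}$ converging to $x$ in $\partial_M^{N'}X_{x_0}$, it picks $N'$-Morse representatives $\gamma_n\to\gamma$ and separate $N$-Morse representatives $\alpha_n$ of the same $x_n$, applies Arzel\`a--Ascoli to extract a limiting $N$-Morse ray $\alpha$, and then uses the uniform Hausdorff bound between $\alpha_n$ and $\gamma_n$ (Lemma~\ref{E G}) to conclude $\alpha(\infty)=x$, hence $x\in\partial_M^N X_{x_0}$ and, since $B$ is closed there, $x\in B$. Your argument instead leverages purely formal facts: each stratum is compact Hausdorff, the bonding maps are continuous, and the index set is directed via $\max$; from this the result drops out without touching a single geodesic. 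Your approach is cleaner and makes transparent that the lemma is really a general statement about direct limits of compact Hausdorff spaces; the paper's approach, by contrast, is self-contained in the sense that it does not need to quote continuity of the bonding maps as a black box (indeed, its sequential argument essentially reproves that the inclusion $\partial_M^N X_{x_0}\hookrightarrow\partial_M^{N'}X_{x_0}$ reflects limits), and it stays within the geometric toolkit used throughout the paper.
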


\begin{proof}
It is enough to show that $B\cap\partial_M^{N'}X_{x_0}$ is closed in $\partial_M^{N'}X_{x_0}$ for all $N'$. Suppose it is nonempty. Choose $x_n\in B\cap\partial_M^{N'}X_{x_0}$ and suppose that $x_n\to x$ in $\partial_M^{N'}X_{x_0}$. There exist $N'$-Morse geodesic rays $\gamma_n$ and $\gamma$ with $\gamma_n(0)=x_0, \gamma_n(\infty)=x_n$ and $\gamma(0)=x_0, \gamma(\infty)=x$.
Also there exist $N$-Morse geodesic rays $\alpha_n$ with $\alpha_n(0)=0, \alpha_n(\infty)=x_n$. 
By the Arzelà-Ascoli theorem, any subsequence of $\alpha_n$ contains a subsequence that converges uniformly on compact sets to a geodesic ray $\alpha$ with $\alpha(0)=x_0$. 
The geodesic ray $\alpha$ is $N$-Morse since all of the $\alpha_n$ are $N$-Morse. Note that the Hausdorff distance between $\gamma_n$ and $\alpha_n$ is bounded by some constant $C$ depending only on $N$ and $N'$. 
So is the Hausdorff distance between $\alpha$ and $\gamma$. 
This means $\alpha(\infty)=x$ and $x_n\to x\in \partial_M^{N}X_{x_0}$. 
Since $B$ is a closed set in $\partial_M^{N}X_{x_0}$, this implies that $x\in B\cap\partial_M^{N'}X_{x_0}$. We conclude that $B\cap\partial_M^{N'}X_{x_0}$ is closed in $\partial_M^{N'}X_{x_0}$.

\end{proof}

The compact sets in $\partial _MX$ are studied in Cordes and Durham's paper \cite{cordes2016boundary}. 

\begin{lem}[Lemma 4.1 in \cite{cordes2016boundary}]\label{cpt in M}
Let $K$ be a compact subset in $\partial_MX_{x_0}$. Then there exists $N$ such that $K\subset \partial_M^{N}X_{x_0}$. 
\end{lem}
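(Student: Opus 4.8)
The plan is to argue by contradiction, producing from a hypothetical "unbounded" compact set a closed discrete infinite subset, which contradicts compactness. Suppose $K\subset\partial_MX_{x_0}$ is compact but $K\not\subset\partial_M^NX_{x_0}$ for every Morse gauge $N$. Fixing any cofinal sequence $N_1\le N_2\le\cdots$ in the poset $\mathcal M$ of Morse gauges (with $N_{i+1}$ strictly dominating $N_i$, say $N_{i+1}\ge N_i+1$), the failure of containment lets me choose, for each $i$, a point $p_i\in K$ that does not lie in $\partial_M^{N_i}X_{x_0}$; in particular the $p_i$ are pairwise distinct and no $N_i$-Morse representative geodesic ray exists for $p_i$. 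Set $P=\{p_i\mid i\ge 1\}$.

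The key step is to show $P$ is closed in $\partial_MX_{x_0}$. By the very definition of the direct limit topology it suffices to show $P\cap\partial_M^NX_{x_0}$ is closed in $\partial_M^NX_{x_0}$ for each Morse gauge $N$. Given $N$, pick $i_0$ with $N_{i_0}\ge N$ (possible since the chosen sequence is cofinal). For $i\ge i_0$ the point $p_i$ has no $N_i$-Morse representative, hence no $N$-Morse representative, so $p_i\notin\partial_M^NX_{x_0}$. Therefore $P\cap\partial_M^NX_{x_0}\subset\{p_1,\dots,p_{i_0-1}\}$ is a finite set, hence closed (points are closed, since $\partial_M^NX_{x_0}$ is compact Hausdorff — indeed metrizable). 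This is essentially the argument used inside the proof of Lemma \ref{uni conv}, just repackaged for a compact target rather than a convergent sequence; I expect this to be the main (mild) obstacle, namely making sure the cofinality of the chosen chain of Morse gauges is invoked correctly so that "eventually outside $\partial_M^NX_{x_0}$" holds for every single $N$, not just for the $N$'s in the chain.

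Once $P$ is closed in $\partial_MX_{x_0}$, so is every subset of $P$ (by the same finiteness-of-intersections argument, or simply because $P$ is an infinite closed discrete set: any subset $S\subseteq P$ has $S\cap\partial_M^NX_{x_0}$ finite, hence closed, for all $N$). Thus $P$ is an infinite closed discrete subspace of the compact space $K$ — for instance the cover of $K$ by the open set $\partial_MX_{x_0}\setminus\{p_j\mid j\ne 1\}$ together with $\partial_MX_{x_0}\setminus\{p_1\}$, extended over all $i$, has no finite subcover. This contradicts compactness of $K$. Therefore $K\subset\partial_M^NX_{x_0}$ for some Morse gauge $N$, as claimed.
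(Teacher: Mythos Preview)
The paper does not supply its own proof of this lemma; it is quoted verbatim as Lemma~4.1 of Cordes--Durham \cite{cordes2016boundary} and simply cited. Your argument is exactly the style of argument the paper itself uses in the proof of Lemma~\ref{uni conv} (which you explicitly invoke), transplanted from a convergent sequence to a compact set, so in spirit you are doing what the paper would do.

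There is, however, one point that is not as mild as you suggest. You begin by ``fixing any cofinal sequence $N_1\le N_2\le\cdots$ in $\mathcal M$'', but the poset $\mathcal M$ of all functions $[1,\infty)\times[0,\infty)\to[0,\infty)$ under pointwise order has \emph{no} countable cofinal chain: given any $\{N_i\}_{i\ge 1}$, the gauge $N(\lambda,\epsilon)=N_{\lceil\lambda\rceil}(\lambda,\epsilon)+1$ is dominated by no $N_i$. So the opening move fails as written, and the issue you flag --- getting ``eventually outside $\partial_M^NX_{x_0}$'' for \emph{every} $N$, not just those in your chain --- is precisely the content of the lemma, not a technicality to be swept up afterward. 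The inductive variant (choose $p_{i+1}\in K\setminus\partial_M^{N_i}X_{x_0}$, then enlarge $N_i$ to some $N_{i+1}$ containing $p_{i+1}$) has the same defect: an arbitrary $N$ need not sit below any $N_i$, so finiteness of $P\cap\partial_M^NX_{x_0}$ is not automatic. The paper's own proof of Lemma~\ref{uni conv} skates over the identical point, which is presumably why the present lemma is outsourced to \cite{cordes2016boundary} rather than reproved. A clean fix requires either an argument that the family of strata $\{\partial_M^NX_{x_0}\}$ (as opposed to $\mathcal M$ itself) admits a countable cofinal subfamily, or a direct appeal to the argument in Cordes--Durham.
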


\begin{rmk}\label{cpt rmk}
From the two lemmas above, we know that if $K$ is a compact in $\partial_MX_{x_0}$, then $K$ is a compact set in $\partial_M^{N}X_{x_0}$ for some $N$.
\end{rmk}

Now we characterize when the Morse boundary of a group is compact. In the case of the contracting boundary of a $CAT(0)$ group, the reader can see Theorem 5.1 in \cite{Murray}.
\begin{thm}\label{cpt boundary}
Let $G$ be a group that is finitely generated and non-elementary Morse. Consider its Cayley graph $X$ with respect to a fixed finite generating set. The following are equivalent:

\begin{enumerate}
\item $G$ is hyperbolic.
\item $\partial_MX$ is compact.
\item The Morse gauges are uniform i.e. $\partial_MX_{x_0}\subset\partial_M^{N}X_{x_0}$ for some $N$.
\item For a point $p\in\partial_MX_{x_0}$, the orbits $Gp\subset\partial_M^{N}X_{x_0}$ for some $N$.

\item Geodesic segments in $X$ are uniformly Morse.
\item Geodesic rays in $X$ are uniformly Morse.
\end{enumerate}
\end{thm}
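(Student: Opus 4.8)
The plan is to establish the cycle of implications $(1)\Rightarrow(5)\Rightarrow(6)\Rightarrow(3)\Rightarrow(2)\Rightarrow(1)$, together with $(3)\Leftrightarrow(4)$, which is enough to close the loop. The implication $(1)\Rightarrow(5)$ is the classical Morse lemma: in a $\delta$-hyperbolic space every $(\lambda,\epsilon)$-quasi-geodesic stays within a uniform distance $N(\lambda,\epsilon)$ of any geodesic with the same endpoints, so every geodesic segment is $N$-Morse for one fixed gauge $N$. For $(5)\Rightarrow(6)$ one approximates a geodesic ray by longer and longer geodesic segments and applies Corollary \ref{aa1} together with the fact (as used repeatedly, e.g.\ in the proof of Lemma \ref{char M}) that a uniform-on-compacts limit of $N$-Morse geodesics is $N$-Morse. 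The implication $(6)\Rightarrow(3)$ is essentially a tautology given the definition of $\partial_M X_{x_0}$: if every geodesic ray from $x_0$ is $N_0$-Morse, then every point of $\partial_M X_{x_0}$ is represented by an $N_0$-Morse ray, so $\partial_M X_{x_0}=\partial_M^{N_0}X_{x_0}$.

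Next, $(3)\Rightarrow(2)$: if $\partial_M X_{x_0}=\partial_M^{N}X_{x_0}$, then the direct limit stabilizes and the Morse boundary carries the topology of the single space $\partial_M^{N}X_{x_0}$, which is compact by the remarks following Theorem \ref{MT of C}. The key step is $(2)\Rightarrow(1)$, and this is where I expect the main obstacle. The strategy, following Murray's Theorem 5.1, is contrapositive: assume $G$ is not hyperbolic and produce a non-compact $\partial_M X$. If $\partial_M X$ were compact, Lemma \ref{cpt in M} gives $\partial_M X_{x_0}\subset\partial_M^{N}X_{x_0}$ for some $N$, i.e.\ $(3)$ holds, hence by the chain above $(3)\Rightarrow(2)$... so it suffices to show $(2)\Rightarrow(3)$ and $(3)\Rightarrow(1)$. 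For $(3)\Rightarrow(1)$: since $G$ is non-elementary Morse, $\partial_M G$ is nonempty, so there is at least one Morse direction; if all geodesic rays (equivalently all geodesic segments, by a limiting argument run backwards) are uniformly $N$-Morse, then $X$ is $\delta$-hyperbolic — the uniform Morse property of all geodesics forces the thin-triangles condition via Lemma \ref{thin} applied to arbitrary triangles (all of whose sides are now automatically $N$-Morse), giving a uniform thinness constant, hence hyperbolicity. Finally $(2)\Rightarrow(3)$ is Lemma \ref{cpt in M} applied to $K=\partial_M X_{x_0}$ itself once we know it is compact.

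For the equivalence $(3)\Leftrightarrow(4)$: $(3)\Rightarrow(4)$ is immediate since $Gp\subset\partial_M X_{x_0}=\partial_M^{N}X_{x_0}$. For $(4)\Rightarrow(3)$ one uses minimality (Theorem \ref{min}): since $G$ is non-elementary Morse, $Gp$ is dense in $\partial_M G$ for any $p$; if $Gp\subset\partial_M^{N}X_{x_0}$ and $\partial_M^{N}X_{x_0}$ is closed in $\partial_M X_{x_0}$ (Lemma \ref{sub top}), then $\partial_M G=\overline{Gp}\subset\partial_M^{N}X_{x_0}$, giving $(3)$. The main obstacle, as noted, is the non-hyperbolic-implies-non-compact direction: one must be careful that "not hyperbolic" genuinely produces rays of unbounded Morse gauge rather than merely failing some single thinness estimate, and this requires invoking that $X$ is a Cayley graph (so that quasi-isometry invariance of $\partial_M X$ and the equivalence of various hyperbolicity conditions on $X$ are available), exactly as in the $CAT(0)$ case treated by Murray.
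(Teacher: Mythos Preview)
Your cycle has a genuine gap at the step $(3)\Rightarrow(1)$. Condition $(3)$ says only that every \emph{Morse} geodesic ray based at $x_0$ is equivalent to an $N$-Morse one; it says nothing about geodesic rays that are not Morse to begin with. When you write ``if all geodesic rays (equivalently all geodesic segments) are uniformly $N$-Morse'' you are assuming $(6)$, not $(3)$. A non-hyperbolic space can in principle have plenty of non-Morse rays and segments while the Morse ones happen to be uniformly Morse; nothing in your argument rules this out. This is exactly the obstacle you flag in your last paragraph, and it is not resolved by Lemma \ref{thin}, which only applies once you already know the sides of the triangle are Morse.

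The paper bridges this gap by passing through $(4)$ and using the group action: for any $g\in G$ the triangle $\triangle(x_0,\,g(x_0),\,g(p))$ has two $N$-Morse sides, namely $g[x_0,p)$ (an isometric translate of an $N$-Morse ray) and $[x_0,g(p))$ (which is $N$-Morse by hypothesis $(4)$). Lemma \ref{slim 1} then forces $[x_0,g(x_0)]$ to be $N_1$-Morse with $N_1$ depending only on $N$, and since every vertex of the Cayley graph is some $g(x_0)$, this gives $(5)$ and hence hyperbolicity. This triangle trick is the essential use of the group action, and your argument has no substitute for it.

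Separately, your proof of $(4)\Rightarrow(3)$ via Theorem \ref{min} is circular: the paper's proof of Theorem \ref{min} invokes Theorem \ref{cpt boundary} itself (to conclude that a globally fixed boundary point forces $G$ to be hyperbolic and hence virtually cyclic). You therefore cannot appeal to minimality here. The paper instead goes $(3)\Rightarrow(4)$ trivially and $(4)\Rightarrow(5)$ by the triangle argument above, avoiding any forward reference.
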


\begin{proof}
$(1)\Rightarrow(2)$ It follows Theorem \ref{MT of C}.\\
$(2)\Rightarrow(3)$ This follows Lemma \ref{cpt in M}.\\
$(3)\Rightarrow(4)$ It is trivial.\\
$(4)\Rightarrow(5)$ For any $g\in G$, the point $g(p)\in\partial_M^{N}X_{x_0}$. Consider the triangle with vertices $x_0, g(x_0) ,g(p)$. Note that $g[x_0, p]$ and $[x_0, g(p)]$ are  $N$-Morse. By Lemma \ref{slim 1}, the side $[x_0, g(x_0)]$ is $N_1$-Morse where $N_1$ depends only on $N$. We are done.\\
$(5)\Rightarrow(1)$ By Lemma \ref{slim 1}, all triangles are uniformly slim.\\
$(5)\Rightarrow(6)$ It follows from the definition of a Morse geodesic ray.\\
$(6)\Rightarrow(3)$ It is trivial.\\
\end{proof}

Now we can discuss the cardinality of the Morse boundary.  The following corollary answers Question 3.17 in \cite{cordes2016boundary}. It says that a group is  non-elementary Morse if and only if its Morse boundary contains infinitely many points. 

\begin{cor}\label{02infinity}
Let $G$ be a finitely generated group. The cardinality of its Morse boundary $\partial_MG$ is either $0$, $2$ or infinity.
\end{cor}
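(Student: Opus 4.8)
The plan is to use the structural results already established to rule out every cardinality except $0$, $2$, and infinity. First I would observe that if $\partial_MG$ is empty we are in the case of cardinality $0$, so assume $\partial_MG \neq \emptyset$. If $G$ is virtually cyclic (equivalently, not non-elementary Morse), then $G$ contains a finite-index infinite cyclic subgroup $\langle g\rangle$ acting geometrically, hence properly, on the Cayley graph $X$; by Lemma \ref{<3} this action has at most two fixed points on $\partial_MX$, and one checks that every Morse ray is asymptotic to an axis of $g$ (the orbit $\langle g\rangle x_0$ is coarsely dense along a Morse quasi-geodesic line), so $\partial_MG$ consists of exactly the two poles $g^\pm$. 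Thus in the virtually cyclic case the cardinality is $2$ (or $0$, if no Morse direction exists, but a virtually-$\bbZ$ group does have a Morse line, so it is exactly $2$).

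Next, I would treat the case where $G$ is non-elementary Morse, and show that then $\partial_MG$ is infinite. Suppose for contradiction that $\partial_MG$ is finite, say with points $p_1,\dots,p_k$. A finite set is automatically compact, so by Lemma \ref{cpt in M} there is a Morse gauge $N$ with $\partial_MG_{x_0} = \partial_M^N G_{x_0}$; in particular condition (3) of Theorem \ref{cpt boundary} holds, so $G$ is hyperbolic and $\partial_MG = \partial G$ is the usual visual boundary. A hyperbolic group with finite visual boundary of cardinality $\geq 1$ is elementary: if the boundary has one point the group is virtually $\bbZ$ or finite, and a nonempty boundary forces the group to be infinite, hence virtually $\bbZ$; if it has $\geq 2$ points then it has a pair of distinct points joined by a bi-infinite geodesic, and the standard ping-pong / axis argument produces a hyperbolic element whose powers give infinitely many distinct endpoints unless the boundary is infinite. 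Either way we contradict $G$ being non-elementary Morse. Therefore $\partial_MG$ cannot be finite and positive of size other than $2$, i.e. it is infinite.

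Combining the two cases: $\partial_MG$ is empty (cardinality $0$), or $G$ is virtually cyclic with a Morse direction (cardinality $2$), or $G$ is non-elementary Morse (cardinality infinite). This exhausts all possibilities, proving the corollary.

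The main obstacle I anticipate is the clean handling of the virtually cyclic case — specifically, verifying that for $G$ virtually $\bbZ$ every equivalence class of Morse rays is asymptotic to one of the two poles of a generator $g$ of a finite-index cyclic subgroup. This requires knowing that the $\langle g\rangle$-orbit is coarsely dense in $X$ (which follows from finite index) and that a Morse ray, having to fellow-travel a quasi-geodesic line up to bounded Hausdorff distance, must have one of the two ends of that line as its endpoint; this is where Lemma \ref{slim 1} (slimness of triangles with two Morse sides) and the visibility property of Theorem \ref{MT of C} do the work. The hyperbolic-group input in the non-elementary case is classical and I would simply cite it.
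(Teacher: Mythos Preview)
Your proposal is correct and shares the paper's core idea: a finite nonempty Morse boundary is compact, so Theorem~\ref{cpt boundary} forces $G$ to be hyperbolic, and then the classical trichotomy for boundaries of hyperbolic groups gives $|\partial_MG|\in\{0,2,\infty\}$. The paper's proof is literally those three lines, with no case-split.

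Your separate treatment of the virtually cyclic case is an unnecessary detour. You introduce it because Theorem~\ref{cpt boundary} is stated under the hypothesis ``non-elementary Morse'', but the implication $(2)\Rightarrow(1)$ you actually need only uses that $\partial_MG\neq\emptyset$ (trace $(2)\Rightarrow(3)\Rightarrow(4)\Rightarrow(5)\Rightarrow(1)$ in its proof). Even more simply: a virtually-$\bbZ$ group is already hyperbolic, so once you know $\partial_MG$ is finite and nonempty, both of your cases land on ``$G$ is hyperbolic'' and can be merged. This lets you delete the coarse-density argument via Lemma~\ref{<3} that you flag as your main obstacle --- it works, but it is the long way around. (Incidentally, your aside ``if the boundary has one point the group is virtually $\bbZ$ or finite'' describes a case that never occurs for hyperbolic groups; nonempty boundary already forces at least two points.)
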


\begin{proof}

Suppose that $|\partial_MG|=n$ for some $n\ne0, \infty$.  It means that $\partial_MG$ is compact. By Theorem \ref{cpt boundary} $(2)$, the group $G$ is hyperbolic. So $n$ must be $2$.
\end{proof}

\section{Dynamics of the action on The Morse boundary}
In this section we will study topological dynamics of a group action on the Morse boundary. Most of following results have roots in the case of a hyperbolic space. Murray \cite{Murray} used dynamical methods to obtain many properties of the topological dynamics on the contracting boundary of a CAT(0) space. Since the Morse boundary is a generalization of the contracting boundary, it is interesting to ask which of them is true in the Morse boundary.

It is well-known that the action of a non-elementary hyperbolic group on its boundary is minimal. Murray \cite[Theorem 4.1] {Murray} showed this also holds in the case of the contracting boundary of a CAT(0) space. We generalize it to the Morse boundary.

\begin{thm}\label{min}
If a finitely generated group $G$ is non-elementary Morse, then the action of $G$ on $\partial_MG$ is minimal, that is for any $p\in \partial_MG$ the orbit $Gp$ is dense in $\partial_MG$.
\end{thm}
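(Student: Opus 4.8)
The plan is to use the classical ping-pong / convergence argument, leveraging the Key Lemma (Lemma \ref{key 1}) and the fact that a non-elementary Morse group contains a Morse element with a pair of distinct poles. First I would fix an arbitrary $p\in\partial_MG$ and an arbitrary open set $U\subset\partial_MG$, and aim to show $Gp\cap U\neq\emptyset$. The starting observation is that since $G$ is non-elementary Morse, $\partial_MG$ is infinite (Corollary \ref{02infinity}), so $G$ is not virtually cyclic; I would want to produce, inside $U$, the attracting pole $h^{+}$ of some Morse element $h\in G$. This requires knowing that poles of Morse elements are dense in $\partial_MG$ — I expect the paper has (or I would insert) a lemma to that effect: given any point $q\in\partial_MG$ and any neighborhood $V^N_n(\alpha)$ of it, one finds a Morse element whose axis fellow-travels $\alpha$ long enough that its attracting pole lies in $V^N_n(\alpha)$. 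The construction is the standard one: take a Morse element $g$ (which exists since $\partial_MG\neq\emptyset$ and $G$ is infinite, e.g. via Theorem \ref{M in G} applied to an infinite-order element along a Morse ray), and conjugate: for a suitable group element $a$ sending a point near $g^{+}$ toward $q$, the element $aga^{-1}$ has pole $a g^{+}$ close to $q$; iterating/taking high powers makes this precise.

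Next, with $h^{+}\in U$ fixed (for a Morse element $h$ with distinct poles $h^{+}\neq h^{-}$), I would apply the Key Lemma to the sequence $g_n=h^{n}$. Indeed $h^{n}(x_0)\to h^{+}$ and $h^{-n}(x_0)=(h^{n})^{-1}(x_0)\to h^{-}$ in $\partial_MG$, both limits existing by Lemma \ref{char M} (the orbit of $x_0$ under $\langle h\rangle$ fellow-travels an axis of $h$). So by Lemma \ref{key 1}, for any $q\in\partial_MG$ with $q\neq h^{-}$ we get $h^{n}(q)\to h^{+}$. If our chosen $p$ happens to satisfy $p\neq h^{-}$, then for $n$ large $h^{n}(p)\in U$ and we are done since $h^{n}(p)\in Gp$. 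To handle the remaining case $p=h^{-}$, I would first move $p$ off the pole: since $\partial_MG$ is infinite, pick any $r\in\partial_MG$ with $r\neq h^{-}$, and — using that $G$ acts with sufficiently rich dynamics (again poles-density, or minimality-in-the-making) — find $\gamma\in G$ with $\gamma p\neq h^{-}$; concretely one can choose a Morse element $h'$ whose repelling pole differs from $p$, apply the argument with $h'$ and a target neighborhood, bootstrapping. Cleanly: fix two Morse elements $h_1,h_2$ with all four poles distinct (possible as $\partial_MG$ has more than $2$ points, by a genericity/ping-pong argument); then $p$ can avoid being the repelling pole of at least one of them, and we run the Key Lemma argument with that one, choosing $h_i$ with $h_i^{+}\in U$ — if $h_i^{+}\notin U$ we conjugate $h_i$ to push its attracting pole into $U$ as above.

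The cleanest organization avoiding case analysis: (i) show the set of attracting poles of Morse elements is dense in $\partial_MG$; (ii) given $p$ and $U$, since $\partial_MG$ is infinite choose a Morse element $h$ whose attracting pole $h^{+}$ lies in $U$ and whose repelling pole $h^{-}$ is different from $p$ (achievable by (i) plus the freedom to replace $h$ by a conjugate or by another Morse element sharing the same attracting behavior near a point of $U$ but with relocated repelling pole); (iii) apply Lemma \ref{key 1} to $g_n=h^{n}$: $h^{n}(x_0)\to h^{+}$, $h^{-n}(x_0)\to h^{-}$, hence $h^{n}(p)\to h^{+}\in U$; (iv) conclude $h^{n}(p)\in U\cap Gp$ for large $n$, so $Gp$ is dense, i.e. the action is minimal.

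The main obstacle I anticipate is step (ii) — ensuring simultaneously that $h^{+}\in U$ \emph{and} $h^{-}\neq p$. Density of attracting poles gives the first condition but a priori could force $h^{-}=p$ in pathological configurations; resolving this needs the non-virtually-cyclic hypothesis in an essential way, namely that there exist Morse elements with pairwise-distinct pole sets (a ping-pong lemma producing a free subsemigroup, or at least two independent Morse axes), so that the repelling pole can be steered away from the single bad point $p$. Establishing that "two independent Morse elements exist" from "$\partial_MG$ infinite and $G$ not virtually cyclic" is the technical heart; everything else is a direct invocation of Lemma \ref{char M} and Lemma \ref{key 1}.
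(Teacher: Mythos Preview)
Your approach has a genuine gap: you assume $G$ contains a Morse element, but this is not guaranteed by the hypotheses. The paper explicitly remarks (after Corollary \ref{dense 2}) that Fink \cite{Fink} constructed a torsion group with nonempty Morse boundary; such a group has no infinite-order elements, hence no Morse elements at all, yet Theorem \ref{min} must still apply to it. Your sentence ``take a Morse element $g$ (which exists since $\partial_MG\neq\emptyset$ and $G$ is infinite\ldots)'' is therefore unjustified --- Theorem \ref{M in G} characterizes \emph{which} infinite-order elements are Morse, it does not produce one. Moreover, even granting a Morse element, the density of poles that you need in step (i) is exactly Corollary \ref{dense 2}, which in the paper is \emph{deduced from} Theorem \ref{min}; using it to prove Theorem \ref{min} would be circular, and your sketch of an independent argument (``conjugate to push the pole toward $q$'') again presupposes you can already hit neighborhoods of arbitrary boundary points with orbit points, which is essentially what is to be proved.

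The paper's route bypasses Morse elements entirely by exploiting cocompactness. Given any target point $\alpha(\infty)\in\partial_MG$, cocompactness yields $g_n\in G$ with $d(g_n(x_0),\alpha(n))\le C$, so $g_n(x_0)\to\alpha(\infty)$ automatically; after a subsequence also $g_n^{-1}(x_0)\to p_2$ for some $p_2$. The Key Lemma then gives $g_n(q)\to\alpha(\infty)$ whenever $q\neq p_2$; if $q=p_2$, one replaces $q$ by $h(q)$ for some $h\in G$ not fixing $q$. This establishes the dichotomy (Proposition \ref{dense 1}): either $q$ is a global fixed point of $G$, or $Gq$ is dense. A global fixed point means the orbit $Gp=\{p\}$ sits in a single stratum $\partial_M^N G$, so Theorem \ref{cpt boundary} forces $G$ hyperbolic, and a hyperbolic group with a global boundary fixed point is virtually cyclic --- contradicting the non-elementary hypothesis. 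No Morse element is ever invoked.
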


We postpone the proof of this theorem. From the above theorem, we can show that the set of rational points is either empty or dense in the Morse boundary.
\begin{cor}\label{dense 2}
If a finitely generated group $G$ is non-elementary Morse and contains a Morse element, then the set of rational points $Q(G):=\{ g^+|g\in G$ is a Morse element $\}$ is dense in $\partial_MG$.
\end{cor}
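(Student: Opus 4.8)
The plan is to deduce Corollary~\ref{dense 2} from Theorem~\ref{min} together with the existence of at least one Morse element, by exploiting the fact that the rational points $Q(G)$ form a $G$-invariant set. First I would observe that $Q(G)$ is nonempty: the hypothesis guarantees a Morse element $g_0\in G$, so $g_0^+\in Q(G)$. Next I would check $G$-invariance: for any $h\in G$ and any Morse element $g$, the conjugate $hgh^{-1}$ is again a Morse element (conjugation by an isometry of the Cayley graph carries an $N$-Morse quasi-geodesic axis of $g$ to an $N$-Morse quasi-geodesic axis of $hgh^{-1}$, using Lemma~\ref{seg M} and the fact that $h$ is a quasi-isometry; more directly, $h$ maps the bi-infinite path $\eta_{-\infty}^{+\infty}$ of $g$ to that of $hgh^{-1}$). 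Moreover $h$ sends the attracting fixed point $g^+$ to the attracting fixed point $(hgh^{-1})^+$, since $h$ conjugates the dynamics on $X\cup\partial_MX$; concretely, $h(g^n(x_0))=(hgh^{-1})^n(h(x_0))$ converges to $h(g^+)$, and this limit is a fixed point of $hgh^{-1}$, hence equals $(hgh^{-1})^+$. Therefore $h\cdot g^+\in Q(G)$, so $h\,Q(G)\subseteq Q(G)$ for every $h$, i.e. $Q(G)$ is $G$-invariant.

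Having established that $Q(G)$ is a nonempty $G$-invariant subset of $\partial_MG$, I would pick any point $p\in Q(G)$. By Theorem~\ref{min} the orbit $Gp$ is dense in $\partial_MG$, and by invariance $Gp\subseteq Q(G)$; since a set containing a dense set is dense, $Q(G)$ is dense in $\partial_MG$. This completes the argument.

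The main obstacle I anticipate is the invariance step, specifically verifying cleanly that $h$ carries $g^+$ to $(hgh^{-1})^+$ rather than to $(hgh^{-1})^-$ and that $hgh^{-1}$ is genuinely a Morse element with the expected poles. This is where one must use that $h$ acts as an isometry (hence a quasi-isometry of the Cayley graph preserving the generating-set metric), that it therefore preserves Morse gauges up to a controlled change (Lemma~\ref{E G}, Lemma~\ref{close M}), and that it commutes appropriately with the powers of $g$ via the identity $h g^n h^{-1} = (hgh^{-1})^n$, so the sequence defining the attracting pole of $g$ is pushed forward to the sequence defining the attracting pole of $hgh^{-1}$. Everything else is formal: nonemptiness is immediate from the hypothesis, and density follows at once from Theorem~\ref{min}. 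I would keep the write-up short, since the only real content is the routine check that the class of Morse elements and the assignment $g\mapsto g^+$ are $G$-equivariant.
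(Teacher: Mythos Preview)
Your proposal is correct and follows essentially the same approach as the paper: pick a Morse element $g$, observe that $h(g^+)=(hgh^{-1})^+$ with $hgh^{-1}$ Morse so that $Gg^+\subset Q(G)$, and then invoke Theorem~\ref{min} to conclude density. The paper states the equivariance identity $h(g^+)=(hgh^{-1})^+$ without further justification, so your discussion of that step is more detailed than the original but not a different route.
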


\begin{proof}
Consider the orbits $Gg^+$ where $g^+\in \partial_MG$ is a fixed point of the Morse element $g\in G$. The orbit $Gg^+ \subset Q(G)$ since $h(g^+)=(hgh^{-1})^{+}$ and $hgh^{-1}$ is Morse. So $Q(G)$ is dense in $\partial_MG$ since
$Gg^+$ is dense in $\partial_MG$.
\end{proof}

\begin{rmk}
In \cite{Fink}, she presents a torsion group which has nonempty Morse boundary. So we can not omit the assumption that $G$ contains a Morse element.
\end{rmk}

In hyperbolic groups, we have the classical North-South dynamics as follows. 

\begin{thm}
Let $G$ be a hyperbolic group acting geometrically on a proper geodesic space $X$. Let $g$ be an infinite order element. Then for all open sets $U$ and $V$ with $g^{+}\in U$ and $g^{-}\in V$, $g^n(V^{c})\in U$ for suffiently large $n\in \bbN$.
\end{thm}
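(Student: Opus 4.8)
The plan is to reduce the classical North--South statement to the weak North--South dynamics already available as Corollary \ref{weak N-S}, together with the compactness of the sets $\partial_M^NX_{x_0}$. First I would fix a basepoint $x_0 \in X$ and work throughout in $\partial_MX_{x_0}$, using that $g$ is an infinite order element of a hyperbolic group acting geometrically, hence a Morse element by Theorem \ref{M in G}, so that $g^+, g^-$ are well-defined and $g$ is $N$-Morse for some Morse gauge $N$. Given open sets $U \ni g^+$ and $V \ni g^-$, the complement $V^c$ is closed in $\partial_MX_{x_0}$; the key point is that I must produce a \emph{compact} set to feed into Corollary \ref{weak N-S}, because $V^c$ itself need not be compact. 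Here I would invoke Theorem \ref{cpt boundary}: since $G$ is hyperbolic, the Morse gauges are uniform, i.e.\ $\partial_MX_{x_0} = \partial_M^{N_1}X_{x_0}$ for some single gauge $N_1$, and by the main theorem of Cordes this space is compact. Hence $V^c$, being a closed subset of a compact space, is compact, and $g^- \notin V^c$.

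Next I would apply Corollary \ref{weak N-S} with the open neighborhood $U$ of $g^+$ and the compact set $K = V^c$ satisfying $g^- \notin K$: this yields an integer $n_0$ with $g^{n_0}(V^c) \subset U$. The remaining task is to upgrade "for some $n$" to "for all sufficiently large $n$". For this I would note that $g^{n_0}(V^c)$ is a compact subset of the open set $U$, so again by Corollary \ref{weak N-S} (applied with compact set $g^{n_0}(V^c)$, which avoids $g^-$ since $g^{n_0}(V^c)\subset U$ and we may shrink $U$ to avoid $g^-$, or simply observe $g^-\notin g^{n_0}(V^c)$ as $g^-\in V$), there is $m_0$ with $g^{m}(g^{n_0}(V^c))\subset U$ for all $m \geq m_0$; more directly, one shows that the set of $n$ with $g^n(V^c)\subset U$ is closed under adding large positive integers. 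In fact the cleanest route is: by weak North--South applied to the compact set $K=V^c\cup\overline{\{g^n(V^c): n\le n_0\}}$ — but that set may not be compact, so instead I would argue that once $g^{n_0}(V^c)\subset U$, iterating and using that $g^k(V^c)\subset V^c$ eventually fails in general, one must instead re-run Corollary \ref{weak N-S} finitely many times on the shrinking images; a clean phrasing is to take the compact set $g^{n_0}(V^c)\subset U$ and apply the corollary once more to get $g^m(g^{n_0}(V^c))\subset U$ for all $m\geq m_1$, giving $g^n(V^c)\subset U$ for all $n\geq n_0+m_1$.

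The main obstacle I anticipate is precisely the passage from the single-exponent conclusion of Corollary \ref{weak N-S} to the "all sufficiently large $n$" conclusion: Corollary \ref{weak N-S} as stated only produces one good power. The honest fix uses compactness twice, as sketched above: first to make $V^c$ compact (via hyperbolicity and Theorem \ref{cpt boundary}), and then to observe that $g^{n_0}(V^c)$ is a compact subset of $U$ not containing $g^-$ — indeed $g^-\in V$ so $g^-\notin V^c$, and since $g$ fixes $g^-$ we have $g^-\notin g^{n_0}(V^c)$ — so a second application of Corollary \ref{weak N-S} yields $g^m\big(g^{n_0}(V^c)\big)\subset U$ for all $m$ in an infinite set, and with a little care (choosing that set to be a full tail) for all $m\ge m_1$. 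Combining, $g^n(V^c)\subset U$ for every $n\ge n_0+m_1$, which is the assertion. All other steps are routine applications of results already established in the excerpt.
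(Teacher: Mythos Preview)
The paper does \emph{not} prove this statement: it is stated as the classical North--South dynamics for hyperbolic groups, quoted as background to motivate the weaker Corollary~\ref{weak N-S}. There is therefore no ``paper's proof'' to compare against; your task was implicitly to supply a proof from the paper's toolkit.

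Your reduction step is good: invoking Theorem~\ref{cpt boundary} to conclude that $\partial_MX_{x_0}$ is compact when $G$ is hyperbolic, hence that $V^c$ is a compact set not containing $g^-$, is exactly the right way to connect the classical statement to Corollary~\ref{weak N-S}.

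The gap is in the bootstrap from ``some $n$'' to ``all sufficiently large $n$''. Corollary~\ref{weak N-S}, as stated, produces a \emph{single} integer $n$ with $g^n(K)\subset U$. Your second application to $K'=g^{n_0}(V^c)$ therefore yields only \emph{one} further exponent $m$, giving $g^{n_0+m}(V^c)\subset U$; it does not give an infinite set of good $m$'s, let alone a full tail. The phrases ``for all $m$ in an infinite set'' and ``with a little care (choosing that set to be a full tail)'' are precisely where the argument breaks: nothing in the statement of Corollary~\ref{weak N-S} justifies either claim, and iterating the corollary finitely many times only produces finitely many good exponents.

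The clean fix is to bypass Corollary~\ref{weak N-S} and argue by contradiction directly from Theorem~\ref{con2 in M}. Suppose the conclusion fails; then there are $n_j\to\infty$ with $g^{n_j}(V^c)\not\subset U$. Set $h_j=g^{n_j}$. By Proposition~\ref{dy M1}, $h_j(x_0)\to g^+$ and $h_j^{-1}(x_0)\to g^-$. Since $V^c$ is compact (by your first step) and $g^-\notin V^c$, Theorem~\ref{con2 in M} applied to the sequence $(h_j)$ with $K=V^c$ gives, after passing to a subsequence, an index $k$ such that $h_j(V^c)\subset U$ for all $j>k$ --- contradicting the choice of the $n_j$. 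This delivers the full-tail conclusion in one stroke.
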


In the case of a CAT(0) space, rank-one isometries act on the visual boundary with this classical North-South dynamics \cite{hamenstadt2009rank}. However, if we consider the action on the contracting boundary of a proper CAT(0) space, this may fail for a rank-one isometry. Murry \cite{Murray} gave the following example.

\begin{eg}[\cite{Murray}]
Let $Y$ be the space $T^{2}\vee S^{1}$. Consider the fundamental group $\pi_{1}(Y)=\bbZ*\bbZ^{2}=\langle a\rangle* \langle b,c \mid [b,c]\rangle$. Let $\tilde{Y}$ be the universal cover. It is a proper $CAT(0)$ space and $\pi_{1}(Y)$ acts geometrically on $\tilde{Y}$. Let $\alpha$ be an axis for the rank-one isometry $a$. 
Let $\beta_{i}$ be the geodesic with respect to the word $a^{-i}b^{i}aaaaa\ldots$. Note that the sequence of geodesics $\{\beta_{i}\}$ does not converge to $\alpha(-\infty)$ in the contracting boundary $\partial_c\tilde{Y}$ and $\{\beta_i\}$ is closed in $\partial_c\tilde{Y}$.  The set $V=(U(\alpha(-\infty), r, \epsilon)\cap\partial_C\tilde{Y}) \backslash \{\beta_i\}$ is an open neighborhood of $\alpha(-\infty)$. However, we have $a^{n}\beta_n$ is not in $(U(\alpha(-\infty), r', \epsilon')$ for all $n\in \bbN$ and $r'>\epsilon'$.
\end{eg}

Murray \cite{Murray} proved a weaker version of North-South dynamics on the contracting boundary of a CAT(0) space. Additionally, he showed that a CAT(0) group $G$ acts on its contracting boundary like a convergence group. We have analogous results for the Morse boundary. The proofs are given in later subsections.

\begin{thm}\label{con2 in M}
Let $X$ be a proper geodesic space and $x_0$ be a basepoint. 
Let $\{g_n\}$ be a sequence of isometries of $X$.
Assume that $g_n(x_0)\to p_1$ in $\partial_MX_{x_0}$ and $g_n^{-1}(x_0)\to p_2$ in $\partial_MX_{x_0}$. 
Given any neighborhood $U$ of $p_1$ in $\partial_MX_{x_0}$ and any compact set $K\subset \partial_MX_{x_0}$ with $p_2\notin K$. There exists an integer $k$ such that for all $n>k$ we have $g_{n}(K)\subset U$ after passing to a subsequence.
\end{thm}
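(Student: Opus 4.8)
The plan is to derive Theorem \ref{con2 in M} from the Key Lemma (Lemma \ref{key 1}) by a compactness argument, essentially mimicking the way one deduces uniform convergence on compacta from pointwise convergence when the target space is compact. First I would invoke Lemma \ref{cpt in M} (and Remark \ref{cpt rmk}) to fix a Morse gauge $N$ with $K \subset \partial_M^N X_{x_0}$, so that $K$ is a compact subset of the metrizable, compact space $\partial_M^N X_{x_0}$ whose subspace topology agrees with that of $\partial_M X_{x_0}$ by Lemma \ref{sub top}. Since $p_2 \notin K$, every $q \in K$ satisfies $q \neq p_2$, so the Key Lemma applies to each such $q$: along any subsequence, $g_n(q) \to p_1$.

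The main step is to upgrade this pointwise statement to a uniform one over $K$. Suppose for contradiction that the conclusion fails: then for every $k$ there is $n_k > k$ and a point $q_k \in K$ with $g_{n_k}(q_k) \notin U$, and moreover this persists under passing to subsequences, so we may extract an honest subsequence (still indexed by $k$) with $q_k \to q_\infty \in K$ (using compactness of $K$) and $g_{n_k}(q_k) \notin U$ for all $k$. The target is to contradict the Key Lemma applied at $q_\infty$ (note $q_\infty \in K$, so $q_\infty \neq p_2$). Concretely, I would argue that $g_{n_k}(q_k)$ and $g_{n_k}(q_\infty)$ stay uniformly close in $\partial_M X_{x_0}$: since $q_k, q_\infty$ all lie in $\partial_M^N X_{x_0}$ and $q_k \to q_\infty$ there, by Lemma \ref{uni conv} the convergence happens at a fixed Morse gauge, and the geodesics $[x_0, q_k)$ converge uniformly on compacta to $[x_0, q_\infty)$; applying the isometry $g_{n_k}$ and comparing to the basepoint $x_0$ via Lemma \ref{close M} (the displacement $d(x_0, g_{n_k}(x_0))$ is controlled because $g_{n_k}(x_0) \to p_1$ forces the associated geodesics to be uniformly Morse, though the displacement itself grows — so here I must be careful and instead argue directly with triangles $\triangle(x_0, g_{n_k}(q_\infty), g_{n_k}(q_k))$ using Lemma \ref{slim 1} and Lemma \ref{seg M}). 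The cleaner route is: by the Key Lemma, $g_{n_k}(q_\infty) \to p_1$, hence eventually $g_{n_k}(q_\infty) \in U$; so it suffices to show $g_{n_k}(q_k)$ is eventually in $U$ as well, which follows if $g_{n_k}(q_k)$ and $g_{n_k}(q_\infty)$ eventually lie in a common basic neighborhood contained in $U$ — and that reduces, after choosing axes/geodesic representatives, to the uniform-on-compacta convergence $[x_0, q_k) \to [x_0, q_\infty)$ being preserved (up to bounded Hausdorff error) under the isometries $g_{n_k}$, together with the fact that the image geodesics form a triangle with $[x_0, g_{n_k}(x_0)]$ whose two long sides $g_{n_k}[x_0,q_k)$ and $g_{n_k}[x_0, q_\infty)$ are $N$-Morse.

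The hard part will be exactly this last coupling: transferring the uniform convergence $q_k \to q_\infty$ through the moving isometries $g_{n_k}$ while keeping all Morse gauges uniform. The point is that $g_{n_k}$ is an isometry, so $g_{n_k}[x_0, q_\infty)$ and $g_{n_k}[x_0, q_k)$ are still $N$-Morse geodesic rays, now based at the moving point $g_{n_k}(x_0)$; one re-bases them at $x_0$ using Lemma \ref{slim 1} applied to $\triangle(x_0, g_{n_k}(x_0), g_{n_k}(q_\infty))$, whose sides toward $g_{n_k}(x_0)$ are controlled because $g_{n_k}(x_0) \to p_1$ supplies (via Definition \ref{conv 1}) a uniform Morse gauge for $[x_0, g_{n_k}(x_0)]$. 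Once all four rays $[x_0, g_{n_k}(q_\infty))$, $[x_0, g_{n_k}(q_k))$, $[x_0, g_{n_k}(x_0)]$ are uniformly Morse, the bounded Fellow-traveling of $[x_0, q_k)$ with $[x_0, q_\infty)$ on the initial segment up to any fixed length $t$ is pushed forward to a bounded fellow-traveling of $[x_0, g_{n_k}(q_k))$ with $[x_0, g_{n_k}(q_\infty))$ on longer and longer initial segments (because $d(x_0, g_{n_k}(x_0)) \to \infty$), which is precisely what is needed to conclude $g_{n_k}(q_k) \in V_m^N(g_{n_k}(q_\infty)) \subset U$ for large $k$ once $g_{n_k}(q_\infty) \in U$. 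That final inclusion contradicts $g_{n_k}(q_k) \notin U$, completing the proof.
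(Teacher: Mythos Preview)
Your overall strategy is a legitimate alternative to the paper's, but it diverges from the paper's organization and your execution of the crucial step is incomplete.

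The paper does \emph{not} argue by contradiction.  It first proves a uniform ``neighborhood version'' of the Key Lemma (Lemma~\ref{key 2}): for each $q\neq p_2$ in $\partial_M^N X_{x_0}$ there is a basic neighborhood $V^N_{m'}(q)$ and a Morse gauge $N_1=N_1(N)$ such that, for any target neighborhood $V^{N_1}_m(p_1)$, one has $g_n(V^N_{m'}(q))\subset V^{N_1}_m(p_1)$ for all large $n$.  Theorem~\ref{con2 in M} then follows in one line: put $K$ inside $\partial_M^N X_{x_0}$ via Lemma~\ref{cpt in M}, cover $K$ by finitely many such $V^N_{m'}(s_i)$, and take the maximum of the corresponding thresholds.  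Your sequential--compactness/contradiction argument can be made to work, but what you call ``the hard part'' is exactly the content of Lemma~\ref{key 2}, and you end up reproving it inline.

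There is also a genuine gap in your sketch of that hard part.  You write that the fellow--travelling of $[x_0,q_k)$ with $[x_0,q_\infty)$ ``is pushed forward to a bounded fellow--traveling of $[x_0,g_{n_k}(q_k))$ with $[x_0,g_{n_k}(q_\infty))$ on longer and longer initial segments (because $d(x_0,g_{n_k}(x_0))\to\infty$)''.  The parenthetical reason is not sufficient: knowing $d(x_0,g_{n_k}(x_0))\to\infty$ does not by itself force $[x_0,g_{n_k}(q_k))$ to track $[x_0,g_{n_k}(x_0)]$ for a long time.  What you actually need is that in the $\delta_N$--slim triangle $\triangle(x_0,g_{n_k}(x_0),g_{n_k}(q_k))$ the branch point on the side $[x_0,g_{n_k}(x_0)]$ stays within bounded distance of $g_{n_k}(x_0)$ (not of $x_0$).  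Equivalently, after translating by $g_{n_k}^{-1}$, you need a point on $\beta_{n_k}=[x_0,g_{n_k}^{-1}(x_0)]$ at bounded distance from $x_0$ which is more than $\delta_N$ from $[x_0,q_k)$; this is exactly the step in the proof of Lemma~\ref{key 1}/\ref{key 2} where one fixes $b\in\beta=[x_0,p_2)$ with $d(b,\gamma)>2\delta_N+2$ and uses that \emph{$q_k$ stays uniformly away from $p_2$} (here because $q_k\to q_\infty\in K$ and $p_2\notin K$).  You never invoke this separation, and your argument as written would equally ``prove'' the false statement obtained by allowing $q_k\to p_2$.  Once you insert that step, your contradiction argument goes through --- but at that point you have essentially reproduced Lemma~\ref{key 2}, and the paper's finite--cover packaging is cleaner.
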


\begin{rmk}
In Theorem \ref{con2 in M}, under the assumption that $g_n(x_0)\to p_1$, by the Arzelà-Ascoli theorem, it is not hard to find a subsequence $g_{n_i}$ such that $g_{n_i}^{-1}(x_0)\to p_2$ for some $p_2\in \partial_MX_{x_0}$. 
\end{rmk}

\begin{cor}[Weak North-South dynamics for Morse isometries]\label{weak N-S}
Let $X$ be a proper geodesic space and $x_0$ be a basepoint. 
Let $g$ be a Morse isometry of $X$.
Given any open neighborhood $U$ of $g^{+}$ in $\partial_MX_{x_0}$ and any compact set $K\subset \partial_MX_{x_0}$ with $g^-\notin K$. There exists an integer $n$ such that $g^{n}(K)\subset U$.
\end{cor}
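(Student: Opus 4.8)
The plan is to obtain the corollary as a direct specialization of Theorem~\ref{con2 in M} to the sequence of powers $g_n:=g^n$. The only hypothesis of that theorem that needs checking is that this sequence converges correctly, i.e.\ that $g^n(x_0)\to g^+$ and $g^{-n}(x_0)\to g^-$ in $\partial_MX_{x_0}$. Granting this, Theorem~\ref{con2 in M} applied with $p_1=g^+$, $p_2=g^-$ (we have $g^-\notin K$ by hypothesis), the compact set $K$, and the open neighborhood $U$ of $g^+$, yields, after passing to a subsequence $(n_i)$, an index $k$ such that $g^{n_i}(K)\subset U$ for all $n_i>k$; any single such $n_i$ is then the integer $n$ claimed in the statement. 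The passage to a subsequence inside Theorem~\ref{con2 in M} is harmless here, since the corollary only asserts the existence of one exponent.

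So the substantive step is to verify $g^n(x_0)\to g^+$; the convergence $g^{-n}(x_0)\to g^-$ then follows by applying the same argument to the Morse isometry $g^{-1}$ in place of $g$ (for which the roles of the two poles are interchanged). For this I would reuse the estimates already assembled in the proof of Lemma~\ref{char M}. Write $x_n=g^n(x_0)$ and let $\eta_{-\infty}^{\infty}$ be the $N$-Morse $(\lambda,\epsilon)$-quasi-geodesic through $x_0$ built from the translates $g^i[x_0,g(x_0)]$. From that proof there are a Morse gauge $N_0$ and a constant $C$, depending only on $g$ and $x_0$, so that each geodesic $[x_0,x_n]$ is $N_0$-Morse with $d_{\mathcal{H}}\big([x_0,x_n],\eta_0^{n}\big)\le C$, and an $N_0$-Morse axis $\gamma$ of $g$ with $d_{\mathcal{H}}(\gamma,\eta_{-\infty}^{\infty})\le C$, so that the ray $\gamma|_{[0,\infty)}$ and the quasi-geodesic ray $\eta_0^{\infty}$ have finite Hausdorff distance and represent $g^+$.

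Now put $\alpha_n=[x_0,x_n]$. Since $g$ is a Morse isometry, $d(x_0,x_n)\ge n\mu\to\infty$, so by Corollary~\ref{aa1} every subsequence of $(\alpha_n)$ has a further subsequence converging uniformly on compact sets to a geodesic ray $\alpha$ with $\alpha(0)=x_0$, and $\alpha$ is $N_0$-Morse because every $\alpha_n$ is. Letting $n\to\infty$ along such a subsequence in the uniform bound $d_{\mathcal{H}}(\alpha_n,\eta_0^{n})\le C$, and using that $\eta_0^{n}$ converges uniformly on compact sets to $\eta_0^{\infty}$, gives $d_{\mathcal{H}}(\alpha,\eta_0^{\infty})\le C<\infty$; hence $\alpha(\infty)$ is the common endpoint of $\eta_0^{\infty}$ and $\gamma|_{[0,\infty)}$, which is $g^+$. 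By Definition~\ref{conv 1} this is exactly $x_n\to g^+$, completing the verification. I do not expect a genuine obstacle here; the one point requiring care is that the Hausdorff bound between $[x_0,x_n]$ and the initial segment $\eta_0^{n}$ of the axis quasi-geodesic is uniform in $n$ (this is where the quasi-geodesic estimates of \cite{CS2014} used in Lemma~\ref{char M} enter), so that it survives the Arzel\`a--Ascoli limit.
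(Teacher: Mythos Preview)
Your proposal is correct and follows the same route as the paper: the corollary is deduced by applying Theorem~\ref{con2 in M} to the sequence $g_n=g^n$, after checking $g^n(x_0)\to g^+$ and $g^{-n}(x_0)\to g^-$. The paper packages this convergence fact separately as Proposition~\ref{dy M1} (identifying the limit as a fixed point of $g$ via $d_{\mathcal H}(\alpha,g(\alpha))<\infty$), whereas you re-derive it directly from the Hausdorff closeness of $[x_0,x_n]$ to $\eta_0^n$ established in Lemma~\ref{char M}; both arguments are valid and essentially equivalent.
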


\begin{figure}[!ht]
\labellist
\pinlabel $x_0$ at 320 32
\pinlabel $g_n(x_0)$ at 90 95
\pinlabel $g_n^{-1}(x_0)$ at 530 0
\pinlabel $p_1$ at 5 55
\pinlabel $p_2$ at 620 60
\pinlabel $q$ at 320 350 
\pinlabel $g_n(q)$ at 85 420

{\color {red}
\pinlabel $\gamma$ at  300 220
\pinlabel $g_n(\gamma)$ at  70  220
}
{\color {brown}
\pinlabel $\alpha_n$ at  170 70
\pinlabel $\beta_n$ at 400 10
}
{\color {teal}
\pinlabel $\gamma_n$ at  160 220
\pinlabel $g_n^{-1}(\gamma_n)$ at  380 220
}
\endlabellist
\includegraphics[width=4.5in]{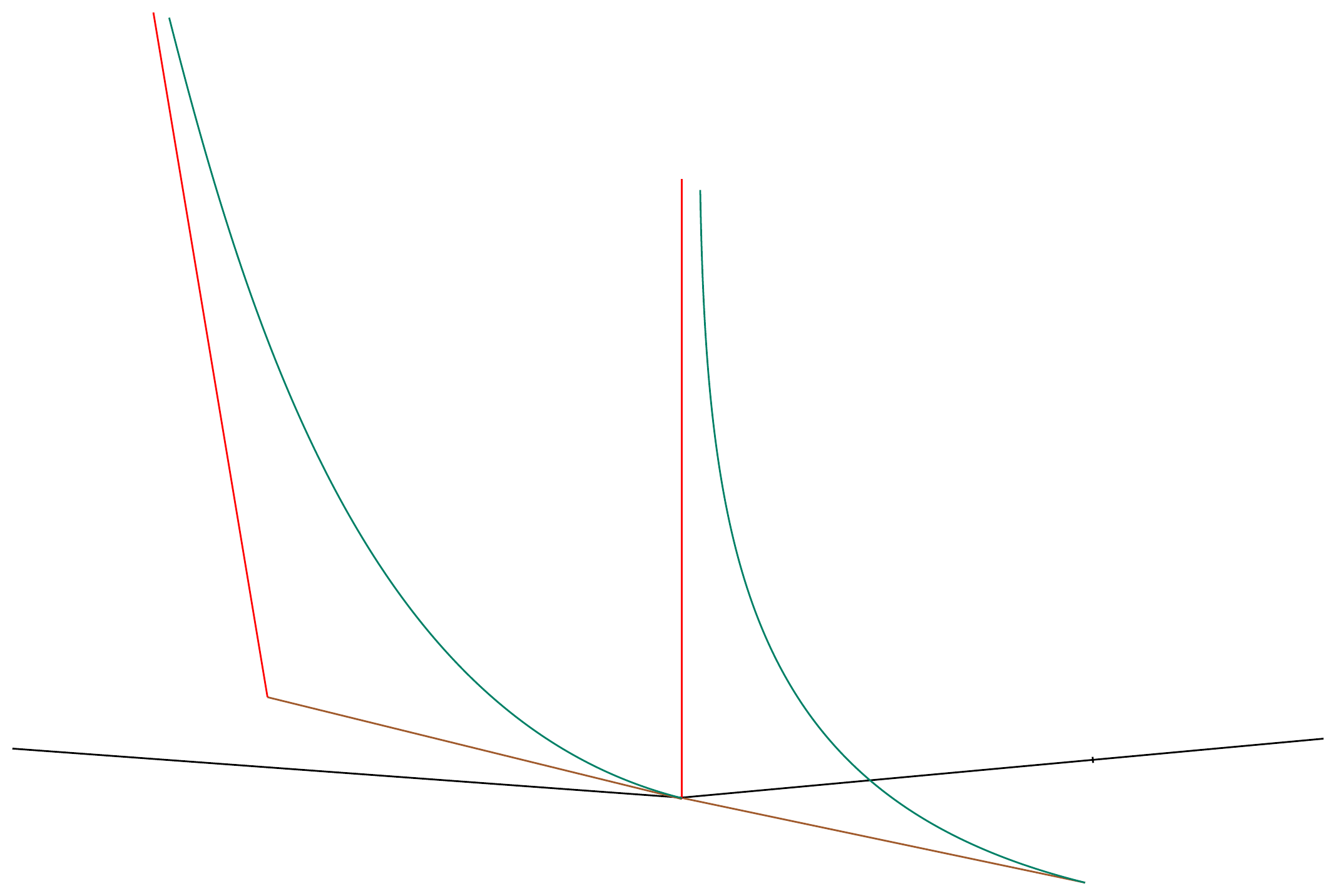}
\caption{Picture in Lemma \ref{key 1}. 
$\alpha_n=[x_0, g_n(x_0)]$, $\beta_n=g^{-1}_{n}(\alpha_n)$, 
$\gamma=[x_0, q)$, $\gamma_n=[x_0, g_n(q)]$.}
\label{scn}
\end{figure}

\subsection{Key Lemma} All of the above dynamical results on the Morse boundary rely on the following key lemma and its variants.
\begin{lem}[Key Lemma]\label{key 1}
Let $X$ be a proper geodesic space and $x_0$ be a basepoint. 
Let $\{g_n\}$ be a sequence of isometries of $X$.
Assume that $g_n(x_0)\to p_1$ $\in$  $\partial_MX_{x_0}$ and $g_n^{-1}(x_0)\to p_2$ $\in$ $\partial_MX_{x_0}$. Then for any point $q\in \partial_MX_{x_0}$ with $q\neq p_2$,  the sequence $g_n(q)$ converges to $p_1\in \partial_MX_{x_0}$.
\end{lem}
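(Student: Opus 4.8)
The plan is to work inside a single $\partial_M^N X_{x_0}$ whenever possible and use the slim/thin triangle estimates of Section~3. Fix a Morse gauge $N$ with $p_1, p_2, q \in \partial_M^N X_{x_0}$ (possible by Lemma~\ref{cpt in M} applied to a compact set containing these finitely many points, or directly since $g_n(x_0)\to p_1$, $g_n^{-1}(x_0)\to p_2$ are convergent sequences together with their limits, hence live in one $\partial_M^N X_{x_0}$ by Lemma~\ref{uni conv}). Set $\alpha_n=[x_0,g_n(x_0)]$, so by Definition~\ref{conv 1} the $\alpha_n$ are uniformly $N$-Morse and subconverge to a ray $[x_0,p_1)$; likewise $\beta_n := g_n^{-1}(\alpha_n)=[x_0,g_n^{-1}(x_0)]$ are uniformly $N$-Morse and subconverge to $[x_0,p_2)$. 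Let $\gamma=[x_0,q)$ be an $N$-Morse ray and set $\gamma_n=[x_0,g_n(q)]$ (or a geodesic from $x_0$ to a far point on $g_n(\gamma)$); the goal is to show $g_n(q)\to p_1$, i.e. that the $\gamma_n$ are uniformly Morse and subconverge to a ray ending at $p_1$.

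First I would establish a uniform Morse bound for $\gamma_n$. Apply $g_n^{-1}$: the ray $g_n^{-1}(\gamma_n)$ runs from $g_n^{-1}(x_0)$ (the endpoint of $\beta_n$) toward $q$, while $\gamma$ runs from $x_0$ toward $q$. Consider the ideal triangle $\triangle(x_0, g_n^{-1}(x_0), q)$: two of its sides are $\beta_n$ and $\gamma$, which are $N$-Morse, so by Lemma~\ref{slim 1} the third side $[g_n^{-1}(x_0), q)$ is $N_1$-Morse with $N_1=N_1(N)$, and the triangle is $\delta_N$-slim. Since $q\ne p_2$, the rays $[x_0,p_2)$ and $[x_0,q)$ diverge, and as $\beta_n \to [x_0,p_2)$ the concatenation $[x_0, g_n^{-1}(x_0)] \cup [g_n^{-1}(x_0), q)$ fellow-travels $[x_0,q)$ only on an initial segment whose length stays bounded (this is where $q\neq p_2$ is essential — otherwise $g_n^{-1}(x_0)$ would be ``toward $q$'' and the argument breaks). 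Pushing forward by $g_n$: $\gamma_n$ fellow-travels the uniformly-Morse bi-infinite-ish path $\alpha_n \cup g_n([g_n^{-1}(x_0),q))$, and one extracts that $\gamma_n$ is $N'$-Morse for $N'=N'(N)$ via Lemma~\ref{close M} / Lemma~\ref{slim 1}. Hence by Arzelà–Ascoli (Corollary~\ref{aa1}), after passing to a subsequence, $\gamma_n$ converges uniformly on compacta to an $N'$-Morse ray $\sigma$ with $\sigma(0)=x_0$.

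It remains to identify $\sigma(\infty)=p_1$. Here I would compare $\sigma$ with the limit ray $[x_0,p_1)$ of the $\alpha_n$. The point $g_n(x_0)$ lies on both $\alpha_n$ and the path $g_n(\gamma)$ issuing from it; consider the triangle $\triangle(x_0, g_n(x_0), g_n(q))$ with sides $\alpha_n$, $g_n(\gamma) = [g_n(x_0), g_n(q))$ and $\gamma_n$. Two sides ($\alpha_n$ and, after the Morse-bound step, $\gamma_n$) are uniformly Morse, so the triangle is uniformly slim (Lemma~\ref{slim 1}). On an initial segment of length $\to\infty$, $\gamma_n$ must track $\alpha_n$: because $g_n(x_0)\to p_1$ means $|x_0 - g_n(x_0)|\to\infty$ along a uniformly Morse ray, and the ``branch point'' where $\gamma_n$ leaves a neighborhood of $\alpha_n$ is forced (by the divergence estimate pulled back through $g_n^{-1}$, combined with properness) to occur at a point whose distance from $x_0$ tends to infinity. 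Therefore $d(\gamma_n(t), \alpha_n(t)) < C_{N'}$ for all $t$ up to $n' \to \infty$, which in the limit gives $d(\sigma(t), [x_0,p_1)(t)) \le C_{N'}$ for all $t$, i.e. $\sigma$ and $[x_0,p_1)$ are at finite Hausdorff distance, so $\sigma(\infty)=p_1$. Since every subsequence of $\gamma_n$ has a sub-subsequence converging to a ray ending at $p_1$, Definition~\ref{conv 1} gives $g_n(q)\to p_1$, and by Lemma~\ref{uni conv} this convergence is in $\partial_M X_{x_0}$.

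The main obstacle I anticipate is the quantitative control of the ``branch point'' in the two triangles: making precise that, because $q\neq p_2$, the fellow-traveling between $g_n^{-1}(\gamma_n)$ and $\gamma$ near $x_0$ has length bounded independently of $n$ (this needs that $\beta_n$ genuinely converges to the ray $[x_0,p_2)$ and a divergence-of-geodesics statement in the $N$-Morse stratum), and dually that the fellow-traveling between $\gamma_n$ and $\alpha_n$ has length tending to infinity. Both should follow from slimness of uniformly-Morse triangles (Lemma~\ref{slim 1}, Corollary~\ref{slim 2}) plus properness of $X$ (bounded sets are compact, so a sequence of ``branch points'' staying bounded would subconverge, forcing $q$ and $p_2$ to share an initial ray — contradiction); carrying this out carefully, with explicit constants depending only on $N$, is the technical heart of the argument.
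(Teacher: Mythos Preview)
Your overall strategy is the same as the paper's: use the slim triangle $\triangle(x_0,g_n(x_0),g_n(q))$ (equivalently its $g_n^{-1}$-pullback $\triangle(x_0,g_n^{-1}(x_0),q)$), get a uniform Morse bound on $\gamma_n=[x_0,g_n(q))$, and show the branch point between $\gamma_n$ and $\alpha_n$ escapes to infinity because $q\neq p_2$. Two things to tighten.

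First, the Morse bound on $\gamma_n$ is immediate and you are overcomplicating it: in the triangle $\triangle(x_0,g_n(x_0),g_n(q))$ the two sides $\alpha_n$ and $g_n(\gamma)$ are already $N$-Morse (the latter since $g_n$ is an isometry), so Lemma~\ref{slim 1} gives $\gamma_n$ $N_1$-Morse directly. Your detour through the pullback triangle and the sentence about $\gamma_n$ fellow-travelling $\alpha_n\cup g_n([g_n^{-1}(x_0),q))$ is circular, since $g_n([g_n^{-1}(x_0),q))$ \emph{is} a choice of $\gamma_n$.

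Second, several of your fellow-travelling assertions are stated backwards. The concatenation $\beta_n\cup[g_n^{-1}(x_0),q)$ and the ray $\gamma=[x_0,q)$ both end at $q$, so they fellow-travel on an \emph{infinite} tail, not ``only on an initial segment whose length stays bounded''; likewise $g_n^{-1}(\gamma_n)$ and $\gamma$ fellow-travel near $q$, not boundedly near $x_0$. What you actually need (and what the paper does) is that the switch point on $\beta_n$ where $g_n^{-1}(\gamma_n)$ stops being $\delta_N$-close to $\beta_n$ lies at \emph{bounded} distance from $x_0$. The paper makes this completely explicit and avoids any compactness-of-branch-points argument: pick once and for all a point $b$ on the limit ray $\beta=[x_0,p_2)$ with $d(b,\gamma)>2\delta_N+2$ (possible since $q\neq p_2$), take $b_n\in\beta_n$ with $d(x_0,b_n)=d(x_0,b)$, and set $a_n=g_n(b_n)\in\alpha_n$. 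Then $d(a_n,g_n(\gamma))>\delta_N$, so slimness forces $a_n$ to be $\delta_N$-close to some $c_n\in\gamma_n$; and crucially $d(a_n,g_n(x_0))=d(b_n,x_0)=d(b,x_0)$ is a \emph{fixed} constant, whence $d(x_0,a_n)\to\infty$ and $a_n\to p_1$. This single fixed point $b$ replaces your vaguer ``divergence estimate plus properness'' and gives all constants depending only on $N$.
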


\begin{proof}
Let $\alpha_n=[x_0, g_n(x_0)], \beta_n=g^{-1}_{n}(\alpha_n)$ and $\gamma=[x_0, q)$. 
Assume that they are $N$-Morse for some Morse gauge $N$. 
Let $\gamma_n$ be a geodesic from $x_0$ to $g_n(q)$. By Lemma \ref{slim 1}, $\gamma_n$ is $N_1$-Morse, and this ideal triangle $\alpha_n\cup g_n(\gamma)  \cup \gamma_n$ is $\delta_N$-slim, where $N_1$ and $\delta_N$ depend only on $N$.
Let $N'=\max\{N, N_1\}$.
Since $\partial_M{X^{N'}_{x_0}}$ is compact, by passing to a subsequence, we may assume that $\gamma_{n}$ converges uniformly on compact sets to some geodesic ray $\eta$. It is enough to show that $\eta(\infty)=p_1$.

Since $g_n^{-1}(x_0)\to p_2$, by passing to a subsequence we have $\beta_{n}=g^{-1}_{n}(\alpha_n)$ that converges uniformly on compact sets to $\beta$, where $\beta$ is also an $N$-Morse geodesic from $x_0$ to $p_2$.
Note that $q\neq p_2$ if and only if $d_{\mathcal{H}}(\gamma, \beta)$ is unbounded, if and only if $\beta$ is not contained in $\mathcal{N}_K(\gamma)$ for any constant $K\ge 0$.
 
Now choose $K=2\delta_N+2$. There exists a point $b\in \beta$, such that $d(b, \gamma)> 2\delta_N+2$.
Now let us fix this point $b$. 
For any large $n$, we can find a point $b_n\in \beta_n$ such that $d(x_0, b)=d(x_0, b_n)$.

Since $\beta_n$ converges uniformly on compact sets to $\beta_N$, there exists $M_1>0$ such that $d(b, b_n)\le \delta_N+1$ for any $n>M_1$.
 Hence $d(b_n, \gamma)\ge \delta_N+1$, so $d(g_n(b_n), g_n(\gamma))\ge \delta_N+1$. Denote the point $a_n=g_n(b_n) \in \alpha_n$. We have $d(a_n, g_n(\gamma))>\delta_N$. Recall that the ideal triangle $\alpha_n\cup g_n(\gamma) \cup \gamma_n$ is $\delta_N$-slim. See Figure \ref{tri}. Thus there exists a point $c_n\in \gamma_n$ such that $d(c_n, a_n)\le \delta_N$ for any $n>M_1$. By Lemma \ref{seg M} and Lemma \ref{close M}, the geodesics $[x_0, a_n]$ and $[x_0, c_n]$ are $N_2$-Morse and the Hausdorff distance $d_{\mathcal{H}}([x_0, a_n], [x_0, c_n])\le C_0$, where $N_2$ and $C_0$ depend only on $N$.

Note that $d(x_0, g_n(x_0))-d(x_0, a_n)=d(g_n(x_0), a_n)=d(x_0, b_n)=d(x_0, b)$ is a fixed value for any $n$.
Thus $a_n\to p_1$.
By passing to a subsequence, $c_n\to\eta(\infty)$. Since $d_{\mathcal{H}}([x_0, a_n], [x_0, c_n])\le C_0$ we have  $\eta(\infty)=p_1$.
\end{proof}

\begin{figure}[!ht]
\labellist
\pinlabel $x_0$ at 270 5
\pinlabel $g_n(x_0)$ at 17 25
\pinlabel $g_n(q)$ at  10 275
\pinlabel $a_n$ at 115 7
\pinlabel $c_n$ at 122 67
\pinlabel $\le \delta_N$ at 133 32
{\color {red}
\pinlabel $g_{n}(\gamma)$ at 5 150
}

{\color {teal}
\pinlabel $\gamma_{n}$ at 60 150
}
{\color {brown}
\pinlabel $\alpha_{n}$ at 90 10
}

\endlabellist
\includegraphics[width=3.2in]{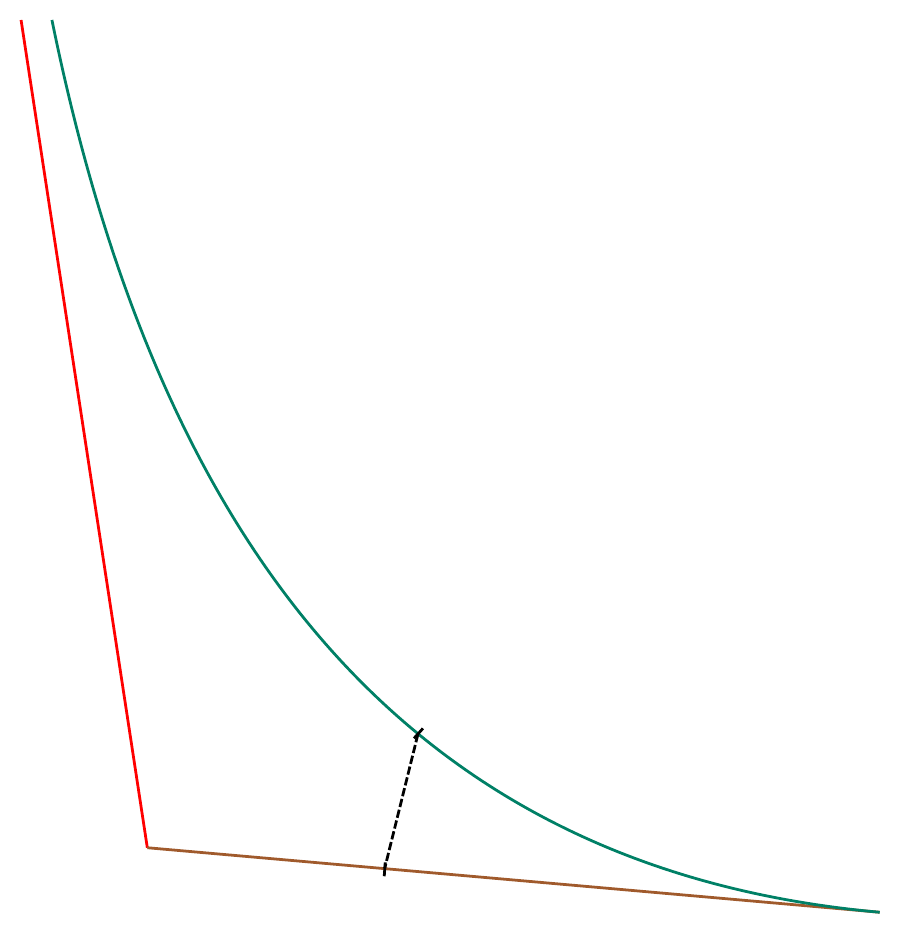}
\caption{The triangle $\triangle(x_0, g_n(x_0), g_n(q))$ is $\delta_N$-slim and $d(a_n, c_n)\le \delta_N$.} 
\label{tri}
\end{figure}

\subsection{Proof of Theorem \ref{min}}
The following propositions were first shown for the contracting boundary of a CAT(0) space by Murray in \cite{Murray}. Here we establish a different approach to the first one using Key Lemma \ref{key 1}.
\begin{prop}\label{dense 1}
Let $X$ be a proper geodesic space and let $G$ be a group acting cocompactly on $X$. 
Let $q\in \partial_MX$. Then $q$ is globally fixed by $G$ or its orbit $Gq$ is dense in $\partial_MX$.
\end{prop}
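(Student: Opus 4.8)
The plan is to use a contradiction/dichotomy argument driven by Key Lemma \ref{key 1}. Suppose $q$ is not globally fixed by $G$; I want to show $Gq$ is dense. Since $G$ acts cocompactly on a proper geodesic space $X$, pick a basepoint $x_0$ with $G$-orbit coarsely covering $X$: there is $R>0$ with $X = \mathcal{N}_R(Gx_0)$. The first step is to produce, for \emph{any} target point $p\in\partial_MX$, a sequence $h_n\in G$ with $h_n(x_0)\to p$. To do this, take a Morse geodesic ray $\gamma$ from $x_0$ representing $p$; along $\gamma$ choose points $\gamma(t_n)$ with $t_n\to\infty$ and then choose $h_n\in G$ with $d(h_n(x_0),\gamma(t_n))\le R$. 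By Lemma \ref{close M} the geodesics $[x_0,h_n(x_0)]$ are uniformly $N'$-Morse (where $N'$ depends only on the gauge of $\gamma$ and on $R$), and by the Arzel\`a--Ascoli corollaries a subsequence of them converges uniformly on compact sets to a ray at $x_0$ which, using that $\gamma$ is Morse and $d_{\mathcal H}([x_0,h_n(x_0)],\gamma|_{[0,t_n]})$ is uniformly bounded, must have endpoint $p$. Hence $h_n(x_0)\to p$.

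The second step handles the ``repeller'' coordinate. Passing to a further subsequence (using the Remark after Theorem \ref{con2 in M}, i.e.\ compactness of $\partial_M^{N}X_{x_0}$ applied to the uniformly-Morse geodesics $[x_0,h_n^{-1}(x_0)]$), we may assume $h_n^{-1}(x_0)\to p_2$ for some $p_2\in\partial_MX$. Now I split into two cases according to whether $q\neq p_2$ or $q=p_2$. If $q\neq p_2$, then Key Lemma \ref{key 1} applied to $\{h_n\}$ gives $h_n(q)\to p$, so $p$ lies in the closure of $Gq$. If instead $q=p_2$, then since $q$ is not globally fixed there exists $f\in G$ with $f(q)\neq q$; consider the modified sequence $h_n f\in G$. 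We have $(h_nf)(x_0)$: note $d(h_nf(x_0),h_n(x_0))=d(f(x_0),x_0)$ is constant, so by Lemma \ref{close M} and the Arzel\`a--Ascoli argument again $(h_nf)(x_0)\to p$ as well; and $(h_nf)^{-1}(x_0)=f^{-1}h_n^{-1}(x_0)$, which by the same reasoning converges (after a subsequence) to $f^{-1}(p_2)=f^{-1}(q)$. Since $q\neq f^{-1}(q)$ is impossible to guarantee directly, I instead apply Key Lemma \ref{key 1} to $\{h_nf\}$ at the point $q$: this works provided $q\neq f^{-1}(q)$, i.e.\ $f(q)\neq q$, which is exactly our choice of $f$. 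Hence $(h_nf)(q)\to p$, and again $p\in\overline{Gq}$.

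Combining the two cases, every $p\in\partial_MX$ lies in $\overline{Gq}$, so $Gq$ is dense. The main obstacle I anticipate is the first step: verifying carefully that the orbit points $h_n(x_0)$ chosen near $\gamma(t_n)$ actually converge to $p$ \emph{in the sense of Definition \ref{conv 1}} — one needs the uniform Morse gauge (from Lemma \ref{close M}) to invoke Arzel\`a--Ascoli on $\partial_M^{N'}X_{x_0}$, and then one must identify the limit ray's endpoint with $p$ using that $\gamma$ is $N$-Morse so that any ray uniformly Hausdorff-close to $\gamma$ represents the same boundary point (Lemma \ref{E G}). The case analysis itself is routine once Key Lemma \ref{key 1} is in hand; the only subtlety there is making sure that in the $q=p_2$ case the auxiliary element $f$ genuinely moves $q$, which is precisely where the hypothesis ``$q$ is not globally fixed by $G$'' gets used.
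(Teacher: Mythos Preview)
Your proof is correct and follows essentially the same route as the paper's: use cocompactness to produce a sequence $h_n$ with $h_n(x_0)\to p$, extract a subsequence so that $h_n^{-1}(x_0)\to p_2$, and then invoke Key Lemma~\ref{key 1}, handling the bad case $q=p_2$ via an element that moves $q$. The only minor difference is that in the case $q=p_2$ the paper applies the Key Lemma to the \emph{original} sequence $\{h_n\}$ at the orbit point $f(q)\in Gq$ (which differs from $p_2=q$), rather than modifying the sequence to $\{h_nf\}$ and re-verifying its convergence properties; this saves you the check that $(h_nf)^{-1}(x_0)\to f^{-1}(p_2)$, but the two maneuvers are equivalent.
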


\begin{proof}
Fix a basepoint $x_0\in X$. Assume that $q$ is not globally fixed by $G$. So we have $h(q)\neq q$ for some $h\in G$.
Let $\alpha$ be any Morse geodesic ray with $\alpha(0)=x_0$. 
Since the action of $G$ is cocompact, there exists some constant $C$ and a sequence of elements $g_n\in G$ so that $d(\alpha(n), g_n(x_0))\le C$ for any $n\in \bbN$.
Let $\alpha_n=[x_0, g_n(x_0)]$. 
We can assume that $q, h(q), \alpha(\infty)\in \partial_M^{N_0}X_{x_0}$ for some $N_0$.
By Lemma \ref{close M} the geodesic $\alpha_n$ is $N_2$-Morse and $\alpha_n\subset \mathcal{N}_{C_1}(\alpha)$, where $N_2$ and $C_1$ depend only on $C$ and $N_0$.

By the Arzelà-Ascoli theorem, it is not hard to see that passing to a subsequence if necessary, $g_n(x_0)\to \alpha' (\infty)$ and $g^{-1}_n(x_0)\to \beta(\infty)$ for some geodesic rays $\alpha'$ and $\beta$. They are Morse since $\alpha_n$ is $N_2$-Morse.
Note that $\alpha(\infty)=\alpha'(\infty)$ since $\alpha_n\subset \mathcal{N}_{C_1}(\alpha)$.
 
If $\beta(\infty)\neq q$, we conclude that $g_n(q)$ converges to $\alpha(\infty)$ by Lemma \ref{key 1}. Otherwise we replace $q$ with $h(q)$.

\end{proof}

\begin{proof}[Proof of Theorem \ref{min}]
If $Gp$ is not dense in $\partial_MG$ for some $p\in \partial_MG$, then $Gp=p$ by Proposition \ref{dense 1}. By Theorem \ref{cpt boundary} $G$ is a hyperbolic group. Thus $G$ is virtually $\bbZ$ and $|\partial_MG|=2$. We get a contradiction.
\end{proof}

\subsection {Proof of Theorem \ref{con2 in M}}

We give another version of Lemma \ref{key 1} in the language of neighborhoods.
Recall that $\{V^{N}_n(\alpha)\}$ is a fundamental system of neighborhoods of the point $[\alpha]$ in $\partial_M^{N}X_{x_0}$, where $\alpha$ is an $N$-Morse geodesic ray with $\alpha(0)=x_0$.
\begin{lem}\label{key 2}

Let $X$ be a proper geodesic space and $x_0$ be a basepoint. 
Let $\{g_n\}$ be a sequence of isometries of $X$.
Assume that $g_n(x_0)\to p_1$ in $\partial_M^{N}X_{x_0}$ and $g_n^{-1}(x_0)\to p_2$ in $\partial_M^{N}X_{x_0}$. There exists a Morse gauge $N_1$ depending only on $N$ such that the following holds.

Given any point $q\in \partial_M^{N}X_{x_0}$ with $q\neq p_2$, there exists a neighborhood $V^{N}_{m'}(q)$ of $q$ in $\partial_M^{N}X_{x_0}$ such that for any neighborhood $V^{N_1}_{m}(p_1)$ of $p_1$ in $\partial_M^{N_1}X_{x_0}$, by passing a subsequence we have $g_{n}(V^{N}_{m'}(q))\subset V^{N_1}_m(p_1)$ for all $n>k$, where $k$ depends only on $N$, $m$ and the sequence $g_n$.
\end{lem}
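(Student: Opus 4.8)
The plan is to upgrade the Key Lemma (Lemma \ref{key 1}) to a neighborhood-theoretic statement, essentially by running the same geometric argument with explicit quantitative control instead of subsequential limits. First I would fix the setup exactly as in the proof of Lemma \ref{key 1}: let $\alpha_n=[x_0,g_n(x_0)]$, $\beta_n=g_n^{-1}(\alpha_n)$, $\gamma=[x_0,q)$, all $N$-Morse (after enlarging $N$ if necessary, using that $p_1,p_2,q\in\partial_M^NX_{x_0}$), and let $\gamma_n=[x_0,g_n(q)]$. By Lemma \ref{slim 1} each $\gamma_n$ is $N_1$-Morse and the ideal triangle $\alpha_n\cup g_n(\gamma)\cup\gamma_n$ is $\delta_N$-slim, with $N_1,\delta_N$ depending only on $N$; this $N_1$ is the Morse gauge claimed in the statement. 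The goal is then: find $m'$ (depending only on $N$ and the limit data) so that every $[\beta]\in V^N_{m'}(q)$ gets pushed by $g_n$ into $V^{N_1}_m(p_1)$ for all large $n$.

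The key geometric input, extracted from the proof of Lemma \ref{key 1}, is this: since $q\neq p_2$, the limiting geodesic $\beta=\lim\beta_n$ satisfies $d_{\mathcal H}(\gamma,\beta)=\infty$, so there is a point $b\in\beta$ with $d(b,\gamma)>2\delta_N+2$; fixing $b$, for $n$ large there is $b_n\in\beta_n$ with $d(x_0,b_n)=d(x_0,b)$ and $d(b_n,\gamma)\ge\delta_N+1$, hence $a_n:=g_n(b_n)\in\alpha_n$ satisfies $d(a_n,g_n(\gamma))>\delta_N$, which forces a point $c_n\in\gamma_n$ with $d(c_n,a_n)\le\delta_N$ by slimness. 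The crucial quantitative observation is that $d(x_0,g_n(x_0))-d(x_0,a_n)=d(x_0,b)$ is a constant $R_0$ independent of $n$, and $d(a_n,c_n)\le\delta_N$, so the initial segments of $\gamma_n$ of length roughly $d(x_0,g_n(x_0))-R_0-\delta_N$ fellow-travel (within a constant $C_0=C_0(N)$, by Lemma \ref{close M} and Lemma \ref{seg M}) the initial segments of $\alpha_n$ of comparable length; and since $g_n(x_0)\to p_1$ in $\partial_M^NX_{x_0}$, those segments of $\alpha_n$ in turn $C_N$-fellow-travel a fixed $N$-Morse ray to $p_1$ on longer and longer initial intervals. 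Chaining these two fellow-traveling estimates shows $\gamma_n(t)$ stays within $C_0+C_N$ (or a slightly larger explicit constant depending only on $N$) of the axis to $p_1$ for all $t$ up to a length $\ell_n\to\infty$, and $\ell_n$ can be made $\ge m$ once $n>k$ for a suitable $k=k(N,m,\{g_n\})$. This gives $g_n(q)=\gamma_n(\infty)\in V^{N_1}_m(p_1)$ for $n>k$.

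To get the statement for a whole neighborhood $V^N_{m'}(q)$ rather than for $q$ alone, I would choose $m'$ large enough that $b$ lies on the subsegment of $\beta$ "controlled" by the first $m'$ coordinates — more precisely, large enough that for any $N$-Morse ray $\gamma'$ from $x_0$ with $[\gamma']\in V^N_{m'}(q)$, i.e. $d(\gamma(t),\gamma'(t))<C_N$ for $t<m'$, one still has $d(b,\gamma')>2\delta_N$ (this needs $m'$ to exceed roughly $d(x_0,b)+C_N$, and $d(x_0,b)$ is bounded in terms of how far $\beta$ diverges from $\gamma$, which depends only on the limit data). Then the entire argument above runs verbatim with $\gamma'$ in place of $\gamma$: the divergence point $b$ is still far from $\gamma'$, hence from $g_n(\gamma')$, hence $\gamma_n'=[x_0,g_n(\gamma'(\infty))]$ fellow-travels $\alpha_n$ on a long initial segment, hence lands in $V^{N_1}_m(p_1)$. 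The uniformity is exactly what makes the constant $m'$ and the threshold $k$ independent of the particular $[\beta]\in V^N_{m'}(q)$.

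The main obstacle I anticipate is making the "chaining of two fellow-traveling estimates" genuinely uniform and tracking the constants honestly: the slim-triangle constant $\delta_N$, the fellow-traveling constant $C_0$ from Lemmas \ref{seg M} and \ref{close M}, and the neighborhood constant $C_N$ from Lemma \ref{E G} must all be combined so that the resulting bound on $d(\gamma_n'(t),(\text{axis to }p_1)(t))$ is strictly below the $C_{N_1}$ threshold defining $V^{N_1}_m(p_1)$ — which may require one more application of Lemma \ref{E G} to pass from an $N_1$-Morse ray staying within a constant of $p_1$'s axis to the conclusion that it lies in the fundamental neighborhood $V^{N_1}_m(p_1)$, possibly after enlarging $N_1$. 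A secondary subtlety is that $\beta$ is only obtained after passing to a subsequence of $\{g_n\}$ (as already flagged in the remark after Theorem \ref{con2 in M}), so the phrase "by passing to a subsequence" in the statement is essential and should be invoked at the outset, after which all remaining choices — $N_1$, $m'$, $b$, and the dependence of $k$ on $m$ — are made along that fixed subsequence.
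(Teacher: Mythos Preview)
Your proposal is correct and follows essentially the same approach as the paper's proof: fix the divergence point $b\in\beta$ with $d(b,\gamma)$ large (the paper takes $>2\delta_{N_0}+2+C_N$), choose $m'$ just past $d(x_0,b)$ so that every $\gamma'\in V^N_{m'}(q)$ still satisfies $d(b,\gamma')>2\delta_{N_0}+2$, and then rerun the Key Lemma argument uniformly in $\gamma'$ to get the fellow-traveling of $\gamma_n'$ with $\alpha_n$ and hence with a fixed ray $\alpha$ to $p_1$. The obstacle you anticipate---getting the chained constant $C_0'+1$ back down below the neighborhood threshold $C_{N_1}$---is handled in the paper exactly as you suggest, by one further appeal to Cordes' Corollary 2.5 (your Lemma \ref{E G}), after arranging $d(x_0,a_n)-\delta_{N_0}>\max\{m+2C_0'+2,\,6C_0'+6\}$.
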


\begin{proof}
Since $q\ne p_2$, we can choose sufficiently large $m'$ such that $p_2\notin V^N_{m'}(q)$.
For any $[\gamma'] \in V^N_{m'}(q)$. By Lemma \ref{E G}, the geodesic $\gamma'$ is $N'$-Morse where $N'$ depends only on $N$. Let $N_0=\max\{N, N'\}$. 
By Lemma \ref{slim 1}, the geodesic triangle $\triangle(x_0, g_n(x_0), g_n([\gamma']))$ is $\delta_{N_0}$-slim and the geodesic $[x_0, g_n([\gamma']))$ is $N'_0$-Morse, where $\delta_{N_0}$ and $N'_0$ depend only on $N$. Let $N_1=\max\{N'_0, N_0\}$.

\begin{figure}[!ht]
\labellist
\pinlabel $\alpha$ at 150 30
\pinlabel $\beta$ at 450 40
\pinlabel $b$ at 500 80
\pinlabel $x_0$ at 320 32
\pinlabel $g_n(x_0)$ at 90 95
\pinlabel $g_n^{-1}(x_0)$ at 530 0
\pinlabel $p_1$ at 5 55
\pinlabel $p_2$ at 620 60
\pinlabel $q$ at 320 360 
\pinlabel $g_n(q)$ at 70 425
\pinlabel $[\gamma']$ at 380 355
\pinlabel $g_n([\gamma'])$ at 146 425
\pinlabel $\gamma'_n$ at 146 225

{\color {blue}
\pinlabel $\gamma'$ at  340 220
\pinlabel $g_n(\gamma')$ at  150  130
}
{\color {red}
\pinlabel $\gamma$ at  300 220
\pinlabel $g_n(\gamma)$ at  90  130
}
{\color {brown}
\pinlabel $\alpha_n$ at  170 70
\pinlabel $\beta_n$ at 400 10
}
\endlabellist
\includegraphics[width=5.5in]{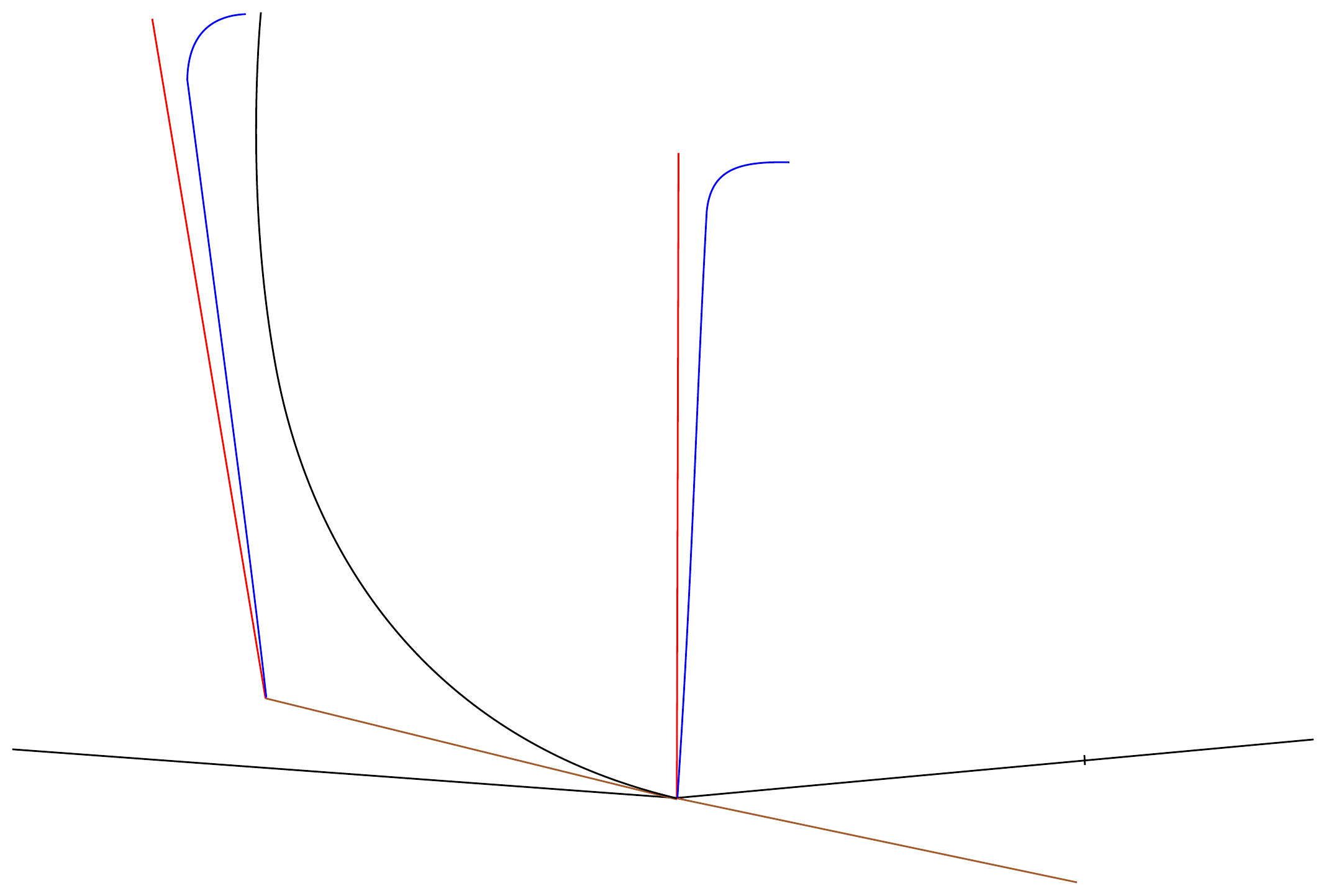}
\caption{Picture in Lemma \ref{key 2}. 
$\alpha_n=[x_0, g_n(x_0)]$, $\beta_n=g^{-1}_{n}(\alpha_n)$, 
$\gamma=[x_0, q)$, $[\gamma']\in V^{N}_{m'}(q)$. $b\in \beta$ and $d(b, \gamma')> 2\delta_{N_0}+2$ for all $[\gamma']\in V^N_{m'}(q)$}
\label{scn1}
\end{figure}

Now we use the same notations as in the proof of Lemma \ref{key 1}.  Let $\gamma=[x_0, q)$, $\beta=[x_0, p_2)$. 
Fix the point $b\in\beta$ such that $d(b, \gamma)> 2\delta_{N_0}+2+C_N$, where $C_N$ is the constant in Lemma \ref{E G}. 
Choose an integer $m'>d(b, x_0)+2\delta_{N_0}+2$.
Note that $d(\gamma'(t), \gamma(t))< C_N$ for all $t\in [0, m')$. Thus $d(b, \gamma')> 2\delta_{N_0}+2$ for all $[\gamma']\in V^N_{m'}(q)$. See Figure \ref{scn1}

Let $\gamma'_n=[x_0, g_n([\gamma']))$ and $\alpha_n=[x_0, g_n(x_0)]$.
Following the proof of Lemma \ref{key 1}, for $n> M_1$ we can find points $a_n\in \alpha_n$ and $b'_n\in \gamma'_n$ such that $d(a_n, b'_n)< \delta_{N_0}$ for all $[\gamma']\in V^N_{m'}(q)$. 

By the standard argument, $d(\gamma'_n(t), \alpha_n(t)) < C'_0$ for any $t\in [0, d(x_0, a_n)-\delta_{N_0}]$ and $n>M_1$, where $C'_0$ depends only on $N$. Note that $a_n\to p_1$. There exists $M_2$ such that $d(\gamma'_n(t), \alpha(t))< C'_0+1$ for all $n>M_2$ up to subsequence.
For any $m>0$, we can find $k$ large enough such that $d(x_0, a_n)-\delta_{N_0}>\max\{ m+2C'_0+2, 6C'_0+6\}$ for all $n>k$.
By Corollary 2.5 in \cite{Cordes}, we have $d(\gamma'_n(t), \alpha(t))< C_N$ for any $t< m$ and all $n>k$. It implies that $g_n([\gamma'])\in V^{N_1}_m(p_1)$ for all $[\gamma']\in V^{N}_{m'}(q)$ and $n>k$.
\end{proof}

\begin{proof}[Proof of Theorem \ref{con2 in M}]
By Lemma \ref{cpt in M}, there exists some Morse gauge $N$ such that $K\subset \partial_M^{N}X_{x_0}$ and by Remark \ref{cpt rmk} $K$ is compact in $\partial_M^{N}X_{x_0}$. By Lemma \ref{uni conv} we can choose large $N$ so that $g_n(x_0)\to p_1$ in $\partial_M^{N}X_{x_0}$ and $g^{-1}_n(x_0)\to p_2$ in $\partial_M^{N}X_{x_0}$. Note that $U\cap \partial_M^{N_1}X_{x_0}$ is an open neighborhood of $p_1$ in $\partial_M^{N}X_{x_0}$. 

Let $N_1$ be the Morse gauge in Lemma \ref{key 2}. Fix a neighborhood $V^{N_1}_m({p_1})\subset U\cap \partial_M^{N_1}X_{x_0}$ of $p_1$ in $\partial_M^{N_1}X_{x_0}$.
For any point $s\in K$, we can find some neighborhood $V^N_{n_s}(s)$ of $s$ in $\partial_M^{N}X_{x_0}\backslash p_2$. For each $s$ there exists an open neighborhood $U_s\subset V^N_{n_s}(s)$. Then $K$ is covered by $(U_s)_{s\in K}$. Since $K$ is compact, there exists a finite set ${s_i}, i=1,2,...,l$ and $K$ is covered by $\cup_{i=1}^{l}U_{s_i}$

By passing to a subsequence, for each $i$ we have $k_i$ such that $g_{n}(V^N_{n_{s_i}}(s_i))\subset V^{N_1}_m({p_1})$ for all $n\ge k_i$ by Lemma \ref{key 2}.
Taking $k=\max\{k_1, k_2,..., k_l\}$. We conclude that $g_n(K)\subset U\cap \partial_M^{N_1}X_{x_0}$ for all $n\ge k$.
\end{proof}

\subsection{Proof of Corollary \ref{weak N-S}}
In this subsection we will see that the dynamics of a Morse isometry $g$ is very simple. 
\begin{prop}\label{dy M1}
Let $g$ be a Morse isometry of a proper geodesic space $X$.
Then $g^{n}(x)\to g^{+}$ and $g^{-n}(x)\to g^{-}$ for any point $x\in X$. 
\end{prop}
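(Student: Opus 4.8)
The plan is to show directly that the sequence of geodesics $[x_0, g^n(x_0)]$ converges uniformly on compact sets to an axis of $g$, and that all these geodesics are uniformly Morse; together with the definition of convergence (Definition \ref{conv 1}) this gives $g^n(x_0)\to g^+$, and then the same statement for an arbitrary $x\in X$ follows by comparing $[x_0, g^n(x_0)]$ with $[x, g^n(x)]$ via Lemma \ref{close M}. Since $g$ is Morse, Theorem \ref{M in G}(2)-(3) tells us there is a Morse gauge $N_0$ with $[g^i(x_0), g^j(x_0)]$ being $N_0$-Morse for all $i,j$, and that $\eta_{-\infty}^{+\infty}$ is an $N$-Morse $(\lambda,\epsilon)$-quasi-geodesic; moreover, as observed right after Definition \ref{Morse}, there is $\mu>0$ with $d(g^n(x_0), x_0)\ge n\mu\to\infty$. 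So Corollary \ref{aa1} applies to $\alpha_n := [x_0, g^n(x_0)]$: every subsequence has a sub-subsequence converging uniformly on compact sets to a geodesic ray $\gamma_\infty$ with $\gamma_\infty(0)=x_0$, which is $N_0$-Morse.

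The key point is to identify $\gamma_\infty(\infty)$ with $g^+$ regardless of which subsequence we took. Here I would use the bi-infinite axis $\gamma$ constructed in the proof of Lemma \ref{char M}, which satisfies $d_{\mathcal H}(\gamma, \eta_{-\infty}^{+\infty})\le C$ for the constant $C$ there; in particular the forward ray $\gamma|_{[0,\infty)}$ stays within bounded Hausdorff distance of $\eta_0^{+\infty} = \bigcup_{k\ge 0} g^k(\eta)$, and $\gamma(\infty) = g^+$ by construction. Now $\alpha_n = [x_0, x_n]$ has Hausdorff distance at most $C$ from $\eta_0^n$ (this is the Hausdorff-distance estimate from Lemma 2.5(3) of \cite{CS2014} quoted in the proof of Lemma \ref{char M}), and $\eta_0^n \to \eta_0^{+\infty}$ uniformly on compact sets; hence the limit ray $\gamma_\infty$ lies within Hausdorff distance $\le C$ of $\eta_0^{+\infty}$, hence within bounded Hausdorff distance of $\gamma|_{[0,\infty)}$. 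Two Morse rays from the same basepoint at finite Hausdorff distance are equivalent, so $\gamma_\infty(\infty) = \gamma(\infty) = g^+$. Since every subsequential limit equals $g^+$, Definition \ref{conv 1} gives $g^n(x_0)\to g^+$, i.e. $g^n(x_0)\to_{x_0} g^+$.

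For a general basepoint/point $x\in X$: write $d(x,x_0) =: R$. Then $d(g^n(x), g^n(x_0)) = R$, so by Lemma \ref{close M} the geodesic $[x, g^n(x)]$ is $N_0'$-Morse with $N_0'$ depending only on $N_0$ and $R$, and $d_{\mathcal H}([x,g^n(x)],[x_0,g^n(x_0)])$ is bounded by a constant depending only on $N_0,R$. By Corollary \ref{aa1} (note $d(x,g^n(x))\to\infty$) every subsequence of $[x,g^n(x)]$ has a sub-subsequence converging uniformly on compact sets to an $N_0'$-Morse ray $\alpha$ with $\alpha(0)=x$, and the uniform Hausdorff bound forces $d_{\mathcal H}(\alpha, \gamma_\infty)<\infty$, so $\alpha(\infty) = g^+$. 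Thus $g^n(x)\to g^+$. Finally, $g^{-n}(x)\to g^-$ is the same argument applied to the isometry $g^{-1}$, whose poles are $(g^{-1})^+ = g^-$ and $(g^{-1})^- = g^+$ (the axes of $g$ and $g^{-1}$ coincide with reversed orientation).

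The main obstacle is the middle step: pinning down that the subsequential limit ray is the \emph{forward} end $g^+$ of the axis and not $g^-$ or some other boundary point. This is handled by the uniform Hausdorff-distance control between $\alpha_n=[x_0,x_n]$ and the initial segment $\eta_0^n$ of the quasi-geodesic $\eta_{-\infty}^{+\infty}$, which is inherited in the limit; everything else is a routine application of Arzel\`a--Ascoli (Corollary \ref{aa1}) together with Lemma \ref{close M} and the fact that finite Hausdorff distance between Morse rays from a common basepoint means they represent the same boundary point.
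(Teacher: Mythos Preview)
Your argument is correct, but it takes a different route from the paper's. The paper argues directly for arbitrary $x$: from a subsequential Arzel\`a--Ascoli limit $\alpha$ of $[x,g^{n_i}(x)]$, it observes that $g([x,g^{n_i}(x)])=[g(x),g^{n_i+1}(x)]$ has endpoints within $d(x,g(x))$ of those of $[x,g^{n_i}(x)]$, so Lemma~\ref{close M} gives $d_{\mathcal H}(\alpha,g(\alpha))\le D$; hence $\alpha(\infty)$ is a fixed point of $g$, and then one concludes it is $g^+$. Your approach instead pins down the limit by comparing $[x_0,x_n]$ with the forward piece $\eta_0^n$ of the defining quasi-geodesic and passing to the limit $\eta_0^{+\infty}$. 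What you gain is that the identification $\gamma_\infty(\infty)=g^+$ (and not $g^-$) is made explicit via the Hausdorff control, whereas the paper's ``it must be $g^+$'' leaves that step implicit; what the paper gains is brevity and that the general-$x$ case is handled in one stroke rather than reduced to $x_0$ via Lemma~\ref{close M}.

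One small citation issue: you invoke Theorem~\ref{M in G}(2)--(3), but that theorem assumes a geometric group action, while Proposition~\ref{dy M1} concerns an arbitrary Morse isometry of a proper geodesic space. The facts you need---that $\eta_{-\infty}^{+\infty}$ is a Morse quasi-geodesic and that all $[g^i(x_0),g^j(x_0)]$ are uniformly Morse---are exactly the definition of Morse isometry together with the observation made in (and right after) the proof of Lemma~\ref{char M}; cite those instead.
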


\begin{proof}The proof here is similar to that of Lemma \ref{char M}.
For any $x\in X$. By the Arzelà-Ascoli theorem, any subsequence of $[x, g^{n}(x)]$ has a subsequence $[x, g^{n_{i}}(x)]$ that converges uniformly on compact set to a geodesic ray $\alpha$ with $\alpha(0)=x$. Since $[x, g^{n}(x)]$ is $N_x$-Morse for some $N_x$, so $\alpha$ is $N_x$-Morse. By Lemma \ref{close M}, the Hausdorff distance $d_{\mathcal{H}}([x, g^{n_{i}}(x)], g([x, g^{n_{i}}(x)])\le D$ for some $D$ depending only on $N_x$ and $d(x, g(x))$. It follows that $d_{\mathcal{H}}(\alpha, g(\alpha))\le D$. So $a(\infty)$ is a fixed point of $g$. It must be $g^+$. We conclude that $g^n(x)\to g^+$. 
\end{proof}

\begin{cor}\label{dy M2}
Let $g$ be a Morse isometry of a proper geodesic space $X$. Let $x_0$ be a basepoint of $X$.
Then the sequence $g^{n}(q)\to g^{+}$ as $n\to +\infty$ for any point $q\not=g^{-}$ in $\partial_MX_{x_0}$.  
\end{cor}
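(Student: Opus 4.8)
The plan is to derive Corollary~\ref{dy M2} as a direct application of the Key Lemma~\ref{key 1}, with a short argument to verify its hypotheses for the constant sequence $g_n = g^n$. First I would fix a basepoint $x_0$ and set $g_n = g^n$ for $n \ge 1$. By Proposition~\ref{dy M1} applied to the point $x_0$, we have $g^n(x_0) \to g^+$ in $\partial_MX_{x_0}$, and applying the same proposition to the element $g^{-1}$ (whose attracting fixed point is $g^-$) we get $g^{-n}(x_0) = (g^{-1})^n(x_0) \to g^-$ in $\partial_MX_{x_0}$. Thus both hypotheses of Lemma~\ref{key 1} hold with $p_1 = g^+$ and $p_2 = g^-$.

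Next, I would invoke Lemma~\ref{key 1} itself: for any $q \in \partial_MX_{x_0}$ with $q \neq g^-$, the sequence $g_n(q) = g^n(q)$ converges to $p_1 = g^+$ in $\partial_MX_{x_0}$. This is exactly the claimed statement, so the proof is essentially complete once the hypotheses are checked. I should be slightly careful about one point: Lemma~\ref{key 1} as stated asserts convergence of the full sequence $g_n(q)$, not merely a subsequence, so no extra diagonal argument is needed here — the conclusion transfers verbatim. It is also worth noting that $g^-\neq g^+$ (they are the two distinct fixed points of the Morse isometry $g$ by Lemma~\ref{char M}), so the excluded point $q = g^-$ does not interfere with the case $q = g^+$, which is trivially fixed.

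The only genuine subtlety — and the step I would flag as the main (minor) obstacle — is confirming that Proposition~\ref{dy M1} does give convergence \emph{in the topology of $\partial_MX_{x_0}$} in the precise sense required as input to the Key Lemma, namely the notion of Definition~\ref{conv 1}/the convergence $p_n \to p$ discussed after Lemma~\ref{uni conv}. Here the relevant observation is that for a Morse isometry the geodesics $[x_0, g^n(x_0)]$ are all $N$-Morse for a single Morse gauge $N$ (this follows from the discussion after Theorem~\ref{M in G}, or from Lemma~\ref{char M} together with Lemma~\ref{close M}), so the uniform-Morse-gauge requirement in Definition~\ref{conv 1} is met, and the Arzel\`a--Ascoli argument in the proof of Proposition~\ref{dy M1} produces limit rays in $[g^+]$ for every subsequence. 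Hence $g^n(x_0) \to g^+$ genuinely in the required sense, and likewise $g^{-n}(x_0) \to g^-$, completing the verification.

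\begin{proof}[Proof of Corollary~\ref{dy M2}]
Let $g_n = g^n$ for $n \ge 1$. Since $g$ is a Morse isometry, there is a single Morse gauge $N$ such that every geodesic $[g^i(x_0), g^j(x_0)]$ is $N$-Morse, so the geodesics $[x_0, g^n(x_0)]$ are uniformly $N$-Morse. By Proposition~\ref{dy M1}, $g^n(x_0) \to g^+$ in $\partial_MX_{x_0}$; applying the same proposition to the Morse isometry $g^{-1}$, whose attracting fixed point is $g^-$, gives $g_n^{-1}(x_0) = g^{-n}(x_0) \to g^-$ in $\partial_MX_{x_0}$. Thus the hypotheses of Lemma~\ref{key 1} hold with $p_1 = g^+$ and $p_2 = g^-$. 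By Lemma~\ref{key 1}, for any $q \in \partial_MX_{x_0}$ with $q \neq g^-$ we have $g_n(q) = g^n(q) \to g^+$ as $n \to +\infty$.
\end{proof}
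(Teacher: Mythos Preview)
Your proof is correct and follows essentially the same approach as the paper: set $g_n = g^n$, invoke Proposition~\ref{dy M1} to obtain $g^n(x_0)\to g^+$ and $g^{-n}(x_0)\to g^-$, and then apply the Key Lemma~\ref{key 1} with $p_1=g^+$, $p_2=g^-$. Your additional care in verifying that the convergence in Proposition~\ref{dy M1} matches the sense of Definition~\ref{conv 1} (via the uniform Morse gauge on the segments $[x_0,g^n(x_0)]$) is a welcome clarification, but the underlying argument is the same as the paper's.
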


\begin{proof}
Let $g_i=g^{i}$. By Proposition \ref{dy M1},  the sequence of points $g^{n}(x_0)$( resp. $g^{-n}(x_0)$) converges to $g^+$( resp. $g^-$).
Since $q\not=g^{-}$, by Lemma \ref{key 1} we are done.

\end{proof}

\begin{proof}[Proof of Corollary \ref{weak N-S}]
The weak North-South dynamics theorem for Morse isometries follows from Theorem \ref{con2 in M} and Corollary \ref{dy M2}.
\end{proof}

\subsection{Schottky groups}
Given a proper geodesic space $X$, we say two Morse isometries $g_1$ and $g_2$ of $X$ are $independent$ if their fixed points on the $\partial_MX$ are disjoint. 

\begin{defn}\label{w-stable 1} Let $X$ be a geodesic space. Let $Y$ be a subset in $X$. We say $Y$ is $weakly$ $stable$ if there exists a Morse gauge $N$ such that any two points $y_1, y_2\in Y$ can be connected by an $N$-Morse geodesic in $X$. 
\end{defn}
\begin{defn}\label{w-stable 2}
Let $X$ be a proper geodesic space and let $x_0\in X$ be a basepoint. Let $G$ be a subgroup of $Isom(X)$.
We say $G$ is $weakly$ $stable$ with respect to the action if there exists a Morse gauge $N$ such that a geodesic $[x_0, g(x_0)]$ is $N$-Morse for all $g\in G$.
\end{defn}

The notion does not depend on choices of geodesics or the choice of basepoint.
For a finitely generated $G$, consider the natural action on its Cayley graph for some finite generating set. It is not hard to see that $G$ is weakly stable with respect to this action if and only if $G$ is a hyperbolic group.

\begin{prop}\label{rk 2}
Let $g_1, g_2$ be two independent Morse isometries of $X$.
Assume that the group $\langle g_1, g_2\rangle$ is weakly stable.
Then for sufficiently large $M\in \bbN$, elements $g_1^M$ and $g_2^M$ generate a nonabelian free subgroup of $\langle g_1, g_2\rangle$ and there exists a Morse gauge $N$ such that every nontrivial element in $\langle g_1^M, g_2^M \rangle$ is $N$-Morse.
\end{prop}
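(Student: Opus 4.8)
The plan is to realise the classical ping--pong (Schottky) construction geometrically, through orbit paths in $X$; this will simultaneously give freeness of $\langle g_1^M,g_2^M\rangle$ and the uniform Morse bound. Fix a basepoint $y_0\in X$. Since $g_1,g_2$ are independent Morse isometries, the four poles $g_1^\pm,g_2^\pm\in\partial_MX$ are pairwise distinct; since $\langle g_1,g_2\rangle$ is weakly stable there is a Morse gauge $N_0$ with $[y_0,g(y_0)]$ being $N_0$-Morse for every $g\in\langle g_1,g_2\rangle$; and by Proposition \ref{dy M1} $g_i^{\pm m}(y_0)\to g_i^{\pm}$ as $m\to\infty$. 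Put $t_i:=g_i^M$. For a \emph{reduced} word $w=s_1\cdots s_\ell$ in $\{t_1^{\pm1},t_2^{\pm1}\}$ (reduced meaning $s_{k+1}\ne s_k^{-1}$), let $\sigma_w$ be the concatenation of the geodesics $[s_1\cdots s_{k-1}(y_0),\,s_1\cdots s_k(y_0)]$, $k=1,\dots,\ell$; each such edge is an isometric copy of some $[y_0,g_i^{\pm M}(y_0)]$, hence $N_0$-Morse and of length $\ge M\mu$, where $\mu>0$ comes from the Morse isometries $g_1,g_2$ (see the remark after Definition \ref{Morse}). I will show that for $M$ large, $\sigma_w$, and the bi--infinite periodic path $\sigma_{w^\infty}=\bigcup_{k\in\bbZ}w^k(\sigma_w)$ for cyclically reduced $w$, is a $(\lambda,\epsilon)$-quasi-geodesic which is $N$-Morse, with $\lambda,\epsilon,N$ depending only on $N_0$.

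The first step is a uniform bound on the turning at the breakpoints. For $a,b\in\{g_1^{\pm1},g_2^{\pm1}\}$ with $\lim a^m(y_0)\ne\lim b^m(y_0)$, I claim the Gromov products $(a^{M}(y_0)\mid b^{M}(y_0))_{y_0}$ are bounded by a constant $L=L(N_0)$ independent of $M$: the triangle $\triangle(a^M(y_0),y_0,b^M(y_0))$ has its two sides at $y_0$ $N_0$-Morse, hence is $\delta''_{N_0}$-thin by Lemma \ref{thin}; if the Gromov product were unbounded along a subsequence these sides would $\delta''_{N_0}$-fellow--travel for an unbounded initial length, and passing to limits (Corollary \ref{aa1}) we would get two $N_0$-Morse rays from $y_0$ at bounded distance from one another, forcing the two poles to coincide. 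Now for a reduced $w$, translating a breakpoint $q_k=s_1\cdots s_k(y_0)$ to $y_0$ turns the incident edges into $[s_k^{-1}(y_0),y_0]$ and $[y_0,s_{k+1}(y_0)]$; since $s_{k+1}\ne s_k^{-1}$ and the map $\{t_1^{\pm1},t_2^{\pm1}\}\to\{g_1^\pm,g_2^\pm\}$ sending a letter to its associated pole is a bijection, these edges point toward distinct poles, so meet at $y_0$ with Gromov product $\le L$ — this is exactly where reducedness prevents a full backtrack. The same holds at the periodic junctions of $\sigma_{w^\infty}$ when $w$ is cyclically reduced ($s_1\ne s_\ell^{-1}$), and replacing a general $w$ by a cyclic reduction changes its axes only by an isometry of $X$, so does not affect the uniformity of the final gauge.

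Next I invoke a local--to--global principle: a path built by concatenating $N_0$-Morse geodesic segments, each of length $\ge D$, consecutive ones meeting with Gromov product $\le L$, is — once $D$ is large compared with $L$ and $\delta''_{N_0}$ — a $(\lambda,\epsilon)$-quasi-geodesic with $\lambda,\epsilon$ depending only on $N_0$ and $L$; this is proved as Proposition 3.2 of \cite{alonso1991notes}, with hyperbolic thinness replaced by Lemma \ref{thin} applied to the triangles $\triangle(q_i,q_j,q_k)$ on orbit points, all of whose sides are $N_0$-Morse by weak stability and hence $\delta''_{N_0}$-thin. Choose $M$ with $M\mu\ge D$. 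That $\sigma_w$ is moreover uniformly Morse is then automatic: $[y_0,w(y_0)]$ is $N_0$-Morse by weak stability ($w\in\langle g_1,g_2\rangle$), so $\sigma_w$ and $[y_0,w(y_0)]$ lie at Hausdorff distance bounded in terms of $N_0,\lambda,\epsilon$ (Lemma 2.5(3) of \cite{CS2014}), whence any quasi-geodesic with endpoints on $\sigma_w$, after extending slightly to endpoints on $[y_0,w(y_0)]$, stays in a uniform neighbourhood of $\sigma_w$; the same applies to $\sigma_{w^\infty}$. Freeness follows: a nonempty reduced $w$ gives $d(y_0,w(y_0))\ge\lambda^{-1}\ell M\mu-\epsilon>0$ once $M$ is large, so $\langle t_1,t_2\rangle$ is free of rank $2$. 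For the uniform Morse property, for cyclically reduced nontrivial $w$ the path $\bigcup_{k}w^k[y_0,w(y_0)]$ lies at bounded Hausdorff distance from the $N$-Morse quasi-geodesic $\sigma_{w^\infty}$, hence is itself a uniform Morse quasi-geodesic; thus $w$ is a Morse isometry (Definition \ref{Morse}), and by Lemma \ref{char M} together with Remark \ref{rmk1} all of its axes are $N'$-Morse with $N'$ depending only on $N$, i.e. only on $N_0$.

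The main obstacle is the local--to--global lemma of the third paragraph: upgrading ``long, uniformly Morse pieces meeting at bounded Gromov product'' to a genuine uniform quasi-geodesic requires a careful thin--triangle induction along the lines of \cite{alonso1991notes}, after which one still has to verify the softer passage from ``uniform quasi-geodesic at bounded Hausdorff distance from an $N_0$-Morse geodesic'' to ``uniformly Morse''. A secondary point needing care is the uniformity, over all four poles and all exponents $M$, of the Gromov--product bound $L$, and the bookkeeping showing that reducedness of $w$ is precisely what rules out a backtrack at each breakpoint (and cyclic reducedness the same at the periodic junctions).
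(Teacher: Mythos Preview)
Your proposal is correct and reaches the conclusion by a route that is close to, but organized differently from, the paper's. Both proofs reduce to showing that for a cyclically reduced word $w$ the orbit path $\bigcup_k w^k[y_0,w(y_0)]$ is a Morse quasi-geodesic with constants depending only on $N_0$, and both then finish via Lemma~\ref{char M}. The difference lies in how the quasi-geodesic property is obtained. The paper constructs explicit ping-pong sets $U_i^{\pm}\subset X$ (orbit points that fellow-travel the rays $\gamma_{g_i}^{\pm}$ for a prescribed initial length) and re-enters the proof of the Key Lemma~\ref{key 2} to show that $g_i^{\pm M}(X_0\setminus U_i^{\mp})\subset U_i^{\pm}$; this gives freeness immediately by ping-pong, and a second appeal to the same mechanism gives the linear growth $d(x_0,w(x_0))>|w|\mu$, hence a quasi-geodesic. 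You instead bound the Gromov products at the breakpoints directly (distinct poles plus $\delta''_{N_0}$-thinness from Lemma~\ref{thin}), then run the local-to-global argument of Proposition~3.2 in \cite{alonso1991notes} on the orbit triangles, obtaining the quasi-geodesic constants and freeness in one stroke. Your route is a bit more self-contained---it does not need to revisit the proof of Lemma~\ref{key 2}---while the paper's stays within the dynamical framework of Section~6. One small point of notation: your bound $L$ on the Gromov products genuinely depends on the configuration of the four poles $g_i^{\pm}$ in $\partial_MX$, not only on $N_0$; you acknowledge this in your last paragraph, but writing $L=L(N_0)$ earlier is mildly misleading. This does not affect the argument, since only finitely many pole-pairs are involved.
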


\begin{proof}
Let $x_0$ be a basepoint. Assume that the geodesic $[x_0, g(x_0)]$ is $N_0$-Morse for any $g\in \langle g_1, g_2\rangle$.
By Proposition \ref{dy M1}, by passing to a subsequence we may assume that for $i=1, 2$, the sequence $[x_0, g_i^{\pm n}(x_0)]$ converges uniformly on compact sets to an $N_0$-Morse geodesic ray $\gamma_{g_i}^{\pm}$ such that $\gamma_{g_i}^{\pm}(0)=x_0, \gamma_{g_i}^{\pm}( \infty)=g_{i}^{\pm}$.
Given some constants $C, m>0$, let $U_{i}^{\pm}=\{g(x_0)\in X \mid$ $g$ is a nontrivial element of $G$ and there exists a geodesic segment $\alpha_g$ between $x_0$ and $g(x_0)$ such that $d(\gamma_{g_i}^{\pm}(t), \alpha_g(t))\le C$ for $t< m \}$.
Now choose $C=1+C'_0$, where $C'_0$ is the constant in the proof of Lemma \ref{key 2}. 
There exists $m$ such that the sets $U_{i}^{\pm}$ are nonempty. 
We can choose very large $m$ such that sets $U_{i}^{\pm}$ are disjoint. Let $X_0=\{x_0\}\cup U_{1}^{+}\cup U_{1}^{-}\cup U_{2}^{+}\cup U_{2}^{-}$.

Since the group $\langle g_1, g_2\rangle$ is weakly stable, we can follow the similar proof in Lemma \ref{key 2} to find $M>0$ such that $g_{i}^{\pm M}(X_0-U_{i}^{\mp})\subset U_{i}^{\pm}$ for each $i$. Thus, by the standard ping-pong argument, $\langle g_1^{M}, g_2^{M} \rangle$
is free.

Now we show that every nontrivial element of $\langle g_1^{M}, g_2^{M} \rangle$ is $N$-Morse for some $N$.
It is enough to show it in the case $w$ is a cyclically reduced word in $\langle g_1^{M}, g_2^{M} \rangle$ since every word is conjugate to a cyclically reduced word and the conjugate of an $N'$-Morse element is still $N'$-Morse.

Given $\mu>0$, following the proof of Lemma \ref{key 1} we can choose large $M$ such that, $d(x_0, g_{i}^{\pm M}(w'(x_0)))>d(x_0, w'(x_0))+\mu$ for any $w'(x_0)\in X_0-U_{i}^{\mp}$. 
Thus, for any cyclically reduced word $w$ in $\langle g_1^{M}, g_2^{M} \rangle$, we have $d(x_0, w(x_0))>|w|\mu$.
It follows that $d(x_0, w^{n}(x_0))>n|w|\mu$ for $n>0$. 
We conclude that the bi-infinte piecewise geodesic $\eta_{-\infty}^{+\infty}=\bigcup_{k\in \bbZ} w^{k}(\eta)$ is a quasi-geodesic where $\eta =[x_0, w(x_0)]$. This path is also Morse since $[x_i, x_j]$ is $N_0$-Morse. 
So the element $w$ is a Morse isometry.  Following the proof in Lemma \ref{char M}, there exists an $N_0$-Morse bi-infinite geodesic $\gamma_{w}$ and $w$ fixes $\gamma_{w}(\infty)$ and $\gamma_{w}(-\infty)$. So $w$ is $N$-Morse where $N$ depends only on $N_0$.

\end{proof}

The next proposition is true by an argument to the proof of Proposition \ref{rk 2}.
\begin{prop}\label{rk k}
Let $g_1, \ldots ,g_k$ be $k$ pairwise independent Morse isometries of $X$.
Assume that the group $\langle g_1, \ldots ,g_k\rangle$ is weakly stable.
Then for sufficiently large $M\in \bbN$, elements $g_1^M,\ldots, g_k^M$ generate a nonabelian free subgroup of $\langle g_1, \ldots ,g_k\rangle$ and there exists a Morse gauge $N$ such that every nontrivial element in $\langle g_1^M,\ldots, g_k^M\rangle$ is $N$-Morse.
\end{prop}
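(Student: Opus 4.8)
The plan is to run the same ping-pong scheme as in the proof of Proposition \ref{rk 2}, now with $2k$ players instead of four. First I would fix a basepoint $x_0$ and, using weak stability of $\langle g_1,\ldots,g_k\rangle$, a Morse gauge $N_0$ such that $[x_0,g(x_0)]$ is $N_0$-Morse for every $g\in\langle g_1,\ldots,g_k\rangle$. By Proposition \ref{dy M1}, after passing to a subsequence we may assume that for each $i$ the geodesics $[x_0,g_i^{\pm n}(x_0)]$ converge uniformly on compact sets to $N_0$-Morse rays $\gamma_{g_i}^{\pm}$ with $\gamma_{g_i}^{\pm}(0)=x_0$ and $\gamma_{g_i}^{\pm}(\infty)=g_i^{\pm}$. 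The hypothesis that $g_1,\ldots,g_k$ are pairwise independent says precisely that the $2k$ endpoints $g_1^{\pm},\ldots,g_k^{\pm}$ are pairwise distinct points of $\partial_MX$.

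Next I would set up the ping-pong sets exactly as in the proof of Proposition \ref{rk 2}. For constants $C,m>0$ let
$$U_i^{\pm}=\{\,g(x_0)\in X\mid g\neq e,\ \exists\ \text{geodesic}\ \alpha_g=[x_0,g(x_0)]\ \text{with}\ d(\gamma_{g_i}^{\pm}(t),\alpha_g(t))\le C\ \text{for all}\ t<m\,\},$$
with $C=1+C_0'$, where $C_0'$ is the constant in the proof of Lemma \ref{key 2}. Since the $2k$ rays $\gamma_{g_i}^{\pm}$ have pairwise distinct endpoints, they pairwise diverge, so for $m$ large enough the sets $U_1^{+},U_1^{-},\ldots,U_k^{+},U_k^{-}$ are nonempty and pairwise disjoint; put $X_0=\{x_0\}\cup\bigcup_{i=1}^{k}(U_i^{+}\cup U_i^{-})$. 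Imitating the argument in the proof of Lemma \ref{key 2} and using weak stability of $\langle g_1,\ldots,g_k\rangle$, I would then find $M>0$ so that $g_i^{\pm M}\bigl(X_0\setminus U_i^{\mp}\bigr)\subset U_i^{\pm}$ for every $i$. The standard ping-pong lemma for $k$ generators then yields that $g_1^{M},\ldots,g_k^{M}$ freely generate a (nonabelian, once $k\ge 2$) free subgroup of $\langle g_1,\ldots,g_k\rangle$.

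For the uniform Morse gauge I would follow the second half of the proof of Proposition \ref{rk 2} verbatim. It suffices to treat a cyclically reduced word $w$ in $g_1^{M},\ldots,g_k^{M}$, since every element is conjugate to such a word and conjugation changes the Morse gauge in a controlled way. Enlarging $M$ if necessary, the displacement estimate from the proof of Lemma \ref{key 1} provides $\mu>0$ with $d(x_0,g_i^{\pm M}(w'(x_0)))>d(x_0,w'(x_0))+\mu$ whenever $w'(x_0)\in X_0\setminus U_i^{\mp}$, hence $d(x_0,w(x_0))>|w|\mu$ and $d(x_0,w^{n}(x_0))>n|w|\mu$ for all $n>0$. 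Therefore the bi-infinite concatenation $\eta_{-\infty}^{+\infty}=\bigcup_{j\in\bbZ}w^{j}(\eta)$ with $\eta=[x_0,w(x_0)]$ is a quasi-geodesic, and it is Morse because each $[x_i,x_j]$ is $N_0$-Morse; thus $w$ is a Morse isometry. Running the construction of Lemma \ref{char M} then produces an $N_0$-Morse bi-infinite axis for $w$, which shows $w$ is $N$-Morse with $N$ depending only on $N_0$.

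The main obstacle is the same one already present in the case $k=2$: checking that a single exponent $M$ can be chosen to make all $2k$ inclusions $g_i^{\pm M}(X_0\setminus U_i^{\mp})\subset U_i^{\pm}$ hold simultaneously, and simultaneously large enough to force the displacement inequality $d(x_0,w(x_0))>|w|\mu$. This is no harder than the $k=2$ case once one verifies that the $U_i^{\pm}$ are pairwise disjoint — which is exactly what pairwise independence buys — but it does require applying the estimates behind Lemmas \ref{key 1} and \ref{key 2} uniformly over all $2k$ directions at once, so the relevant parts of those proofs should be invoked in a form that is uniform in the chosen direction. Everything else is a routine transcription of the argument for Proposition \ref{rk 2}.
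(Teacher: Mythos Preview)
Your proposal is correct and follows exactly the approach the paper indicates: the paper's ``proof'' of Proposition~\ref{rk k} consists of the single sentence that it follows by the same argument as Proposition~\ref{rk 2}, and you have faithfully transcribed that argument from two generators to $k$ generators, using pairwise independence to guarantee disjointness of the $2k$ ping-pong sets. If anything, your write-up is more detailed than what the paper itself provides.
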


\newcommand{\etalchar}[1]{$^{#1}$}


\end{document}